\def\Corresponding author{$^{*}$\protect\footnotetext{$^{*}$ C\lowercase{orresponding author.}}}
\def\authorsaddresses#1{\dedicatory{#1}}
\newtheorem{thm}{Theorem}[section]
\theoremstyle {definition}
\newtheorem{cor}[thm]{Corollary}
\newtheorem{prop}[thm]{Proposition}
\newtheorem{lem}[thm]{Lemma}
\newtheorem{eg}[thm]{Example}
\numberwithin{equation}{section}
\begin{document}
\setcounter{page}{1}

\title[The annihilating-submodule graph of modules]{The annihilating-submodule graph of modules over commutative rings}

\author[H. Ansari-Toroghy and S. Habibi]{H. Ansari-Toroghy$^1$\Corresponding author and S. Habibi$^2$}

% ----------------------------------------------------------------
\authorsaddresses{$^1$ Department of pure Mathematics\\ Faculty of mathematical Sciences\\ University of Guilan,
P. O. Box 41335-19141, Rasht, Iran\\
e-mail: ansari@guilan.ac.ir\\
\vspace{0.5cm} $^2$ Department of pure Mathematics\\ Faculty of
mathematical Sciences\\ University of Guilan,
P. O. Box 41335-19141, Rasht, Iran\\
e-mail: sh.habibi@phd.guilan.ac.ir} \subjclass[2010]{primary
05C75, secondary 13C13} \keywords{Commutative rings,
annihilating-submodule, graph, coloring of graphs.}
% ----------------------------------------------------------------
\begin{abstract}
Let $M$ be a module over a commutative ring $R$. In this paper, we
continue our study of annihilating-submodule graph $AG(M)$ which
was introduced in (The Zariski topology-graph of modules over
commutative rings, Comm. Algebra., 42 (2014), 3283--3296). $AG(M)$
is a (undirected) graph in which a nonzero submodule $N$ of $M$ is
a vertex if and only if there exists a nonzero proper submodule
$K$ of $M$ such that $NK=(0)$, where $NK$, the product of $N$ and
$K$, is defined by $(N:M)(K:M)M$ and two distinct vertices $N$ and
$K$ are adjacent if and only if $NK=(0)$. We obtain useful
characterizations for those modules $M$ for which either $AG(M)$
is a complete (or star) graph or every vertex of $AG(M)$ is a
prime (or maximal) submodule of $M$. Moreover, we study coloring
of annihilating-submodule graphs.
\end{abstract}
\maketitle
% ----------------------------------------------------------------
\section{Introduction}
 Throughout this paper $R$ is a commutative ring with a non-zero
identity and $M$ is a unital $R$-module. By $N\leq M$ (resp. $N<
M$) we mean that $N$ is a submodule (resp. proper submodule) of
$M$. Let $\Lambda(M)$ and $\Lambda(M)^{*}$ be the set of proper
submodules of $M$ and nonzero proper submodules of $M$,
respectively.

Define $(N:_{R}M)$ or simply $(N:M)=\{r\in R|$ $rM\subseteq N\}$
for any $N\leq M$. We denote $((0):M)$ by $Ann_{R}(M)$ or simply
$Ann(M)$. $M$ is said to be faithful if $Ann(M)=(0)$.

Let $N, K\leq M$. Then the product of $N$ and $K$, denoted by
$NK$, is defined by $(N:M)(K:M)M$ (see \cite{af07}).

There are many papers on assigning graphs to rings or modules
(see, for example, \cite{al99, ah14, b88, br11}). The
annihilating-ideal graph $AG(R)$, was introduced and studied in
\cite{br11}. $AG(R)$ is a graph whose vertices are ideals of $R$
with nonzero annihilators and in which two vertices $I$ and $J$
are adjacent if and only if $IJ=(0)$.

In \cite{ah14}, we generalized the above idea to submodules of $M$
and defined the (undirected) graph $AG(M)$, called \textit {the
annihilating-submodule graph}, with vertices\\ $V(AG(M))$= $\{N
\leq M |$ there exists $(0)\neq K<M$ with $NK=(0)$\}. In this
graph, distinct vertices $N,L \in V(AG(M))$ are adjacent if and
only if $NL=(0)$. Let $AG(M)^{*}$ be the subgraph of $AG(M)$ with
vertices $V(AG(M)^{*})=\{ N<M$ with $(N:M)\neq Ann(M)|$ there
exists a submodule $K<M$ with $(K:M)\neq Ann(M)$ and $NK=(0)\}$.
Note that $M$ is a vertex of $AG(M)$ if and only if there exists a
nonzero proper submodule $N$ of $M$ with $(N:M)=Ann(M)$ if and
only if every nonzero submodule of $M$ is a vertex of $AG(M)$.

A prime submodule of $M$ is a submodule $P\neq M$ such that
whenever $re\in P$ for some
  $r\in R$ and $e \in M$, we have $r\in (P:M)$ or $e\in P$ \cite{lu84, mm97}.

The prime spectrum (or simply, the spectrum) of $M$ is the set of
all prime submodules of $M$ and denoted by $Spec(M)$. Also,
$Max(M)$ will denote the set of all maximal submodules of $M$.

The prime radical $rad_{M}(N)$ is defined to be the intersection
of all prime submodules of $M$ containing $N$, and in case $N$ is
not contained in any prime submodule, $rad_{M}(N)$ is defined to
be $M$ \cite{lu84}.

Let $Z(R)$ and $Nil(R)$ be the set of zero-divisors and nilpotent
elements of $R$, respectively. Let $Z_{R}(M)$ or simply $Z(M)$ be
the set $\{r\in R|$ $rm=0$ for some $0\neq m\in M \}$.

 Let $N$ and $K$ be submodules of $M$. Then the product of $N$ and $K$
 is defined by $(N:M)(K:M)M$ and denoted
 by $NK$ (see \cite{af07}).

A clique of a graph is a maximal complete subgraph and the number
of vertices in the largest clique of graph $G$, denoted by
$cl(G)$, is called the clique number of $G$. Let $\chi(G)$ denote
the chromatic number of the graph $G$, that is, the minimal number
of colors needed to color the vertices of $G$ so that no two
adjacent vertices have the same color. Obviously $\chi(G)\geq
cl(G)$.

In section 2, we continue all modules $M$ for which $AG(M)$ is a
complete (resp. star) graph or every vertex of $AG(M)$ is a prime
(or maximal) submodule (see Theorems \ref{t2.14}, \ref{t2.15}, and
\ref{t2.17}). In section 3, we study the coloring of the
annihilating-submodule graph of modules. At first, among other
results, we give a characterization of $\chi(AG(M)^{*})=2$ (see
Theorem \ref{t3.2}). It is shown that for a semiprime module $M$,
the following conditions are equivalent. (1) $\chi(AG(M)^{*})$ is
finite. (2) $cl(AG(M)^{*})$ is finite. (3) $AG(M)^{*}$ does not
have an infinite clique (see Corollary \ref{c3.8}). Also, it is
shown that for a faithful module $M$ with $rad_{M}(0)=(0)$, the
following conditions are equivalent. (1) $\chi(AG(M)^{*})$ is
finite. (2) $cl(AG(M)^{*})$ is finite. (3) $AG(M)^{*}$ does not
have an infinite clique. (4) $R$ has a finite number of prime
ideals (see Proposition \ref{p3.11}).

Let us introduce some graphical notions and denotations that are
used in what follows:
A graph $G$ is an ordered triple $(V(G), E(G), \psi_G )$ consisting of a
nonempty set of vertices,
 $V(G)$, a set $E(G)$ of edges, and an incident function $\psi_G$ that associates an
 unordered pair
 of distinct vertices with each edge. The edge $e$ joins $x$ and $y$ if $\psi_G(e)=\{x, y\}$, and we
 say $x$ and $y$ are adjacent. A path in graph $G$ is a finite sequence of vertices $\{x_0,
x_1,\ldots ,x_n\}$, where $x_{i-1}$ and $x_i$ are adjacent for
each $1\leq i\leq n$ and we denote $x_{i-1} - x_i$ for existing an
edge between
 $x_{i-1}$ and $x_i$.

A graph $H$ is a subgraph of $G$ if $V(H)\subset V(G)$,
$E(H)\subseteq E(G)$ and $\psi_H$ is the restriction of $\psi_G$
to $E(H)$. A bipartite graph is a graph whose vertices can be
divided into two disjoint sets $U$ and $V$ such that every edge
connects a vertex in $U$ to one in $V$; that is, $U$ and $V$ are
each independent sets and complete bipartite graph on $n$ and $m$
vertices, denoted by $K_{n, m}$, where $V$ and $U$ are of size $n$
and $m$, respectively, and $E(G)$ connects every vertex in $V$
with all vertices in $U$. Note that a graph $K_{1, m}$ is called a
star graph and the vertex in the singleton partition is called the
center of the graph.
 (see \cite{r05}).

\section{The Annihilating-submodule graph}

An ideal $I\leq R$ is said to be nil if $I$ consist of nilpotent
elements; $I$ is said to be nilpotent if $I^{n}=(0)$ for some
natural number $n$.

\begin{prop}\label{p2.1} Suppose that $e$ is an idempotent element of
$R$. We have the following statements.

\begin {itemize}
\item [(a)] $R=R_{1}\oplus R_{2}$, where $R_{1}=eR$ and
$R_{2}=(1-e)R$. \item [(b)] $M=M_{1}\oplus M_{2}$, where
$M_{1}=eM$ and $M_{2}=(1-e)M$. \item [(c)] For every submodule $N$
of $M$, $N=N_{1}\times N_{2}$ such that $N_{1}$ is an
$R_{1}$-submodule $M_{1}$, $N_{2}$ is an $R_{2}$-submodule
$M_{2}$, and $(N:_{R}M)=(N_{1}:_{R_{1}}M_{1})\times
(N_{2}:_{R_{2}}M_{2})$.  \item [(d)] For submodules $N$ and $K$ of
$M$, $NK=N_{1}K_{1} \times N_{2}K_{2}$ such that $N=N_{1}\times
N_{2}$ and $K=K_{1}\times K_{2}$.
\end{itemize}

\end{prop}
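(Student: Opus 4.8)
The plan is to prove the four parts in order, since each builds naturally on the previous. The whole proposition rests on the ring-theoretic fact that an idempotent $e$ induces a direct product decomposition, so I would start by establishing (a) and then transport everything over to $M$.

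\medskip

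\noindent\textbf{Parts (a) and (b).} First I would verify the Peirce decomposition $R = eR \oplus (1-e)R$. Since $e$ is idempotent, $(1-e)$ is idempotent as well and $e(1-e)=0$, so $eR$ and $(1-e)R$ are ideals with $eR \cap (1-e)R = (0)$ (any element in the intersection is killed by both $e$ and $1-e$, hence is $r = er + (1-e)r = 0$), and any $r \in R$ equals $er + (1-e)r$. The product structure is forced because $R_1 R_2 = (0)$, making $R_1 = eR$ and $R_2 = (1-e)R$ rings with identities $e$ and $1-e$. For (b), the identical argument applied to $M$ gives $M = eM \oplus (1-e)M$ as abelian groups, and since $R_1$ acts trivially on $M_2$ (because $eM_2 = e(1-e)M = 0$) and vice versa, $M_1 = eM$ is naturally an $R_1$-module and $M_2 = (1-e)M$ an $R_2$-module, so $M = M_1 \times M_2$ as an $R = R_1 \times R_2$-module.

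\medskip

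\noindent\textbf{Part (c).} Given a submodule $N \leq M$, I would set $N_1 = eN$ and $N_2 = (1-e)N$ and show $N = N_1 \times N_2$. One containment is the observation $N \subseteq eN + (1-e)N$ (each $n = en + (1-e)n$); the reverse uses that $eN, (1-e)N \subseteq N$ since $N$ is a submodule, together with the fact that $eN \subseteq M_1$ and $(1-e)N \subseteq M_2$ lie in complementary summands so their sum is direct. Then $N_1 = eN$ is an $R_1$-submodule of $M_1$ and $N_2 = (1-e)N$ an $R_2$-submodule of $M_2$. The colon formula $(N:_R M) = (N_1:_{R_1}M_1) \times (N_2:_{R_2}M_2)$ then follows by decomposing $r = er + (1-e)r$: one checks $rM \subseteq N$ holds if and only if $(er)M_1 \subseteq N_1$ and $((1-e)r)M_2 \subseteq N_2$, using that the action respects the direct-product splitting.

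\medskip

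\noindent\textbf{Part (d).} Finally, using the definition $NK = (N:M)(K:M)M$ together with the colon formula from (c) and the decomposition $M = M_1 \times M_2$, I would compute
\[
NK = \bigl((N_1:M_1)\times(N_2:M_2)\bigr)\bigl((K_1:M_1)\times(K_2:M_2)\bigr)(M_1\times M_2),
\]
and since multiplication in $R = R_1 \times R_2$ is componentwise with $R_1 R_2 = (0)$, this factors as $(N_1:M_1)(K_1:M_1)M_1 \times (N_2:M_2)(K_2:M_2)M_2 = N_1 K_1 \times N_2 K_2$. The main obstacle I anticipate is purely bookkeeping rather than conceptual: one must consistently check that all the identifications in (c), especially the colon-ideal computation, are compatible with viewing the same submodule simultaneously as an $R$-module and as an external product of an $R_1$-module with an $R_2$-module, and that the products $(N_i:M_i)$ are taken inside $R_i$ with identity $e$ (resp. $1-e$) rather than inside $R$. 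Once (c) is stated carefully, (d) is a routine componentwise calculation.
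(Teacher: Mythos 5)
Your proof is correct and is exactly the standard Peirce-decomposition argument the authors have in mind; the paper itself offers no details (its proof reads only ``This is clear''), so your write-up simply supplies the routine verification that the authors omit. All four steps check out, including the two points that actually need care (directness of $N = eN \oplus (1-e)N$ and the componentwise colon-ideal identity), so there is nothing to correct.
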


\begin{proof}
This is clear.
\end{proof}

We need the following lemmas.

\begin{lem}\label{l2.2} (See \cite[Proposition 7.6]{af74}.)
Let $R_{1}, R_{2}, \ldots , R_{n}$ be non-zero ideals of $R$. Then
the following statements are equivalent:

\begin{itemize}
\item [(a)] $_{R}R= R_{1} \oplus \ldots \oplus R_{n}$; \item [(b)]
As an abelian group $R$ is the direct sum of $ R_{1}, \ldots ,
R_{n}$; \item [(c)] There exist pairwise orthogonal central
idempotents $e_{1},\ldots, e_{n}$ with $1=e_{1}+ \ldots +e_{n}$,
and $R_{i}=Re_{i}$, $i=1, \ldots ,n$.
\end{itemize}

\end{lem}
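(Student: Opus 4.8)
The plan is to prove the three conditions mutually equivalent by showing that (a) and (b) are literally the same statement, and then closing the cycle (a) $\Rightarrow$ (c) $\Rightarrow$ (a). Since the result is quoted from \cite{af74}, in practice I would simply defer to that reference; but a self-contained argument is short, so here is how I would run it.

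First I would dispose of (a) $\Leftrightarrow$ (b). Each $R_i$ is an ideal, hence in particular an $R$-submodule of $_R R$. Therefore the $R$-module structure on $R_1 + \cdots + R_n$ is inherited automatically, and the assertion that the sum is internal and direct is a statement about the underlying additive groups. Thus a decomposition of $R$ as a direct sum of the $R_i$ as abelian groups is the very same thing as a decomposition as left $R$-modules, and this step is purely formal.

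Next I would prove (a) $\Rightarrow$ (c). Since $1 \in R = R_1 \oplus \cdots \oplus R_n$, I would write $1 = e_1 + \cdots + e_n$ with $e_i \in R_i$, this expression being unique. For any $r \in R_j$ the product $r e_i$ lies in $R_i$ because $R_i$ is an ideal, and the identity $r = r\cdot 1 = \sum_i r e_i$ compared with the unique decomposition of $r \in R_j$ forces $r e_i = 0$ for $i \neq j$ and $r e_j = r$. Taking $r = e_j$ yields $e_j^2 = e_j$ and $e_j e_i = 0$ for $i \neq j$, so the $e_i$ are pairwise orthogonal idempotents with $\sum_i e_i = 1$. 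The same relations give $R e_i \subseteq R_i$, while for $r \in R_i$ one has $r = r e_i \in R e_i$; hence $R_i = R e_i$. Centrality of the $e_i$ is automatic because $R$ is commutative (in the general setting of \cite{af74} one runs the symmetric left-multiplication argument, using that each $R_i$ is two-sided). For (c) $\Rightarrow$ (a), given pairwise orthogonal idempotents $e_i$ with $\sum_i e_i = 1$ and $R_i = R e_i$, each $R_i$ is an ideal and $r = \sum_i r e_i$ shows $R = \sum_i R_i$; and if $x \in R_i \cap \sum_{j\neq i} R_j$, writing $x = s e_i = \sum_{j\neq i} t_j e_j$ and multiplying by $e_i$ gives $x = x e_i = s e_i = x$ from the first expression and $x e_i = \sum_{j\neq i} t_j e_j e_i = 0$ from the second, so $x = 0$ and the sum is direct.

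The only genuinely delicate point is the bookkeeping in (a) $\Rightarrow$ (c): the trick is that multiplying the decomposition of $1$ by a component element and invoking uniqueness produces idempotence and pairwise orthogonality simultaneously, rather than as separate computations. Everything else is formal, so in the commutative case this is routine and citing \cite[Proposition 7.6]{af74} is the most economical route.
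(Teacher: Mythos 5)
Your argument is correct and complete: the identification of (a) with (b) via the observation that each $R_i$ is already an $R$-submodule, the extraction of the orthogonal idempotents from the decomposition of $1$ using uniqueness of components, and the converse multiplication-by-$e_i$ computation are all sound, and centrality is indeed automatic here since $R$ is commutative. The paper itself supplies no proof for this lemma, deferring entirely to \cite[Proposition 7.6]{af74}, and your self-contained argument is exactly the standard one given there, so there is nothing to reconcile.
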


\begin{lem}\label{l2.3} (See \cite[Theorem 21.28]{l91}.)
Let $I$ be a nil ideal in $R$ and $u\in R$ be such that
 $u+I$ is an idempotent in $R/I$. Then there exists an idempotent
 $e$ in $uR$ such that $e-u\in I$.
\end{lem}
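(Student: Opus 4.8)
The plan is to prove this classical idempotent-lifting statement by an explicit polynomial construction. First I would set $a=u^{2}-u$; the hypothesis that $u+I$ is idempotent in $R/I$ says precisely that $a\in I$, and since $I$ is nil there is an integer $n\geq 1$ with $a^{n}=0$. The key observation is that $u(1-u)=u-u^{2}=-a$, so $\bigl(u(1-u)\bigr)^{n}=(-1)^{n}a^{n}=0$; thus any element carrying both a factor $u^{n}$ and a factor $(1-u)^{n}$ vanishes.

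Next I would expand $1=(u+(1-u))^{2n-1}=\sum_{k=0}^{2n-1}\binom{2n-1}{k}u^{k}(1-u)^{2n-1-k}$ and split the sum at the midpoint, defining
\[
e=\sum_{k=n}^{2n-1}\binom{2n-1}{k}u^{k}(1-u)^{2n-1-k},\qquad f=\sum_{k=0}^{n-1}\binom{2n-1}{k}u^{k}(1-u)^{2n-1-k},
\]
so that $e+f=1$. Every term of $e$ contains a factor $u^{n}$, and every term of $f$ contains a factor $(1-u)^{n}$ (because $2n-1-k\geq n$ when $k\leq n-1$); hence every product of a summand of $e$ with a summand of $f$ is divisible by $\bigl(u(1-u)\bigr)^{n}=0$, giving $ef=0$. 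From $e+f=1$ I then get $e=e(e+f)=e^{2}+ef=e^{2}$, so $e$ is idempotent. Since each summand of $e$ has $k\geq n\geq 1$ and $R$ is commutative, each summand lies in $uR$, whence $e\in uR$.

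Finally I would check $e-u\in I$ by reducing modulo $I$. In $R/I$ the elements $u$ and $1-u$ are orthogonal idempotents (their product is $-a\equiv 0$), so $u^{k}\equiv u$ and $(1-u)^{j}\equiv 1-u$ for all exponents $\geq 1$, while any mixed product $u^{k}(1-u)^{2n-1-k}$ with both $k\geq 1$ and $2n-1-k\geq 1$ reduces to $0$. In the sum defining $e$ the only surviving term mod $I$ is the one with $k=2n-1$, namely $u^{2n-1}\equiv u$; therefore $e\equiv u\pmod I$, i.e. $e-u\in I$, as required.

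I expect the only genuinely delicate point to be the verification that the cross terms vanish: one must organize the bookkeeping so that each product of a high-power summand and a low-power summand is seen to contain the factor $\bigl(u(1-u)\bigr)^{n}$, which is where the choice of exponent $2n-1$ (ensuring the two halves overlap sufficiently) and the nilpotency index $n$ are used together. Everything else — the idempotence, the membership in $uR$, and the congruence modulo $I$ — then follows routinely once this divisibility is in place.
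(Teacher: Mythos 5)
Your proof is correct. Note that the paper does not actually prove this lemma: it is stated with a citation to Lam's \emph{A First Course in Noncommutative Rings}, Theorem 21.28, and no argument is given in the text. What you have written is a complete, self-contained proof of the cited result, and it is the standard binomial-splitting argument for lifting idempotents modulo a nil ideal (essentially the textbook proof of the result being cited). The details all check out: with $a=u^{2}-u\in I$ and $a^{n}=0$ you get $u^{n}(1-u)^{n}=(-1)^{n}a^{n}=0$; a cross term of $ef$ is a constant times $u^{k+j}(1-u)^{(2n-1-k)+(2n-1-j)}$ with $k\geq n$ and $j\leq n-1$, so it carries both $u^{n}$ and $(1-u)^{n}$ and vanishes, giving $ef=0$ and hence $e=e(e+f)=e^{2}$; each summand of $e$ has $k\geq n\geq 1$ so $e\in uR$ by commutativity; and modulo $I$ every term of $e$ except the $k=2n-1$ term contains the factor $u(1-u)\equiv 0$, leaving $e\equiv u^{2n-1}\equiv u$. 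The edge case $n=1$ (where $u$ is already idempotent and $e=u$) is also handled correctly by your formulas. The only cosmetic remark is that commutativity of $R$ (a standing assumption in this paper) is what lets you regroup the factors $u^{k}$ and $(1-u)^{j}$ freely; in the noncommutative setting of Lam's theorem one needs the observation that $u$ commutes with $1-u$, which your argument implicitly uses and which holds in any ring.
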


\begin{lem}\label{l2.4}
Let $N$ be a minimal submodule of $M$ and let $Ann(M)$ be a nil
ideal. Then we have $N^{2}=(0)$ or $N=eM$ for some idempotent
$e\in R$.
\end{lem}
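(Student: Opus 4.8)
The plan is to split along the dichotomy of the conclusion: either dispose of the case $N^2=(0)$ or manufacture the idempotent. Write $I=(N:M)$, so that by definition of the product $N^2=(N:M)(N:M)M=I^2M$. Throughout I would exploit the minimality of $N$ in the form: any submodule of $M$ contained in $N$ is either $(0)$ or $N$.

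First I would assume $N^2\neq(0)$ and aim to prove $N=eM$. Since $IM\subseteq N$ by the definition of $(N:M)$, minimality gives $IM=(0)$ or $IM=N$; but $IM=(0)$ would force $N^2=I^2M=I(IM)=(0)$, contradicting the assumption, so $IM=N$. Running the same argument once more on $IN=I(IM)=I^2M\subseteq N$ shows $IN=(0)$ or $IN=N$, and $IN=(0)$ again yields $N^2=(0)$; hence $IN=N$.

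Now the crucial observation is that $N$, being a minimal (hence simple) submodule, is cyclic and therefore finitely generated. Applying the determinant trick (Nakayama's lemma) to the equality $IN=N$ produces an element $a\in I$ acting as the identity on $N$, i.e. $an=n$ for all $n\in N$. Because $a\in(N:M)$ we have $aM\subseteq N$, so for each $m\in M$ the element $am$ lies in $N$ and is fixed by $a$, giving $a^2m=am$; thus $a^2-a\in Ann(M)$, meaning that the image of $a$ is idempotent in $R/Ann(M)$. Finally I would invoke the hypothesis that $Ann(M)$ is nil: Lemma \ref{l2.3} furnishes an idempotent $e\in aR$ with $e-a\in Ann(M)$. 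Since $(e-a)M=(0)$ we get $eM=aM$, and since $a$ fixes $N$ we have $N=aN\subseteq aM\subseteq N$, whence $aM=N$ and therefore $N=eM$.

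The main obstacle is this middle step: turning the self-similarity $IN=N$ into an element that behaves like an identity on $N$ and is idempotent modulo $Ann(M)$. This is precisely where the two hidden ingredients enter — finite generation of the simple module $N$ (to apply the determinant trick) and the nil hypothesis on $Ann(M)$ (to lift the resulting idempotent back to $R$ via Lemma \ref{l2.3}). The rest of the argument is bookkeeping with the minimality dichotomy.
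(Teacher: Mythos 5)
Your proof is correct and follows essentially the same route as the paper's: assume $N^{2}\neq(0)$, produce an element $a\in(N:M)$ with $aM=N$ and $a^{2}-a\in Ann(M)$, and lift to an idempotent via Lemma \ref{l2.3}. The only difference is cosmetic: where you invoke the determinant trick on $IN=N$ (legitimate, since a minimal submodule is simple and hence cyclic), the paper runs the one-generator case of that argument by hand, choosing $m\in N$ with $(N:M)m=N$, writing $m=um$, and using minimality of $N$ to get $(u-1)N=(0)$.
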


\begin{proof}
Assume $N^{2}\neq (0)$. Since $N^{2}\neq (0)$ and $N$ is a minimal
submodule of $M$, we have $(N:M)m\neq (0)$ for some $m\in N$ so
that $(N:M)m=N$. Choose $u\in (N:M)$ such that $m=um$. So $N=uM$.
Since $m\in ((0):_{N}u-1)$, $N= ((0):_{N}u-1)$ and hence
$u(u-1)M=(0)$. Thus $u(u-1)\in Ann(M)$. By Lemma \ref{l2.3}, there
is an idempotent $e$ in $R$ with $e-u\in Ann(M)$. So $(e-u)M=(0)$.
It is clear that $eM=uM$. Hence $N=eM$.
\end{proof}

\begin{thm}\label{t2.5} Let $Ann(M)$ be a nil ideal.
There exists a vertex of $AG(M)$ which is adjacent to every other
vertex if and only if $M=eM\oplus (1-e)M$, where $eM$ is a simple
module and $(1-e)M$ is a prime module for some idempotent $e\in R$
or $Z(M)=Ann((N:M)M)$, where $N$ is a nonzero proper submodule of
$M$ or $M$ is a vertex of $AG(M)$.
\end{thm}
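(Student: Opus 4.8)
The plan is to prove both implications through the single identity $NK=(N:M)(K:M)M$, which I first reformulate: a vertex $N$ is adjacent to a vertex $K$ if and only if $(K:M)\subseteq Ann((N:M)M)$. I record two facts used throughout. First, if $M$ is a vertex of $AG(M)$ then there is a nonzero proper $N$ with $(N:M)=Ann(M)$, and then $NK=(N:M)(K:M)M\subseteq Ann(M)M=(0)$ for every submodule $K$, so $N$ is adjacent to all other vertices; this settles the third alternative (that $M$ is a vertex) in both directions. Second, for $r\in Z(M)$ with $rM\neq(0)$ a direct computation gives $(rM)(0:_M r)=(0)$ (using $aM\subseteq rM$ and $rbM=(0)$ for $a\in(rM:M)$ and $b\in((0:_M r):M)$), and since $(0:_M r)$ is a nonzero proper submodule, $rM$ is a vertex whose defining colon ideal contains $r$.

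For the forward direction assume $V$ is a vertex adjacent to every other vertex and that $M$ is not a vertex, so I must reach the first or second alternative. Put $I=Ann((V:M)M)$; since $M$ is not a vertex we have $(V:M)M\neq(0)$, whence $I\subseteq Z(M)$, and domination says every vertex $K\neq V$ satisfies $(K:M)\subseteq I$. I then split on whether $VV=(0)$. If $VV=(0)$, then $V$ itself satisfies $(V:M)\subseteq I$, and feeding the second fact above ($rM$ a vertex containing $r$ in its colon ideal, for each $r\in Z(M)$) into domination yields $Z(M)\subseteq I$; with $I\subseteq Z(M)$ this gives $Z(M)=Ann((V:M)M)$, the second alternative with $N=V$. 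If $VV\neq(0)$, the same argument still forces $Z(M)\subseteq I$ unless some $r\in Z(M)$ has $rM=V$ and $r\notin I$; in that case one checks $(V:M)M=V$, so $I=Ann(V)$ and $r^2M\neq(0)$. Here I aim to produce an idempotent: I argue that $V$ contains a minimal submodule $N$ with $N^2\neq(0)$, invoke Lemma \ref{l2.4} (this is exactly where $Ann(M)$ nil, through Lemma \ref{l2.3}, is needed) to write $N=eM$, and apply Proposition \ref{p2.1} to split $M=eM\oplus(1-e)M$ with $eM$ simple. A final use of domination rules out nonzero products inside $(1-e)M$, forcing it to be prime, which is the first alternative.

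For the converse I exhibit a dominating vertex under each alternative. If $M$ is a vertex, the submodule $N$ of the first fact dominates. If $Z(M)=Ann((N:M)M)$, I claim $N$ dominates: for any vertex $K$ take the witnessing nonzero proper $K'$ with $KK'=(0)$; if $(K':M)M\neq(0)$ then any nonzero element of it is annihilated by $(K:M)$, so $(K:M)\subseteq Z(M)=Ann((N:M)M)$ and hence $NK=(K:M)(N:M)M=(0)$, while if $(K':M)M=(0)$ then $M$ is a vertex and we revert to the third alternative; one must also check separately that $N$ is itself a vertex. If $M=eM\oplus(1-e)M$ with $eM$ simple and $(1-e)M$ prime, I use Proposition \ref{p2.1}(d) to compute products componentwise: writing $eM=M_1\times(0)$ and $K=K_1\times K_2$ gives $(eM)K=(M_1K_1)\times(0)$, the second factor vanishing because $Ann((1-e)M)\cdot(1-e)M=(0)$, and primeness of $(1-e)M$ ensures a vertex with nonzero first component can arise only when $M$ is itself a vertex, so outside that case $eM$ is adjacent to every other vertex.

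The main obstacle is the step in the forward direction where $VV\neq(0)$ and the second alternative fails: extracting from $V$ a minimal submodule with nonzero square so that Lemma \ref{l2.4} applies, and then confirming that the dominating property of $V$ genuinely forces the complementary summand $(1-e)M$ to be prime rather than merely prime-like. Controlling the descending behaviour of the submodules $r^kM$, where the nilness of $Ann(M)$ must be leveraged, is the delicate point, since no chain condition on $M$ is assumed.
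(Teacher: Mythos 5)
Your overall architecture (the two ``facts'' about dominating vertices, the dichotomy on whether $VV=(0)$, and the eventual appeal to Lemma \ref{l2.4} and Proposition \ref{p2.1}) tracks the paper's proof, and the case $VV=(0)$ and the converse are essentially what the paper does. But the step you yourself flag as the main obstacle is a genuine gap, and the route you propose for it --- extracting from $V$ a minimal submodule with nonzero square by controlling the chains $r^{k}M$ --- cannot work. Without a chain condition $V$ need not contain a minimal submodule at all; worse, a minimal submodule $N$ with $N^{2}\neq(0)$ \emph{properly} contained in $V$ cannot exist in your situation: such an $N$ is a vertex distinct from $V$ (if $L$ witnesses that $V$ is a vertex, then $NL\subseteq VL=(0)$), so domination forces $VN=(0)$; but writing $N=eM=M_{1}\times(0)$ via Lemma \ref{l2.4} and Proposition \ref{p2.1} gives $V=M_{1}\times V_{2}$, whence the first component of $VN$ is $R_{1}M_{1}=M_{1}\neq(0)$, a contradiction. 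So the object you are hunting for can only be $V$ itself.

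The missing argument is that $V$ is already minimal, and it falls out of domination plus the assumed failure of the second alternative. Let $(0)\neq K\subsetneq V$. Since $(K:M)\subseteq(V:M)$, any witness $L$ of $V$ gives $KL\subseteq VL=(0)$, so $K$ is a vertex distinct from $V$ and hence $VK=(0)$. Now run your own second fact against this $K$: for $r\in Z(M)$, either $rM=(0)$, or $rM$ is a vertex; if $rM\neq V$ then domination gives $r(V:M)M=(0)$ and a fortiori $r(K:M)M=(0)$, while if $rM=V$ then $r\in(V:M)$ and $r(K:M)M\subseteq(V:M)(K:M)M=VK=(0)$. Thus $Z(M)\subseteq Ann((K:M)M)$; since $M$ is not a vertex we have $(K:M)\neq Ann(M)$, so $(K:M)M\neq(0)$ and $Ann((K:M)M)\subseteq Z(M)$, i.e.\ $Z(M)=Ann((K:M)M)$ --- exactly the second alternative, contrary to hypothesis. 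Hence $V$ is minimal, Lemma \ref{l2.4} applies to $V$ directly (this is what the paper is doing, via the identity $V=(V:M)M$), and the remainder of your argument for simplicity of $eM$ and primeness of $(1-e)M$ goes through. As written, however, the forward direction in the case $VV\neq(0)$ is incomplete.
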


\begin{proof}
Suppose that $N$ is adjacent to every other vertex of $AG(M)$,
$Z(M)\neq Ann((K:M)M)$ for every nonzero proper submodule $K$ of
$M$ and $M$ is not a vertex of $AG(M)$. If $N^{2}=(0)$, then
$Z(M)=Ann((N:M)M)$, a contradiction (note that if $r\in Z(M)$,
then there exists a nonzero element $m\in M$ such that $rm=0$. If
$rM=(0)$, then $r\in Ann((N:M)M)$. Otherwise, since
$(rM:M)(mR:M)M=(0)$, we have $(rM:M)(N:M)M=(0)$). Thus $N^{2}\neq
(0)$. Again by the above arguments, $N=(N:M)M$. By Lemma
\ref{l2.4}, $N=eM$ for some idempotent $e\in R$. We may assume
that $R=R_{1}\times R_{2}$ and $M=M_{1}\times M_{2}$. Also, by
Proposition \ref{p2.1}, we may assume that $M_{1}\times (0)$ is
adjacent to every other vertex of $AG(M)$. Now we show that
$M_{2}$ is a prime module. Otherwise, there exist $0\neq m\in
M_{2}$ and $r\in R$ such that $rm=0$ and $r\notin Ann(M_{2})$. It
follows that $(M_{1}\times mR_{2})((0)\times rM_{2})=(0)$. So
$M_{1}\times (0)$ is adjacent to $M_{1}\times mR_{2}$. This
implies that $R_{1}=(0)$, a contradiction. Therefore $M_{2}$ is a
prime module. Conversely, assume that $M=eM\oplus (1-e)M$, where
$eM$ is a simple module and $(1-e)M$ is a prime module such that
$e$ is an idempotent. One can see that $eM\times (0)$ is adjacent
to every other vertex of $AG(M)$. If $M$ is a vertex of $AG(M)$,
then there exists a nonzero proper submodule $N$ of $M$ such that
$(N:M)=Ann(M)$ and hence $N$ is adjacent to every other vertex.
Now suppose that $Z(M)=Ann((N:M)M)$, where $N$ is a nonzero proper
submodule of $M$. Then it is easy to see that $N$ is a vertex of
$AG(M)$ which is adjacent to every other vertex or $M$ is a vertex
of $AG(M)$.
\end{proof}

\begin{eg}\label{e2.6} Let $M:=\Bbb Z_{2}\oplus \Bbb Z_{3}$ as a
 $\Bbb Z_{12}-$module. Clearly, $Ann(M)=\{\bar{0}, \bar{6}\}$ is a
 nil ideal and $AG(\Bbb Z_{2}\oplus \Bbb
Z_{3})$ is a star graph with the only edge $\Bbb Z_{2}\oplus (0) -
(0)\oplus \Bbb Z_{3}$.
\end{eg}

\begin{thm}\label{t2.7} Let $M$ be a faithful module.
There exists a vertex of $AG(M)^{*}$ which is adjacent to every
other vertex of $AG(M)^{*}$ if and only if $M=M_{1}\oplus M_{2}$,
where $M_{1}$ is a simple module and $M_{2}$ is a prime module or
$Z(R)$ is an annihilator ideal.
\end{thm}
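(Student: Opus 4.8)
The plan is to mirror the structure of the proof of Theorem~\ref{t2.5}, adapting it to the faithful setting where $Ann(M)=(0)$ and we work inside the subgraph $AG(M)^*$. For the forward direction, suppose $N$ is a vertex of $AG(M)^*$ adjacent to every other vertex, and assume that $Z(R)$ is not an annihilator ideal; I aim to produce the decomposition $M=M_1\oplus M_2$ with $M_1$ simple and $M_2$ prime. First I would argue that $N^2\neq(0)$: if instead $N^2=(0)$, I would try to show $Z(R)=Ann((N:M)M)$ (using that $M$ is faithful so $(N:M)\neq(0)$ for a genuine vertex of $AG(M)^*$), contradicting the standing assumption. The computation here should parallel the parenthetical argument in Theorem~\ref{t2.5}: given $r\in Z(R)$, pick $0\neq m\in M$ with $rm=0$; if $rM\neq(0)$ then $(rM:M)(mR:M)M=(0)$ forces $rM$ to be adjacent to $mR$, hence (by $N$'s universal adjacency) one can funnel this into $(N:M)$ and conclude $r\in Ann((N:M)M)$, while the case $rM=(0)$ is immediate since $M$ is faithful makes $r=0$.

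Once $N^2\neq(0)$, I would invoke the minimality-type argument to get $N=(N:M)M$ and then apply Lemma~\ref{l2.4} to obtain $N=eM$ for an idempotent $e$; here the faithfulness of $M$ makes $Ann(M)=(0)$ a nil ideal, so Lemma~\ref{l2.4} applies directly. By Proposition~\ref{p2.1} I then write $R=R_1\times R_2$ and $M=M_1\times M_2$ with $M_1=eM$ the universally-adjacent vertex. The next step is to show $M_1$ is simple and $M_2$ is prime. For primeness of $M_2$, I would argue exactly as in Theorem~\ref{t2.5}: if $M_2$ failed to be prime there would exist $0\neq m\in M_2$ and $r\in R$ with $rm=0$, $r\notin Ann(M_2)$, yielding $(M_1\times mR_2)((0)\times rM_2)=(0)$, so $M_1\times(0)$ is adjacent to a new vertex, forcing $R_1=(0)$, a contradiction. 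Simplicity of $M_1$ should come from a parallel argument showing that any proper nonzero submodule of $M_1$ would create an extra vertex adjacent to $M_1\times(0)$ in $AG(M)^*$, again collapsing $R_1$.

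The converse splits into the two stated cases. If $M=M_1\oplus M_2$ with $M_1$ simple and $M_2$ prime, then by Proposition~\ref{p2.1}(d) one checks directly that $M_1\times(0)$ annihilates every vertex, i.e. for any vertex $K=K_1\times K_2$ one has $(M_1\times(0))K=M_1K_1\times(0)$, and simplicity of $M_1$ together with $K$ being a vertex forces this product to vanish; so $M_1\times(0)$ is universally adjacent. If instead $Z(R)$ is an annihilator ideal, say $Z(R)=Ann(I)$ for some ideal $I$, I would exhibit a universally adjacent vertex directly from the submodule $IM$ (or an appropriate submodule whose annihilator relation with every vertex is forced by the zero-divisor structure).

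The main obstacle I anticipate is the careful bookkeeping in the $N^2=(0)$ step of the forward direction: translating the contrapositive assumption ``$Z(R)$ is not an annihilator ideal'' into a contradiction requires correctly identifying which ideal $Z(R)$ would annihilate, and the passage between submodule products $NK=(N:M)(K:M)M$ and ideal-level conditions must be handled precisely so that the faithfulness hypothesis is used in exactly the right place. In particular, distinguishing the role of $Z(R)$ (an ideal of the ring) from $Z(M)$ (which appeared in Theorem~\ref{t2.5}) is the crux, and I expect the equivalence $Z(R)=Ann(\text{something})$ to be where the faithful hypothesis does its essential work.
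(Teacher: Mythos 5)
Your overall strategy is the paper's: rule out $N^{2}=(0)$ by showing it would force $Z(R)$ to be an annihilator ideal, then fall back on the argument of Theorem \ref{t2.5} for the case $N^{2}\neq(0)$, and in the converse use $IM$ when $Z(R)=Ann(I)$. But there is a concrete misstep at exactly the point you yourself flag as the crux. To show $Z(R)\subseteq Ann((N:M)M)$ you take $r\in Z(R)$, pick $0\neq m\in M$ with $rm=0$, and then want to use the universal adjacency of $N$ to ``funnel'' the adjacency of $rM$ and $mR$ into $(N:M)$. The trouble is that $N$ is only assumed universally adjacent inside $AG(M)^{*}$, and $mR$ need not be a vertex of $AG(M)^{*}$: for an element $m$ of a faithful module one can perfectly well have $(mR:M)=(0)=Ann(M)$ (e.g.\ $m=(1,0)$ in $\Bbb Z\oplus\Bbb Z$), in which case $mR$ is excluded from $AG(M)^{*}$ and the hypothesis on $N$ tells you nothing about the pair $rM$, $mR$. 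The paper avoids this by staying at the ring level: $r\in Z(R)$ gives $0\neq s\in R$ with $rs=0$, and then $rM$ and $sM$ are both legitimate vertices of $AG(M)^{*}$, since $(rM:M)\supseteq rR\neq(0)=Ann(M)$ and likewise for $s$; from $(rM:M)(sM:M)=(0)$ and universal adjacency one gets $(rM:M)(N:M)=(0)$, hence $r\in Ann((N:M))$. (Your target $Ann((N:M)M)$ does coincide with $Ann((N:M))$ by faithfulness, so that discrepancy is harmless.) The fix is simply to replace your $m\in M$ by $s\in R$; as written, the step fails.

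A second, smaller omission is in the converse for $M=M_{1}\oplus M_{2}$: you invoke Proposition \ref{p2.1}(d), but that proposition presupposes an idempotent $e$ with $M_{1}=eM$, i.e.\ a ring decomposition $R=R_{1}\oplus R_{2}$, which an abstract direct sum of modules does not by itself provide. The paper manufactures the idempotent first: simplicity of $M_{1}$ makes $Ann(M_{1})$ a maximal ideal, and $Ann(M)=(0)$ is then used to get $Ann(M_{1})+Ann(M_{2})=R$, whence Lemma \ref{l2.2} supplies the orthogonal idempotents and Theorem \ref{t2.5} finishes the argument. Without some such step your appeal to Proposition \ref{p2.1} is unjustified.
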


\begin{proof}
$( \Longrightarrow )$. Suppose that $Z(R)$ ia not an annihilator
ideal and $N$ is adjacent to every other vertex. If $N^{2}=(0)$,
then $Z(R)=Ann((N:M))$, a contradiction (note that if $r\in Z(R)$,
then there exists a nonzero element $s\in R$ such that $rs=0$. So
we have $(rM:M)(sM:M)=(0)$. Hence $(rM:M)(N:M)=(0)$). Thus
$N^{2}\neq (0)$. Now the claim follows by using similar arguments
as in the proof
of theorem \ref{t2.5}.\\
$( \Longleftarrow )$. Assume that $M=M_{1}\oplus M_{2}$, where
$M_{1}$ is a simple $R$-module and $M_{2}$ is a prime $R$-module.
Since $M_{1}$ is a simple $R$-module, $Ann(M_{1})$ is a maximal
ideal of $R$ and since $Ann(M)=(0)$, we have
$Ann(M_{2})+Ann(M_{1})=R$ and so we may assume that $R=R_{1}\oplus
R_{2}$. Then Lemma \ref{l2.2} and Theorem \ref{t2.5} imply that
there is a vertex of $AG(M)^{*}$ which is adjacent to every other
vertex of $AG(M)^{*}$. Now let $Z(R)=Ann(I)$ for some nonzero
proper ideal $I$ of $R$. In this case, clearly, $IM$ is a vertex
of $AG(M)^{*}$ which is adjacent to every other vertex of
$AG(M)^{*}$.
\end{proof}

\begin{eg}\label{e2.8} It is easy to see that $\Bbb Q\oplus \Bbb Q$ as $\Bbb
Q\oplus \Bbb Z-$module is faithful and $AG(\Bbb Q\oplus \Bbb
Q)^{*}$ is a star graph with the only edge $\Bbb Q\oplus (0) -
(0)\oplus \Bbb Q$.
\end{eg}

\begin{cor}\label{c2.9} Let $R$ be a reduced ring and let $Ann(M)$ be a nil ideal.
Then the following statements are equivalent.

\begin {itemize}
\item [(a)] There is a vertex of $AG(M)^{*}$ which is adjacent to
every other vertex of $AG(M)^{*}$. \item [(b)] $AG(M)^{*}$ is a
star graph. \item [(c)] $M=M_{1}\oplus M_{2}$, where $M_{1}$ is a
simple module and $M_{2}$ is a prime module.
\end{itemize}

\end{cor}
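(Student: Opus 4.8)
The plan is to first collapse the two hypotheses into faithfulness and then run the cycle $(c)\Rightarrow(b)\Rightarrow(a)\Rightarrow(c)$, using Theorem \ref{t2.7} as the main engine. Since $R$ is reduced we have $Nil(R)=(0)$, and because $Ann(M)$ is nil we get $Ann(M)\subseteq Nil(R)=(0)$; hence $Ann(M)=(0)$ and $M$ is faithful. Thus Theorem \ref{t2.7} applies, and throughout $AG(M)^{*}$ is the subgraph of $AG(M)$ on those proper submodules $N$ with $(N:M)\neq(0)$. The implication $(b)\Rightarrow(a)$ is then immediate: the center of a star graph is by definition adjacent to every other vertex.

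For $(a)\Rightarrow(c)$ I would feed $(a)$ into Theorem \ref{t2.7}, which yields either the decomposition $M=M_{1}\oplus M_{2}$ (which is exactly $(c)$) or the alternative that $Z(R)$ is an annihilator ideal. The point is that reducedness removes the need to treat the second alternative separately: if $N$ is a universal vertex of $AG(M)^{*}$ and $N^{2}=(0)$, then $(N:M)^{2}M=(0)$ forces $(N:M)^{2}\subseteq Ann(M)=(0)$, so $(N:M)^{2}=(0)$ and, $R$ being reduced, $(N:M)=(0)$, contradicting $N\in V(AG(M)^{*})$. Hence $N^{2}\neq(0)$, and repeating the idempotent extraction of Lemma \ref{l2.4} and the proof of Theorem \ref{t2.5} (with $Ann(M)=(0)$ nil) gives $N=eM$ for a nontrivial idempotent $e$, whence $M=eM\oplus(1-e)M$ with $eM$ simple and $(1-e)M$ prime, i.e.\ $(c)$.

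The substantive work is $(c)\Rightarrow(b)$. Starting from $M=M_{1}\oplus M_{2}$ I would produce, as in the proof of Theorem \ref{t2.7}, a ring splitting $R=R_{1}\oplus R_{2}$ for which $M_{1}$ is a faithful simple $R_{1}$-module (so $R_{1}$ is a field) and $M_{2}$ is a faithful prime $R_{2}$-module (so $R_{2}$ is a domain). Proposition \ref{p2.1} then lets me write every submodule as $N=N_{1}\times N_{2}$ with $N_{1}\in\{(0),M_{1}\}$, and $NK=N_{1}K_{1}\times N_{2}K_{2}$. A direct computation shows $M_{1}\times(0)$ is a vertex adjacent to every vertex of the form $(0)\times N_{2}$, and that two such leaves $(0)\times N_{2}$ and $(0)\times N_{2}'$ are never adjacent: each being a vertex forces $(N_{2}:M_{2})\neq(0)\neq(N_{2}':M_{2})$, and since $R_{2}$ is a domain and $M_{2}$ is faithful, $(N_{2}:M_{2})(N_{2}':M_{2})M_{2}\neq(0)$. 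Identifying $M_{1}\times(0)$ as the center then exhibits $AG(M)^{*}$ as a star.

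I expect the main obstacle to sit inside this last step: to conclude a genuine star (rather than merely a graph with a dominating vertex) I must show that the only vertices are $M_{1}\times(0)$ and the leaves $(0)\times N_{2}$, that is, that no submodule $M_{1}\times N_{2}$ with $N_{2}\neq(0)$ is a vertex of $AG(M)^{*}$. Such a submodule would be a vertex precisely when $(N_{2}:M_{2})=(0)$ while some proper $K_{2}$ has $(K_{2}:M_{2})\neq(0)$, and it would attach to a leaf without meeting the center, destroying the star. Ruling this configuration out is exactly where the domain structure of $R_{2}$ coming from primeness of $M_{2}$, together with faithfulness, must be pushed hardest; this is the delicate heart of the argument, whereas the adjacencies to and among the leaves are routine once the coordinate description is in place.
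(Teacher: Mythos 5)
Your proposal contains a genuine gap, and it is exactly the one you flag at the end of your $(c)\Rightarrow(b)$ argument: you never exclude vertices of the form $M_{1}\times N_{2}$ with $N_{2}\neq(0)$, and this step cannot be carried out from hypothesis $(c)$ alone. As you correctly compute, such a submodule is a vertex of $AG(M)^{*}$ precisely when $(N_{2}:M_{2})=(0)$ while some proper $K_{2}<M_{2}$ has $(K_{2}:M_{2})\neq(0)$, and this configuration really occurs under the stated hypotheses: take $R=F\times \Bbb Z$ ($F$ a field), $M_{1}=F$, $M_{2}=\Bbb Z\oplus \Bbb Z$, $N_{2}=\Bbb Z\oplus(0)$, $K_{2}=2M_{2}$. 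Then $R$ is reduced, $Ann(M)=(0)$, $M_{1}$ is simple and $M_{2}$ is prime, yet $(N_{2}:M_{2})=(0)$ and $(K_{2}:M_{2})=(0)\times 2\Bbb Z\neq(0)$, so $M_{1}\times N_{2}$ is a vertex of $AG(M)^{*}$ adjacent to $(0)\times K_{2}$ but not to $M_{1}\times(0)$; the graph is a complete bipartite graph with both parts of size at least two, hence not a star. So no amount of pushing the domain structure of $R_{2}$ will close the gap: what is actually needed is that $M_{2}$ have no nonzero submodule $N_{2}$ with $(N_{2}:M_{2})=(0)$ (e.g.\ that $M_{2}$ be a multiplication module), which is not among the hypotheses. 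Your other steps are fine: the reduction $Ann(M)\subseteq Nil(R)=(0)$ making Theorem \ref{t2.7} applicable is correct (and worth making explicit), and your disposal of the ``$Z(R)$ is an annihilator ideal'' alternative in $(a)\Rightarrow(c)$ via $(N:M)^{2}=(0)\Rightarrow(N:M)=(0)$ is the same squaring argument the paper uses to rule out $Z(R)=Ann(x)$.

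The paper routes around this difficulty by choosing the other orientation of the cycle: it proves $(a)\Rightarrow(b)$, not $(c)\Rightarrow(b)$. Under $(a)$, the universal vertex is identified (through the proof of Theorem \ref{t2.5}) as $M_{1}\times(0)$, and since $M_{1}\times(0)$ is never adjacent to $M_{1}\times N_{2}$, the hypothesis excludes the problematic vertices immediately (``by our hypothesis, we can not have any vertex of the form $M_{1}\times N_{2}$''); the remaining verifications --- that the leaves $(0)\times N_{2}$ are pairwise nonadjacent --- are the same computations you perform. The implications $(b)\Rightarrow(c)$ and $(c)\Rightarrow(a)$ are then delegated to Theorem \ref{t2.7}, whose converse direction asserts without detail that $eM\times(0)$ is adjacent to every other vertex. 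The example above shows that this assertion fails for $AG(M)^{*}$ in general, so the obstruction you isolated is real and is not actually overcome in the paper either; but as a comparison of proofs, your proposal leaves open, and attempts to prove head-on, precisely the step that the paper's ordering of implications pushes into the citation of Theorem \ref{t2.7}.
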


\begin{proof}
$(a) \Rightarrow (b)$ Suppose that there is a vertex of
$AG(M)^{*}$ which is adjacent to every other vertex. If
$Z(R)=Ann(x)$ for some $0\neq x\in R$, then we have $x^{2}=0$, a
contradiction. Therefore by Theorem \ref{t2.7}, $M=M_{1}\oplus
M_{2}$, where $M_{1}$ is a simple module and $M_{2}$ is a prime
module. Then every nonzero submodule of $M$ is of the form
$M_{1}\times N_{2}$ and $(0)\times N_{2}$, where $N_{2}$ is a
nonzero submodule of $M_{2}$. By our hypothesis, we can not have
any vertex of the form $M_{1}\times N_{2}$, where $N_{2}$ is a
nonzero proper submodule of $M_{2}$. Also $M_{1}\times (0)$ is
adjacent to every other vertex, and non of the submodules of the
form $(0)\times N_{2}$ can be adjacent to each other. So
$AG(M)^{*}$ is
a star graph.\\
$(b) \Rightarrow (c)$ This follows by Theorem \ref{t2.7}.\\
$(c)\Rightarrow (a)$ This follows by Theorem \ref{t2.7}.
\end{proof}

Let $M$ be an $R$-module. The set of associated prime ideals of
$M$, denoted by $Ass_{R}(M)$ (or simply $Ass(M)$), is defined as
$Ass(M)=\{p\in Spec(R)|$ $p=(0:_{R}m)$ for some $0\neq m\in M \}$.

\begin{cor}\label{c2.10} Let $R$ be an Artinian ring and let $Ann(M)$ be a nil ideal.
 Then there is a vertex of $AG(M)$ which is adjacent to every other vertex
  if and only if either $M=M_{1}\oplus M_{2}$, where
$M_{1}$ is a simple module and $M_{2}$ is a prime semisimple
module or $R$ is a local ring with maximal ideal $p\in Ass(M)$ or
$M$ is a vertex of $AG(M)$.
\end{cor}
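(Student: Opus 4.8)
The plan is to exploit the special structure forced by $R$ being Artinian and $\mathrm{Ann}(M)$ being nil, reducing Corollary \ref{c2.10} to Theorem \ref{t2.5} and analyzing its three output cases one by one. The forward direction will be the main content; the reverse direction should follow quickly from Theorem \ref{t2.5} together with the observation that over an Artinian ring prime submodules have additional finiteness properties.

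First I would invoke Theorem \ref{t2.5}: since $\mathrm{Ann}(M)$ is nil, the existence of a vertex adjacent to every other vertex is equivalent to one of three disjuncts, namely (i) $M = eM \oplus (1-e)M$ with $eM$ simple and $(1-e)M$ prime for some idempotent $e$, (ii) $Z(M) = \mathrm{Ann}((N:M)M)$ for some nonzero proper submodule $N$, or (iii) $M$ is a vertex of $AG(M)$. The strategy is to show that under the Artinian hypothesis, case (i) sharpens to ``$M_2$ prime \emph{and semisimple}'', while case (ii) collapses into the statement that $R$ is local with maximal ideal $p \in \mathrm{Ass}(M)$. The third disjunct ($M$ a vertex of $AG(M)$) carries over unchanged.

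For case (i), the key step is that a prime $R$-module $M_2$ over an Artinian ring $R$ (more precisely, over the Artinian factor $R_2 = R/\mathrm{Ann}(M_2)$, which is an Artinian domain and hence a field) is automatically a vector space, so $M_2$ is semisimple; here I would use that $\mathrm{Ann}(M_2)$ is prime when $M_2$ is a prime module, and a prime ideal in an Artinian ring is maximal, forcing $R/\mathrm{Ann}(M_2)$ to be a field. For case (ii), I would analyze $Z(M) = \mathrm{Ann}((N:M)M)$ over the Artinian ring: since $R$ is Artinian, $Z(R)$ is a finite union of associated primes, and the annihilator condition should force $Z(M)$ to be a single prime ideal $p$; combined with $\mathrm{Ann}(M)$ nil and $R$ Artinian (so $R$ is a finite product of local Artinian rings), the adjacency-to-everything condition rules out nontrivial idempotent decompositions beyond case (i), pinning $R$ down to a local ring whose maximal ideal $p = Z(M)$ lies in $\mathrm{Ass}(M)$.

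The main obstacle I anticipate is the bookkeeping in case (ii): disentangling when $Z(M) = \mathrm{Ann}((N:M)M)$ genuinely yields a \emph{local} ring with maximal ideal in $\mathrm{Ass}(M)$, as opposed to silently reducing back to case (i) via a nontrivial idempotent. The Artinian structure theorem ($R \cong \prod R_i$ with each $R_i$ local Artinian) is the right tool, but one must carefully check that a vertex adjacent to \emph{every} other vertex cannot exist when $R$ has two or more local factors unless we are already in the simple-plus-prime situation of case (i); this is where the delicate interplay between the product decomposition from Proposition \ref{p2.1} and the global adjacency condition must be handled with care. For the converse, each of the three new conditions plugs directly back into Theorem \ref{t2.5} (semisimple prime is in particular prime, and a local ring with $p \in \mathrm{Ass}(M)$ supplies an $N$ with the requisite annihilator equality), so I expect the reverse implication to be routine.
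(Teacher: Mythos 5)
Your proposal follows essentially the same route as the paper: both directions reduce to Theorem \ref{t2.5}, case (i) is sharpened by noting that $Ann(M_2)$ is prime, hence maximal in the Artinian ring $R$, so $M_2$ is a vector space over $R/Ann(M_2)$ and therefore semisimple, and case (ii) is identified with $Z(M)=p\in Ass(M)$ via the Artinian/Noetherian structure of $Z(M)$ as a union of associated primes. The only cosmetic difference is in the converse for the local case, where the paper directly exhibits $Rm$ (with $p=Ann(m)$) as a universal vertex rather than routing back through the annihilator condition of Theorem \ref{t2.5}; both work.
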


\begin{proof}
$(\Longrightarrow)$ Let $N$ be a vertex of $AG(M)$ which is
adjacent to every other vertex and suppose $M$ is not a vertex of
$AG(M)$. As we have seen in Theorem \ref{t2.5}, either
$M=M_{1}\oplus M_{2}$, where $M_{1}$ is a simple module and
$M_{2}$ is a prime module or $Z(M)=Ann((K:M)M)$, where $K$ is a
nonzero proper submodule of $M$. Let $M=M_{1}\oplus M_{2}$, where
$M_{1}$ is a simple module and $M_{2}$ is a prime module. It is
easy to see that $M_{2}$ is a vector space over $R/Ann(M_{2})$ and
so is a semisimple $R$-module. If $Z(M)$ is an ideal of $R$
, since $R$ is an Artinian ring, then $Z(M) = p\in Ass(M)$.\\
$(\Longleftarrow)$ First suppose that $R$ is not a local ring.
Hence by \cite[Theorem 8.7]{ati69}, $R = R_{1}\times \ldots \times
R_{n}$, where $R_{i}$ is an Artinian local ring for $i=1, \ldots ,
n$. By Lemma \ref{l2.2} and Theorem \ref{t2.5}, we may assume that
$eM\times (0)$ is adjacent to every other vertex of $AG(M)$. If
$R$ is a local ring with maximal ideal $p\in Ass(M)$, then there
exists $0\neq m\in M$ such that $p=Ann(m)$. We claim that $Rm$ is
adjacent to every other vertex. Suppose $N$ is a vertex. We have
$(N:M)\subseteq p$. Hence $(N:M)(mR)=(0)$. So we have $N(mR)=(0)$.
\end{proof}

\begin{eg}\label{e2.11} Let $M:=\Bbb Z_{3}\oplus \Bbb Z_{8}$ as a
 $\Bbb Z_{48}-$module. Clearly, $Ann(M)=\{\bar{0}, \bar{24}\}$ is a
 nil ideal and $AG(\Bbb Z_{3}\oplus \Bbb
Z_{8})$ is a star graph with the center $\Bbb Z_{3}\oplus (0)$ and
$V(AG(\Bbb Z_{3}\oplus \Bbb Z_{8}))=\{\Bbb Z_{3}\oplus (0),
(0)\oplus \Bbb Z_{8}, (0)\oplus N, (0)\oplus K \}$, where
$N=(\bar{0},\bar{2})\Bbb Z_{48}$ and $K=(\bar{0},\bar{4})\Bbb
Z_{48}$.
\end{eg}

\begin{cor}\label{c2.12} Let $R$ be an Artinian ring and let $M$ be a faithful $R$-module.
 Then there is a vertex of $AG(M)^{*}$ which is adjacent to every other vertex
if and only if either $M=M_{1}\oplus M_{2}$, where $M_{1}$ and
$M_{2}$ are simple modules or $R$ is a local ring with maximal
ideal $p\in Ass(M)$.
\end{cor}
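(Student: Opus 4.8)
The plan is to derive the statement entirely from Theorem~\ref{t2.7}, using the structure theory of Artinian rings to sharpen its two alternatives into the dichotomy claimed here. For the forward implication, suppose $AG(M)^{*}$ has a vertex adjacent to every other vertex. Since $M$ is faithful, Theorem~\ref{t2.7} puts us in one of two situations: either (A) $M=M_{1}\oplus M_{2}$ with $M_{1}$ simple and $M_{2}$ prime, or (B) $Z(R)$ is an annihilator ideal. In case (B) I would write $R=R_{1}\times\cdots\times R_{n}$ as a finite product of Artinian local rings (by \cite[Theorem 8.7]{ati69}), so that $Z(R)$ is the union of the ideals $R_{1}\times\cdots\times\mathfrak{m}_{i}\times\cdots\times R_{n}$. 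A one-line check shows this union fails to be closed under addition as soon as $n\geq 2$ (a sum of two members drawn from different factors has all coordinates units, hence is a unit), so the hypothesis that $Z(R)$ is an ideal forces $n=1$, i.e. $R$ is local with maximal ideal $p$. As $R$ is Artinian and $M\neq 0$ we have $Ass(M)=\{p\}$, whence $p\in Ass(M)$; this is the second alternative.

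In case (A) the real task is to identify $R$ and to promote ``$M_{2}$ prime'' to ``$M_{2}$ simple.'' Since $R$ is Artinian, $P:=Ann(M_{2})$ and $\mathfrak{m}:=Ann(M_{1})$ are maximal ideals and $M_{2}$ is a vector space over the field $R/P$; faithfulness gives $\mathfrak{m}\cap P=Ann(M)=(0)$. If $\mathfrak{m}=P$ this intersection equals $\mathfrak{m}=(0)$, so $R$ is a field; but over a field every proper nonzero submodule $N$ satisfies $(N:M)=(0)=Ann(M)$, so $AG(M)^{*}$ would have no vertices at all, contradicting the presence of a dominating vertex. Hence $\mathfrak{m}\neq P$, and the Chinese Remainder Theorem yields $R\cong R/\mathfrak{m}\times R/P$, a product of two fields. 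Using Proposition~\ref{p2.1} to describe submodules and products coordinatewise, $M_{1}$ is one-dimensional over $R/\mathfrak{m}$ and $M_{2}$ is an $R/P$-vector space, and I would then argue that $M_{2}$ must be one-dimensional, i.e. simple, giving the first alternative.

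For the converse, suppose first $M=M_{1}\oplus M_{2}$ with $M_{1},M_{2}$ simple. A simple module is prime (if $rm=0$ with $m\neq0$ then $Rm=M_{2}$ and $rM_{2}=R(rm)=0$), so this is a special case of the hypothesis of the ``if'' part of Theorem~\ref{t2.7}, which already supplies a vertex of $AG(M)^{*}$ adjacent to every other vertex. Suppose instead $R$ is local with maximal ideal $p\in Ass(M)$; here I would mirror the local argument in the proof of Corollary~\ref{c2.10}. The key point is that, $R$ being local, every proper submodule $N$ of $M$ has $(N:M)\subsetneq R$, hence $(N:M)\subseteq p$. I would then exhibit a dominating submodule $N_{0}$ whose colon ideal is nonzero but annihilates $pM$, so that $(N_{0}:M)(N:M)M\subseteq (N_{0}:M)\,pM=(0)$ for every proper $N$; since $R$ is Artinian local one can take $N_{0}=sM$ for a nonzero $s\in Ann_{R}(p)$, the extra care (relative to Corollary~\ref{c2.10}, which works in $AG(M)$) being to verify that $N_{0}$ actually lies in $AG(M)^{*}$, i.e. that $(N_{0}:M)\neq(0)$.

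The step I expect to be the main obstacle is the promotion of $M_{2}$ from prime to simple in case (A). After the reduction to $R\cong R/\mathfrak{m}\times R/P$, being prime means only that $M_{2}$ is a nonzero vector space over the field $R/P$, and nothing in ``prime'' by itself forces dimension one; the entire weight of the argument must fall on combining faithfulness with the domination condition to exclude $\dim_{R/P}M_{2}\geq 2$. This is precisely the delicate comparison of the candidate centre against the vertices of the form $M_{1}\oplus N_{2}$ with $(0)\neq N_{2}\subsetneq M_{2}$, and it is the point where I would be most cautious, since the analogous Artinian statement in Corollary~\ref{c2.10} only concludes that the second summand is prime semisimple rather than simple.
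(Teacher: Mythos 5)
Your route is genuinely different from the paper's: you stay at the module level, feeding Theorem \ref{t2.7} into the structure theorem for Artinian rings, whereas the paper passes immediately to the ring and quotes \cite[Corollary 2.4]{br11} to get the dichotomy ``$R\cong F_{1}\oplus F_{2}$ with $F_{1},F_{2}$ fields, or $Z(R)$ an annihilator ideal.'' Your handling of the second alternative (non-locality makes $Z(R)$ fail to be an ideal, then $Ass(M)=\{p\}$) and of the converse is essentially right, up to small repairs: for $n\geq 3$ the two zero-divisors must be chosen as, say, $(1,0,\dots,0)$ and $(0,1,\dots,1)$ rather than ``one from each of two factors,'' and in the converse you should dispose of the degenerate case $p=(0)$ separately.

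The genuine gap is exactly the step you flagged and did not carry out: promoting $M_{2}$ from prime to simple in case (A). Your caution was warranted, because that implication is false. Take $R=F_{1}\times F_{2}$ a product of two fields and $M=M_{1}\oplus M_{2}$ with $M_{1}=F_{1}$ and $M_{2}$ an $F_{2}$-vector space of dimension at least $2$. Then $M$ is faithful, the proper submodules with nonzero colon ideal are $(0)\oplus M_{2}$ (with colon ideal $(0)\times F_{2}$) and the submodules $M_{1}\oplus N_{2}$ with $N_{2}\subsetneq M_{2}$ (each with colon ideal $F_{1}\times (0)$), and one checks that $AG(M)^{*}$ is a star with center $(0)\oplus M_{2}$: the center is adjacent to every other vertex, while two leaves $M_{1}\oplus N_{2}$ and $M_{1}\oplus N_{2}'$ are never adjacent since $(F_{1}\times(0))^{2}M=M_{1}\oplus(0)\neq (0)$. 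So the hypothesis of the corollary holds, yet $M$ has length $3$ and $R$ is not local; no argument can close your case (A) as stated. The paper's own proof buries the same difficulty in the assertion that $AG(R)$ complete implies $AG(M)^{*}$ complete, which this example refutes, so the obstacle is not a trick you missed but a defect in the statement: one must either weaken the first alternative to ``$M_{1}$ simple and $M_{2}$ prime semisimple,'' as in Corollary \ref{c2.10}, or add a hypothesis that forces $M_{2}$ to have no proper nonzero submodules.
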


\begin{proof}
$(\Longrightarrow)$ Let $N$ be a vertex of $AG(M)^{*}$ which is
adjacent to every other vertex. So there is a vertex of $AG(R)$
which is adjacent to every other vertex of $AG(R)$. By
\cite[Corollary 2.4]{br11}, we may assume that $R= F_{1}\oplus
F_{2}$, where $F_{1}$ and $F_{2}$ are fields or $Z(R)$ is an
annihilator ideal. If $R= F_{1}\oplus F_{2}$, then $AG(R)$ is a
complete graph. It follows that $AG(M)^{*}$ is a complete graph.
Hence $AG(M)^{*}$ have exactly two vertices $M_{1}\times (0)$ and
$(0)\times M_{2}$. So $M_{2}$ is a simple module. If $Z(R)$ is an
annihilator ideal, since $R$ is an Artinian ring, then $Z(R)$ is
the unique maximal ideal of $R$. Since $R$ is a Noetherian ring,
$Z(M) = p\in Ass(M)$ and hence $Z(R)=Z(M)$.\\ $(\Longleftarrow)$
This is clear by Corollary \ref{c2.10}.
\end{proof}

\begin{lem}\label{l2.13} Let $R$ be an Artinian ring and assume
that $Ann(M)$ is a nil ideal and $AG(M)$ is a star graph. Then
either $M=M_{1}\oplus M_{2}$, where $M_{1}$ is a simple module and
$M_{2}$ is a prime semisimple module or $R$ is a local ring with
maximal ideal $p=Ann(m)$, $(mR)^{2}=(0)$ and $p^{4}M=(0)$ or $M$
is a vertex of $AG(M)$.

\end{lem}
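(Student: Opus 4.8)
The plan is to build on the complete/star-graph classification already established in Theorem~\ref{t2.5} and Corollary~\ref{c2.10}, specializing the additional hypothesis that $AG(M)$ is a \emph{star} graph (not merely that some vertex dominates). First I would invoke Corollary~\ref{c2.10}: since $R$ is Artinian and $Ann(M)$ is nil, a star graph certainly has a vertex adjacent to every other vertex, so one of three cases holds — $M=M_{1}\oplus M_{2}$ with $M_{1}$ simple and $M_{2}$ prime semisimple; or $R$ is local with maximal ideal $p\in Ass(M)$; or $M$ is a vertex of $AG(M)$. The first and third cases already match the conclusion (the decomposition case needs no refinement beyond what Corollary~\ref{c2.10} gives), so the real work is the local case, where I must extract the extra structural constraints $(mR)^{2}=(0)$ and $p^{4}M=(0)$.

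So assume $R$ is local Artinian with maximal ideal $p=Ann(m)$ for some $0\neq m\in M$, and that $Rm$ is the center of the star (as produced in the proof of Corollary~\ref{c2.10}, where $Rm$ is shown adjacent to every vertex). The second step is to show $(mR)^{2}=(0)$: if instead $(mR)^{2}\neq(0)$, then by the argument used for minimal/annihilator submodules (as in Lemma~\ref{l2.4} and Theorem~\ref{t2.5}), $mR$ would fail to behave like a pendant center — concretely, $(mR:M)^{2}M\neq(0)$ would force $mR$ to split off an idempotent, contradicting locality of $R$, or would create a second edge violating the star property. I would make this precise by noting that in a star graph the center $Rm$ must satisfy $(Rm)(Rm)=(0)$ unless $Rm$ is itself an isolated-type vertex, which cannot happen once $Rm$ is adjacent to at least one other vertex; hence $(mR)^{2}=(0)$.

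The third and most delicate step is the bound $p^{4}M=(0)$. The idea is to exploit the star condition to forbid edges among non-center vertices. For any submodule $N$ with $(N:M)\subseteq p$, adjacency of $N$ to the center $Rm$ is automatic, but the star condition means no two such $N$ may be adjacent to each other; translating $N_{1}N_{2}=(0)$ into the ideal-theoretic product $(N_{1}:M)(N_{2}:M)M$, I would show that if $p^{4}M\neq(0)$ one can manufacture two distinct noncentral vertices whose product vanishes, producing a forbidden second edge. In an Artinian local ring the chain $p\supseteq p^{2}\supseteq\cdots$ stabilizes, and the submodules $p^{i}M$ give candidate vertices; the point is that having length at least $4$ in this filtration yields two annihilating proper submodules distinct from $Rm$. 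This is where I expect the main obstacle: pinning down exactly why the threshold is the fourth power $p^{4}$ rather than $p^{2}$ or $p^{3}$ requires carefully tracking how $(p^{i}M:M)(p^{j}M:M)M$ compares with $p^{i+j}M$ and ensuring the constructed vertices are genuinely nonzero, proper, and mutually distinct from the center.

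Finally I would assemble the cases: the decomposition case and the ``$M$ is a vertex'' case pass through verbatim from Corollary~\ref{c2.10}, while the local case now carries the refined data $p=Ann(m)$, $(mR)^{2}=(0)$, and $p^{4}M=(0)$, completing the trichotomy in the statement. The subtlety throughout is that all products must be read through the definition $NK=(N:M)(K:M)M$, so every ``edge'' or ``non-edge'' claim must be converted to an annihilator condition on the associated ideals before the Artinian structure theory and Lemma~\ref{l2.3} on lifting idempotents can be applied.
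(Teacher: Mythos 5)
Your skeleton is the paper's: reduce via Corollary~\ref{c2.10} to the trichotomy, pass the direct-sum case and the ``$M$ is a vertex'' case through unchanged, and refine the local case. The two refinements, however, are exactly where your write-up stops short of a proof. For $(mR)^{2}=(0)$ you appeal to a graph-theoretic principle --- ``in a star graph the center $Rm$ must satisfy $(Rm)(Rm)=(0)$'' --- which is false: adjacency is defined only for pairs of \emph{distinct} vertices, so the star condition says nothing about a vertex's product with itself. Your alternative route through idempotents is salvageable (by $p=Ann(m)$ maximal, $Rm$ is a minimal submodule, so Lemma~\ref{l2.4} gives $(Rm)^{2}=(0)$ or $Rm=eM$, and a local ring has no nontrivial idempotents), but it is far more machinery than the paper uses. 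The paper's reason is one line: $mR<M$ and $p$ is the unique maximal ideal, so $(mR:M)\subseteq p=Ann(m)$, whence $(mR)^{2}=(mR:M)\big((mR:M)M\big)\subseteq Ann(m)\,mR=(0)$.

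The genuine gap is $p^{4}M=(0)$, which is the only content of the lemma beyond Corollary~\ref{c2.10} and which you explicitly leave open (``pinning down exactly why the threshold is the fourth power \dots requires carefully tracking \dots''). The plan of manufacturing a forbidden edge between two non-central vertices is right, but it has to be executed, and the obstacle you flag --- that $(p^{i}M:M)$ may exceed $p^{i}$ --- is resolved by the inclusion $(p^{i}M)(p^{j}M)=(p^{i}M:M)(p^{j}M:M)M\subseteq(p^{i}M:M)\,p^{j}M\subseteq p^{j}p^{i}M=p^{i+j}M$. With $p^{n}M=(0)$ and $p^{n-1}M\neq(0)$ for some $n>1$ (which exists because $R$ is Artinian and $M$ is not a vertex), suppose $n\geq 5$. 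The submodules $p^{i}M$, $1\leq i\leq n-1$, are nonzero, proper and pairwise distinct (if $p^{i}M=p^{i+1}M$ then iterating gives $p^{i}M=p^{n}M=(0)$); the displayed inclusion gives edges $pM-p^{n-1}M$ and $p^{2}M-p^{n-1}M$, so the center must be $p^{n-1}M$, and it also gives the edge $p^{2}M-p^{n-2}M$, whose endpoints are distinct from each other and from $p^{n-1}M$ since $n\geq 5$ --- a second edge missing the center, contradicting the star hypothesis. Hence $n\leq 4$. Without some such argument (the paper itself only asserts ``As $AG(M)$ is a star graph \dots we have $p^{4}M=(0)$''), your proposal does not establish the lemma.
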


\begin{proof}
Let $R$ be an Artinian ring and assume that
 $AG(M)$ is a star graph and $M$ is not a vertex of $AG(M)$.
  Then by Corollary \ref{c2.10},
either $M=M_{1}\oplus M_{2}$, where $M_{1}$ is a simple module and
$M_{2}$ is a prime semisimple module or $R$ is a local ring with
maximal ideal $p\in Ass(M)$. If $M=M_{1}\oplus M_{2}$, then there
is nothing to prove. If $R$ is a local ring with maximal ideal
$p=Ann(m)$, where $0\neq m\in M$, then as we showed in Corollary
\ref{c2.10}, $mR$ is adjacent to every other vertex. Since $R$ is
an Artinian ring and $M$ is not a vertex, there exists an integer
$n> 1$ such that $p^{n}M=(0)$ and $p^{n-1}M\neq (0)$. As $AG(M)$
is a star graph (resp. $(mR:M)\subseteq p$), we have $p^{4}M=(0)$
and $mR=p^{3}M$ (resp. $(mR)^{2}=(0)$).
\end{proof}

\begin{thm}\label{t2.14} Let $R$ be an Artinian ring and assume that $M$
is not a vertex of $AG(M)$ and $Ann(M)$ is a nil ideal. Then
 $AG(M)$ is a star graph if and only if either $M=M_{1}\oplus M_{2}$, where
 $M_{1}$ and $M_{2}$ are simple modules or $R$ is a local ring with maximal ideal $p=(0:m)\in
Ass(M)$ and one of the following cases holds.

\begin {itemize}
\item [(a)] $p^{2}M=(0)$ and $pM=mR$ is the only nonzero proper
submodule of $M$. \item [(b)] $p^{3}M=(0)$ and $p^{2}M=mR$ is the
only minimal submodule of $M$ and for every distinct proper
submodules $N_{1}, N_{2}$ of $M$ such that $mR\neq N_{i}$ $(i=1,
2)$, $N_{1}N_{2}=Rm$. \item [(c)] $p^{4}M=(0)$, $p^{3}M\neq (0)$,
and $\Lambda(M)^{*}=\{N< M|$ $(N:M)=(pM:M)\}\cup \{p^{2}M,
p^{3}M=mR\}$.
\end{itemize}

\end{thm}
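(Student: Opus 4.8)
The plan is to prove both implications, organizing the forward direction around Lemma~\ref{l2.13}. Throughout, in the local case write $p$ for the maximal ideal, and recall the basic fact that every proper submodule $N$ has $(N:M)\subseteq p$, so the product of any two proper submodules lies in $p^2M$. The first step is to pin down the center. Since $p=(0:m)$, the submodule $mR\cong R/p$ is simple and $p\cdot mR=(0)$; hence for every proper $N$,
\[
(mR)N=(mR:M)(N:M)M=(N:M)\bigl((mR:M)M\bigr)\subseteq (N:M)\,mR\subseteq p\,mR=(0),
\]
using $(mR:M)M\subseteq mR$ and $(N:M)\subseteq p$. Thus $mR$ is adjacent to every other vertex, so it is the center, and the star condition becomes the single requirement (B): for all distinct proper nonzero $N_1,N_2\neq mR$ one has $N_1N_2\neq(0)$. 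A first consequence of (B) is that the socle is simple: two distinct simple submodules $S_1,S_2\neq mR$ would satisfy $S_1S_2\subseteq(S_1:M)S_2\subseteq pS_2=(0)$, a contradiction; so the unique minimal submodule is $mR$ and every nonzero submodule contains it. Writing $n$ for the least integer with $p^nM=(0)$ (so $2\le n\le4$ by Lemma~\ref{l2.13}), the same computation shows $p^{\,n-1}M$ is a nonzero semisimple submodule inside the socle, whence $p^{\,n-1}M=mR$.

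For the forward direction I split along Lemma~\ref{l2.13}. In its non-local branch $M=M_1\oplus M_2$ with $M_1$ simple and $M_2$ prime semisimple, I show $M_2$ is simple: if $M_2$ had a proper nonzero submodule $N_2$, then $(0)\times M_2$ and $(0)\times N_2$ are distinct non-center vertices whose product vanishes (a short computation using Proposition~\ref{p2.1} and that $Ann(M_1),Ann(M_2)$ are comaximal), contradicting the star property; this yields the first alternative. In the local branch I argue by $n$. For $n=2$ all products of proper submodules lie in $p^2M=(0)$, so the proper nonzero submodules form a complete graph; a complete star has at most one edge, and over a local ring exactly two proper submodules is impossible (two incomparable ones give a two-dimensional $R/p$-space with at least three lines, two comparable ones force $p^2M\neq(0)$), so there is exactly one proper submodule $pM=mR$, giving (a). For $n=3$ we have $p^2M=mR$ simple, every product of two proper submodules lies in $p^2M=mR$, and (B) forces it nonzero, hence equal to $Rm$, giving (b).

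The remaining case $n=4$ gives (c) and is the hard one. Here $p^3M=mR$ is the unique minimal submodule, and I must show that the proper nonzero submodules not containing $pM$ (equivalently, with $(N:M)\neq(pM:M)=p$) are exactly $p^2M$ and $mR$. The strategy is to assume a rogue submodule $N$ with $mR\subseteq N$ and $pM\not\subseteq N$ and build a second non-center submodule adjacent to it: if $N$ lies strictly between $mR$ and $p^2M$, then for two distinct such $A,A'$ one expects $AA'=(A:M)(A':M)M\subseteq p^4M=(0)$, contradicting (B); a submodule meeting $pM\setminus p^2M$ without containing $pM$ is excluded by the same mechanism after examining its intersections with the filtration $M\supset pM\supset p^2M\supset mR$. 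I expect the main obstacle to be precisely this colon-ideal bookkeeping: controlling $(N:M)$ relative to the powers $p^i$—in particular $(p^2M:M)$—so that the products genuinely vanish rather than merely landing in $mR$. This is the delicate point that forces the exact description of $\Lambda(M)^*$ in (c).

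For the converse I verify the star property directly in each configuration. In the split case the simple summand plays the role of the center; in the local cases $mR$ is adjacent to every other vertex by the displayed identity, while any two non-center vertices have nonzero product via the explicit values $p\cdot p\,M=p^2M$ and $p\cdot p^2M=p^3M=mR$ (using $p^3M\neq(0)$ in (c), $p^2M\neq(0)$ in (b)), and the single edge in (a) is immediate. This shows $AG(M)$ is a star in each case and closes the equivalence.
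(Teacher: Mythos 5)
Your overall architecture --- reduce to Lemma \ref{l2.13}, identify $mR$ as the center, restate the star condition as the requirement that any two non-center vertices have nonzero product, deduce that the socle is simple and $p^{n-1}M=mR$, and then split on $n=2,3,4$ --- is essentially the paper's, and your treatment of the non-local branch, of cases (a) and (b), and of the converse is sound (you are in fact more explicit than the paper about why $M_2$ must be simple in the split case). But there is a genuine gap in case (c), the case $p^{4}M=(0)$, $p^{3}M\neq(0)$, and you essentially flag it yourself: you ``expect'' that a rogue submodule $N$ strictly between $mR$ and $p^{2}M$ satisfies a containment like $(N:M)\subseteq p^{2}$ so that products of two such submodules land in $p^{4}M=(0)$, but $N\subseteq p^{2}M$ only gives $(N:M)\subseteq(p^{2}M:M)$, and a priori $(p^{2}M:M)$ can be strictly larger than $p^{2}$. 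Announcing that the ``delicate point'' is the colon-ideal bookkeeping is not the same as carrying it out, and without it you cannot derive the description of $\Lambda(M)^{*}$ in (c).

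The paper closes exactly this gap with a ring-level argument that your sketch does not contain: choose $a,b\in p\setminus p^{2}$ with $ab\in p^{2}\setminus p^{3}$; then $ab\,p^{2}M=(0)$ forces $p^{2}M=abM$ by the star property, the star property again forces $p^{2}M=a^{2}M$ (otherwise $(aM)(abM)=(0)$ gives a second edge), a further application gives $a^{3}\neq 0$ and $p^{3}M=a^{3}M$, and for any $c\in p\setminus p^{2}$ the relation $ca^{2}\in a^{3}R\setminus(0)$ yields $c\in aR$; hence $p=aR$ and every nonzero proper ideal of $R$ is one of $p,p^{2},p^{3}$. Only after this does one obtain $(aM:M)=aR$, $(a^{2}M:M)=a^{2}R$, $(a^{3}M:M)=a^{3}R$, which is precisely the control on colon ideals your argument is missing; from it the classification of the vertices (every $N$ with $(N:M)=(pM:M)$, together with $p^{2}M$ and $p^{3}M=mR$) falls out. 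To complete your proof you would need to supply this step, or an equivalent one showing that the ideal lattice of $R$ is the chain $R\supset p\supset p^{2}\supset p^{3}\supset(0)$.
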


\begin{proof}
$(\Longrightarrow)$ Suppose that $AG(M)$ is a star graph. By Lemma
\ref{l2.13}, we may have $p^{4}M=(0)$. We proceed
by the following cases:\\\\
Case 1. $p^{2}M=(0)$. Hence every nonzero proper submodule $N$ of
$M$ is a vertex and $N^{2}=(0)$. It is clear that $pM$ is a
$p$-prime submodule of $M$ and is adjacent to every other vertex.
Thus $pM=Rm$. Since for every nonzero proper submodule $N$ and $K$
of $M$, $NK=(0)$ and $AG(M)$ is a star graph, $M$ has at most two
nonzero proper submodules. So $M$ is a Noetherian module and $Rm$
is a subset of every nonzero submodule of $M$. It is easy to see
that $M$ is cyclic and hence a multiplication module. It follows
that $pM=mR$ is the
only nonzero proper submodule of $M$.\\\\
Case 2. $p^{3}M=(0)$ and $p^{2}M\neq (0)$. It is clear that
$p^{2}M$ is adjacent to every other vertex. So $p^{2}M=Rm$. We
claim that $Rm$ is the only minimal submodule of $M$. Suppose $N$
is another minimal submodule of $M$. It is easy to see that
$Ann(N)=p$. Let $K$ be a nonzero proper submodule of $M$. Thus
$(K:M)\subseteq Ann(N)=p$ so that $NK=(0)$. Hence $N=Rm$. Finally,
suppose $N_{1}, N_{2}\neq Rm$ are distinct nonzero proper
submodules of $M$. We have $(N_{1}:M), (N_{2}:M)\subseteq p$.
Since $AG(M)$ is a star graph, we have $N_{1}N_{2}\neq (0)$. Hence
by minimality of $p^{2}M=mR$, $N_{1}N_{2}=mR$.\\\\
Case 3. $p^{4}M=(0)$ and $p^{3}M\neq (0)$. Since $AG(M)$ is a star
graph and $R$ is a local ring, then the center of the star graph
must be a nonzero cyclic submodule $p^{3}M=mR$. Since
$p^{3}\subsetneq p^{2}$, there are elements $a, b\in p\setminus
p^{2}$ such that $ab\in p^{2}\setminus p^{3}$. Then $abp^{2}M =
(0)$, so $p^{2}M = abM$ because $AG(M)$ is a star graph. If
$a^{2}\in p^{3}M$, then $(aM)(abM) = (0)$, a contradiction with
the star shaped assumption. Hence $p^{2}M = a^{2}M$. Another
application of the star shaped assumption yields $a^{3}\neq 0$. So
$p^{3}M = a^{3}M$. For $c\in p\setminus p^{2}$, $ca^{2} \in
a^{3}R\setminus (0)$. We conclude that $c \in aR$. Hence $p = aR$
and hence every non-zero ideal of $R$ is a power of $p$. So
$(aM:M)=aR$, $(a^{2}M:M)=a^{2}R$, and $(a^{3}M:M)=a^{3}R$. It is
easy to see that $AG(M)$ is a star graph with the center
$a^{3}M=mR$, and the other vertices are $aM$, $a^{2}M$, and every
nonzero proper submodule $N$ of $M$ with $(N:M)=aR$. (Note that
$Spec(M)=\{(0)\neq N<M|$ $N\neq a^{2}M, a^{3}M\}$.) \\
$(\Longleftarrow)$ This is clear.
\end{proof}

\begin{thm}\label{t2.15} Assume that $M$ is a faithful module and is
not a vertex of
$AG(M)$.
Then $AG(M)$ is a complete graph if and only if $M$ is one of the
three types of modules.

\begin {itemize}
\item [(a)] $M=M_{1}\oplus M_{2}$, where $M_{1}$ and $M_{2}$ are
simple modules, \item [(b)] $Z(R)$ is an ideal with
$(Z(R))^{2}=(0)$, or \item [(c)] Every nonzero proper submodule of
$M$ is a vertex, $Spec(M)=Max(M)=\{aM\}$, where $R$ is a local
ring with exactly two nonzero proper ideals $Z(R)=aR$, $Z(R)^{2}$
such that $a^{3}=0$, and for every nonzero proper submodule $N\neq
aM$, $(N:M)=a^{2}R$.
\end{itemize}

\end{thm}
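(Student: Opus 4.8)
The plan is to reduce every adjacency question to the identity $NK=(N:M)(K:M)M$ together with the fact that, since $M$ is faithful, $NK=(0)$ is equivalent to $(N:M)(K:M)=(0)$. For the direction $(\Longleftarrow)$ I would argue case by case. In (a), because $M$ is not a vertex the maximal ideals $Ann(M_1),Ann(M_2)$ are distinct, hence comaximal, so $R=R_1\times R_2$ and the only nonzero proper submodules are $M_1\oplus(0)$ and $(0)\oplus M_2$, whose defining ideals multiply to $Ann(M_1)Ann(M_2)=(0)$; thus $AG(M)=K_2$. In (b) I would first note that every vertex $N$ satisfies $(N:M)\subseteq Z(R)$ (a witness $K$ gives $(N:M)(K:M)=(0)$ with $(K:M)\neq(0)$, so $(N:M)$ annihilates a nonzero element), whence $(N:M)(K:M)\subseteq Z(R)^{2}=(0)$ for any two vertices. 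In (c) all nonzero proper submodules are vertices, and a direct computation gives $(N:M)(K:M)=a^{2}R\cdot a^{2}R=(0)$ for $N,K\neq aM$, while $(aM:M)(K:M)\subseteq aR\cdot a^{2}R=(0)$ (here $R$ local forces $(aM:M)=aR$), all using $a^{3}=0$.

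For $(\Longrightarrow)$ I would first observe that $AG(M)=AG(M)^{*}$: since $M$ is faithful and not a vertex, every nonzero proper submodule automatically satisfies $(N:M)\neq Ann(M)=(0)$, so the two vertex sets coincide. A complete graph with at least one vertex has a vertex adjacent to all others, so Theorem \ref{t2.7} applies and splits the problem into Case A, where $M=M_{1}\oplus M_{2}$ with $M_{1}$ simple and $M_{2}$ prime, and Case B, where $Z(R)$ is an annihilator ideal.

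In Case A I would show $M_{2}$ is simple, yielding (a). Writing $P=Ann(M_{1})$ and $Q=Ann(M_{2})$, faithfulness gives $P\cap Q=(0)$, and $Q\not\subseteq P$ (otherwise $Q=(0)$ and $M_{1}\oplus(0)$ witnesses that $M$ is a vertex); hence $R=F\times S$ with $F$ a field and $S$ a domain, and $M_{2}$ a faithful prime $S$-module. The vertex $A=M_{1}\oplus(0)$ satisfies $A\cdot(V_{1}\oplus V_{2})=M_{1}V_{1}\oplus(0)$, which equals $(0)$ iff $V_{1}=(0)$ because $M_{1}M_{1}=M_{1}$; by completeness every vertex other than $A$ has zero first coordinate. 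If $M_{2}$ were not simple, choose $(0)\neq N_{2}\subsetneq M_{2}$: then $(0)\oplus N_{2}$ and $(0)\oplus M_{2}$ are vertices (each adjacent to $A$), and completeness forces $(N_{2}:M_{2})M_{2}=(0)$, i.e. $(N_{2}:M_{2})=(0)$; but then $M_{1}\oplus N_{2}$ is adjacent to $(0)\oplus M_{2}$ and is therefore a vertex different from $A$ with nonzero first coordinate, a contradiction. Hence $M_{2}$ is simple.

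Case B is the crux, and I expect it to be the main obstacle. Writing $Z(R)=Ann(I)$, the subcase $Z(R)^{2}=(0)$ is exactly (b), so assume $Z(R)^{2}\neq(0)$. Since $Z(R)$ is an ideal consisting of zero-divisors it is prime, and because $Z(R)$ is an ideal $R$ can carry no nontrivial idempotent (in a nontrivial product the zero-divisors are not closed under addition). The difficult step is to promote this to the chain structure of (c): using completeness to bound the ideal lattice of $R$, I would show that $R$ is local with $Z(R)=aR$, $a^{3}=0$, and $a^{2}R$ its only other nonzero proper ideal, and then transport this through faithfulness to obtain $Spec(M)=Max(M)=\{aM\}$, $(N:M)=a^{2}R$ for every nonzero proper $N\neq aM$, and that every nonzero proper submodule is a vertex. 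Forcing this length-three chain from completeness alone, without an Artinian hypothesis, is the heart of the argument and parallels the delicate analysis of Case 3 in Theorem \ref{t2.14}.
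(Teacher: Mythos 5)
Your converse direction and your Case A are fine (indeed more detailed than the paper, which dismisses the converse as clear and handles Case A by noting that $(0)\times N_{2}$ and $(0)\times M_{2}$ would be non-adjacent vertices). But in the decisive subcase of Case B --- $Z(R)$ an ideal with $Z(R)^{2}\neq(0)$ --- your proposal contains no proof: you write that you ``would show'' $R$ is local with $Z(R)=aR$, $a^{3}=0$ and $a^{2}R$ its only other nonzero proper ideal, and you correctly flag this as the heart of the argument, but you never supply the argument. That is a genuine gap, not a routine verification: nothing you have established so far bounds the ideal lattice of $R$ or gives any nilpotency of $Z(R)$, and your stated plan to parallel Case 3 of Theorem \ref{t2.14} does not transfer, because that analysis starts from $p^{4}M=(0)$, which was extracted from the Artinian hypothesis via Lemma \ref{l2.13}; here no such bound is available a priori.

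The paper closes this case by a reduction you are missing: since $M$ is faithful and not a vertex, completeness of $AG(M)$ forces completeness of the annihilating-ideal graph $AG(R)$, and then the ring-level classification (\cite[Theorem 2.7]{br11}) states outright that a ring with $AG(R)$ complete and $Z(R)$ an ideal with $Z(R)^{2}\neq(0)$ is local with exactly two nonzero proper ideals $Z(R)=aR$ and $Z(R)^{2}=a^{2}R$, with $a^{3}=0$. Once that ring structure is in hand, the module-level assertions of (c) follow quickly, essentially as you describe (every nonzero proper submodule $N$ has $(N:M)\subseteq aR$, hence is a vertex; completeness and faithfulness force $(N:M)=a^{2}R$ for $N\neq aM$, whence $Spec(M)=Max(M)=\{aM\}$). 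So either cite that ring-theoretic result after justifying the passage from $AG(M)$ to $AG(R)$, or actually carry out the lattice analysis you only sketch; as written, the theorem is not proved.
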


\begin{proof}
$(\Longrightarrow)$ Assume that $AG(M)$ is a complete graph. So
$AG(R)$ is a complete graph. By Theorem \ref{t2.5}, $M=M_{1}\oplus
M_{2}$, where $M_{1}$ is a simple module and $M_{2}$ is a prime
module or $Z(R)$ is an ideal. Suppose that we have the first case.
If $M_{2}$ has a nonzero proper submodule, say $N_{2}$, then
$(0)\times M_{2}$ and $(0)\times N_{2}$ are vertices of $AG(M)$
which are not adjacent, a contradiction. Thus $M_{2}$ can not have
any nonzero proper submodule, and hence it is a simple module. Now
assume that $Z(R)$ is an ideal of $R$. So (b) holds if
$Z(R)^{2}=(0)$. Otherwise, then by \cite[Theorem 2.7]{br11}, $R$
is a local ring with exactly two nonzero proper ideals $Z(R)=aR$
and $Z(R)^{2}$ (note that $a^{3}=0$). Hence every nonzero proper
submodule $M$ is a vertex. Since $AG(M)$ is a complete graph, for
every nonzero proper submodule $N\neq aM$, we
have $(N:M)=a^{2}R$. It follows that $Spec(M)=Max(M)=\{aM\}$. \\
$(\Longleftarrow)$ This is clear.
\end{proof}

\begin{cor}\label{c2.16} Assume that $M$ is a faithful module and is
not a vertex of $AG(M)$. Then we have the following.

\begin {itemize}
\item [(a)] $AG(M)$ is a complete graph with one vertex if and
only if $M$ has only one nonzero proper submodule. \item [(b)]
$AG(M)$ is a graph with two vertices if and only if $M=M_{1}\oplus
M_{2}$, where $M_{1}$ and $M_{2}$ are simple modules or $M$ is a
module with exactly two nonzero proper submodules $Z(R)M$ and
$Z(R)^{2}M$. \item [(c)] $AG(M)$ is a graph with three vertices if
and only if $M$ has exactly three nonzero proper submodules
$m_{1}R, m_{2}R, m_{3}R$ such that $m_{3}R=m_{1}R \cap m_{2}R,
Z(R)=Ann(m_{3}), (m_{1}R)^{2} = (m_{2}R)^{2} = (m_{3}R)^{2}=(0)$,
where $0\neq m_{1}, m_{2}, m_{3}\in M$, or $\Lambda(M)^{*}=
\{Z(R)M, Z(R)^{2}M, Z(R)^{3}M\}$.
\end{itemize}

\end{cor}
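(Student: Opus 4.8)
The plan is to deduce all three items from the two classification results already in hand, Theorem~\ref{t2.15} (complete graphs) and Theorem~\ref{t2.14} (star graphs), after reducing the small graphs to a normal form. Throughout I use that, since $M$ is faithful and not a vertex of $AG(M)$, we have $Ann(M)=(0)$ and $(N:M)\neq(0)$ for every nonzero proper submodule $N$; hence every vertex is a nonzero proper submodule that annihilates some nonzero proper submodule (possibly itself, i.e. $N^{2}=(0)$), and $AG(M)=AG(M)^{*}$. The reduction is graph-theoretic: using that $AG(M)$ is connected (\cite{ah14}), a one-vertex graph is $K_{1}$, a two-vertex connected graph is $K_{2}$ (hence complete), and a three-vertex connected graph is either $K_{3}$ (complete) or the path $P_{3}=K_{1,2}$ (a star). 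So in (a), (b) and the first alternative of (c) I may invoke Theorem~\ref{t2.15}, while the second alternative of (c) is the star case of Theorem~\ref{t2.14}.

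For (a) the clean direction is $(\Leftarrow)$. If $N$ is the unique nonzero proper submodule, then $N$ and $M/N$ are simple; writing $\mathfrak{m}_{1}=Ann(N)$ and $\mathfrak{m}_{2}=Ann(M/N)=(N:M)$, if $\mathfrak{m}_{1}\neq\mathfrak{m}_{2}$ then $\mathfrak{m}_{1}+\mathfrak{m}_{2}=R$ splits $M\cong N\oplus(M/N)$ and produces a second proper submodule, a contradiction; hence $\mathfrak{m}_{1}=\mathfrak{m}_{2}=:\mathfrak{m}$. Faithfulness excludes $\mathfrak{m}M=(0)$, so $\mathfrak{m}M=N$, whence $N^{2}=(N:M)^{2}M=\mathfrak{m}(\mathfrak{m}M)=\mathfrak{m}N=(0)$; thus $N$ is the unique vertex and $AG(M)=K_{1}$. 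For $(\Rightarrow)$, $K_{1}$ is vacuously complete, so Theorem~\ref{t2.15} applies: type (a) and, as explained below, type (c) each yield two vertices, leaving type (b) with $Z(R)^{2}=(0)$; there every nonzero proper submodule $L$ has $(L:M)\subseteq Z(R)$ and so $L^{2}=(0)$, making $L$ a vertex, so a single vertex forces a single nonzero proper submodule.

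For (b) and the complete three-vertex case of (c) I run through the three types of Theorem~\ref{t2.15} and count annihilating submodules, faithfulness being used repeatedly to exclude degenerate lattices. The decisive observation is that every type-(c) module is uniserial: passing to $M/Z(R)^{2}M$ over $R/Z(R)^{2}$, the hypothesis $(N:M)=Z(R)^{2}$ for all $N\neq Z(R)M$ makes $Z(R)M/Z(R)^{2}M$ the unique maximal submodule, so $M/Z(R)^{2}M$ is cyclic, $Z(R)^{2}M$ is simple, and $\Lambda(M)^{*}=\{Z(R)M,Z(R)^{2}M\}$ has exactly two members. Hence type (c) contributes only the two-vertex chain of (b) and never a three-vertex graph. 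Type (a) gives the two summands $M_{1},M_{2}$, where $M_{1}\not\cong M_{2}$ (otherwise $R$ acts through a field and the resulting diagonal submodules have colon $(0)$, making $M$ a vertex); this is the first alternative of (b). Finally a complete $K_{3}$ can come only from the square-zero type (b): there the three vertices are cyclic with square zero, and identifying the unique minimal one as $m_{3}R=m_{1}R\cap m_{2}R$ with $Z(R)=Ann(m_{3})$ yields the first alternative of (c).

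The star part of (c) is the path $P_{3}=K_{1,2}$, which is exactly case (c) of Theorem~\ref{t2.14}; it yields the chain $Z(R)M\supsetneq Z(R)^{2}M\supsetneq Z(R)^{3}M\neq(0)$ with centre the smallest term $Z(R)^{3}M$ (the adjacencies being $Z(R)M-Z(R)^{3}M-Z(R)^{2}M$). The hard part, and the reason this item needs care, is that Theorem~\ref{t2.14} is stated for $R$ Artinian with $Ann(M)$ nil, whereas here only $M$ faithful is assumed; so I must first extract from the three-vertex hypothesis that $Z(R)=aR$ is principal with $a^{4}=0$ (faithfulness turning $a^{4}M=(0)$ into $a^{4}=0$), which makes $R$ a Noetherian local ring with nilpotent maximal ideal, hence Artinian, so that Theorem~\ref{t2.14} legitimately applies. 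The remaining labour in all three items is the same bookkeeping: verifying that in the surviving type \emph{every} nonzero proper submodule is a vertex, so that the vertex count equals the number of nonzero proper submodules, and then reading off the stated colon- and annihilator-conditions. Each converse is immediate, being a single product computation such as $(Z(R)M)(Z(R)^{2}M)=Z(R)^{3}M=(0)$ or $M_{1}M_{2}=(0)$.
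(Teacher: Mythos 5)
Your overall strategy differs from the paper's in one decisive place, and that is where the gap lies. The paper's proof of all three items begins by invoking \cite[Theorem 3.6 and Proposition 3.5]{ah14} to convert ``$AG(M)$ has $n$ vertices'' into ``$M$ has exactly $n$ nonzero proper submodules,'' and only \emph{then} applies Theorems \ref{t2.15} and \ref{t2.14}. You reverse the order and try to recover the count from the classification itself, via the claim that in each surviving type every nonzero proper submodule is a vertex. For type (b) of Theorem \ref{t2.15} your justification of this --- ``every nonzero proper submodule $L$ has $(L:M)\subseteq Z(R)$ and so $L^{2}=(0)$'' --- is false. Take $R=M=\mathbb{Z}[x]/(x^{2})$: here $M$ is faithful and not a vertex, $Z(R)=(x)$ is an ideal with $Z(R)^{2}=(0)$, and $AG(M)$ is complete (its vertices are exactly the nonzero ideals contained in $(x)$), so this is squarely type (b); yet $L=(2)$ is a nonzero proper submodule with $(L:M)=(2)\not\subseteq Z(R)$ and $L$ is not a vertex at all. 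Since the type-(b) branch is exactly the one that must produce the three-vertex configuration $m_{1}R,m_{2}R,m_{3}R$ of item (c) (and must be excluded from producing a two-vertex graph in item (b), as $Z(R)^{2}M=(0)$ there is incompatible with both listed alternatives), you cannot get the equality of vertex count and submodule count without some substitute for the cited results of \cite{ah14}. Your direct argument for the backward direction of (a) is fine and is a nice alternative to the paper's $M\cong R$ reduction, and your observation that type (c) of Theorem \ref{t2.15} collapses to the two-term chain $\{Z(R)M,Z(R)^{2}M\}$ is essentially correct (though the cyclicity of $M$ needs the Nakayama-style computation $M=Rx+a^{2}Rx=Rx$ spelled out).

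A second, smaller gap is in item (c): even granting three nonzero proper submodules in the square-zero case, you assert without argument that the lattice consists of two incomparable submodules over a common minimal one. The paper devotes five separate cases to this, ruling out a chain $N_{1}\subset N_{2}\subset N_{3}$ (which forces $M\cong R$ and contradicts \cite[Corollary 2.9]{br11}), three pairwise incomparable minimals, and the two mixed configurations, before arriving at Case 5, which is the stated answer. ``Identifying the unique minimal one as $m_{3}R=m_{1}R\cap m_{2}R$'' presupposes the conclusion of that case analysis rather than proving it.
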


\begin{proof}
(a) $(\Longrightarrow)$ This follows by \cite[Theorem 3.6 and
Proposition 3.5]{ah14}.\\ $(\Longleftarrow)$ Suppose $M$ has only
one nonzero
 proper submodule. It follows that $M\cong R$ and hence
 \cite[Corollary 2.9(a)]{br11} completes the proof.\\
(b) $(\Longrightarrow)$ Suppose $AG(M)$ is a graph with two
vertices. By \cite[Theorem 3.6 and Proposition 3.5]{ah14}, $M$ has
exactly two nonzero proper submodules. Since $AG(M)$ is connected,
then $AG(M)$ is a complete (or star) graph. Thus by Theorem
\ref{t2.15} and Theorem \ref{t2.14}, $M=M_{1}\oplus M_{2}$, where
$M_{1}$ and $M_{2}$ are simple modules or $M$ is a module with
exactly two nonzero proper submodules $Z(R)M$
and $Z(R)^{2}M$. \\ $(\Longleftarrow)$ This is clear.\\
(c) $(\Longrightarrow)$ Suppose $AG(M)$ is a graph with three
vertices. By \cite[Theorem 3.6 and Proposition 3.5]{ah14}, $M$ has
exactly three nonzero proper submodules. Since $AG(M)$ is
connected, either it is a complete (or star) graph. If $AG(M)$ is
a complete graph, then we may have the cases (b) and (c) in
Theorem \ref{t2.15}. First we assume that the case (b) in this
theorem is true. Then $Z(R)$ is an ideal of $R$ with
$Z(R)^{2}=(0)$. It follows that $R$ is an Artinian ring and
$Z(R)=Nil(R)=Ann(m)$ is the only prime ideal of $R$, where $Rm$ is
a minimal submodule of $M$. Let $N_{1}$, $N_{2}$, and $N_{3}$ be
the only nonzero proper submodules
of $M$. We proceed by the following cases:\\\\
Case 1. $N_{1}\subset N_{2}\subset N_{3}$. Then we have $M$ and
$R$ are
isomorphic which is a contradiction by \cite[Corollary
2.9]{br11}.\\\\
Case 2. $N_{1}$, $N_{2}$, and $N_{3}$ are minimal submodules of
$M$. In this case, $M$ is a multiplication cyclic module. But this
yields a contradiction.\\\\
Case 3. $N_{1}\subset N_{2}$, and $N_{3}$ is not comparable with
$N_{i}$, $i=1,2$. Then since $(N_{2}:M)=(N_{3}:M)=Z(R)$, it
follows
that $N_{1}= N_{3}$ or $N_{2}= N_{3}$, a contradiction.\\\\
Case 4. $N_{1}\subset N_{2}$, and $N_{3}\subset N_{2}$. Then we
have $(N_{2}:M)=Z(R)$. If $M$ is a multiplication module, then it
is cyclic and hence similar to the case (1), we get a
contradiction. Otherwise, $N_{1}\subset N_{3}$ or $N_{3}\subset
N_{1}$,
which is again a contradiction.\\\\
Case 5. $N_{3}\subset N_{1}$ and $N_{3}\subset N_{2}$. It follows
that $N_{1}=Rm_{1}$, $N_{2}=Rm_{2}$, $N_{3}=Rm_{3}=N_{1}\cap
N_{2}$, $Z(R)=Ann(m_{3})$, and $N^{2}_{1}
=N^{2}_{2}=N^{2}_{3}=(0)$, as desired.

Now suppose that the case (c) in Theorem \ref{t2.15} is true. Then
we have $\Lambda(M)^{*}= \{aM, a^{2}M, N\}$ such that
$Spec(M)=Max(M)=\{aM\}$, where $Z(R)=aR$ and $(N:M)=a^{2}R$. It
follows that $N\subseteq aM$ and hence similar to case (4), we get
again a contradiction. Finally suppose that $AG(M)$ is a star
graph with three vertices. Then we may have the cases (b) and (c)
in Theorem \ref{t2.15}. We prove that the case (b) in this theorem
is not true. Otherwise, $Z(R)^{3}M=(0)$ and $Z(R)^{2}M$ is the
only minimal submodule of $M$. Let $N$ be a nonzero proper
submodule of $M$ and $N\neq pM, p^{2}M$. If $N$ is a maximal
submodule of $M$, then $p^{2}M\subset pM\subset N$ and hence
similar to case (1), we have a contradiction. Otherwise,
$p^{2}M\subset N\subset pM$, which is again a contradiction. Thus
$M$ has exactly three nonzero proper
submodules $Z(R)M$, $Z(R)^{2}M$, and $Z(R)^{3}M$.\\
$(\Longleftarrow)$ This is clear.

\end{proof}

\begin{thm}\label{t2.17} Suppose that $R$ is not a domain and $M$ is a
 faithful module. If every vertex of $AG(M)$ is
a prime submodule of $M$, then either $M=M_{1}\oplus M_{2}$, where
$M_{1}$ and $M_{2}$ are simple modules or M has only one nonzero
proper submodules.
\end{thm}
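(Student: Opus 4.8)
The plan is to split on whether $R$ admits a nontrivial idempotent, after first recording two reductions that flow purely from the hypothesis that every vertex is prime. First I would note that $M$ itself cannot be a vertex of $AG(M)$, since a prime submodule is by definition proper, so $M\in V(AG(M))$ would contradict the hypothesis. As $M$ is faithful, $Ann(M)=(0)$, and the criterion recalled in the introduction says $M$ is a vertex precisely when some nonzero proper submodule $K$ satisfies $(K:M)=Ann(M)=(0)$; hence every nonzero proper submodule $L$ has $(L:M)\neq(0)$, and in particular every vertex has a nonzero (prime) colon ideal. I would also check that the hypothesis is not vacuous: writing $ab=0$ with $a,b\neq 0$ (possible since $R$ is not a domain) and $N_a=(0:_M a)$, faithfulness gives $bM\subseteq N_a$ with $bM\neq(0)$ and $aM\neq(0)$, while a short computation (for $r\in(N_a:M)=Ann_R(a)$ and $s\in(aM:M)$ one has $rsM\subseteq r(aM)=(ra)M=(0)$) shows $N_a\cdot aM=(0)$. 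The same relation forces $aM\neq M$ (otherwise $(N_a:M)M=(0)$, so $(N_a:M)\subseteq Ann(M)=(0)$, contradicting $b\in Ann_R(a)=(N_a:M)$), so $N_a$ and $aM$ are nonzero proper, hence prime, vertices.

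In the case where $R$ has a nontrivial idempotent $e$, I would use Proposition \ref{p2.1} to write $M=M_1\oplus M_2$ with $M_1=eM$, $M_2=(1-e)M$ both nonzero (faithfulness) and proper. A direct check gives $(M_1:M)=eR$ and $(M_2:M)=(1-e)R$, so $M_1M_2=e(1-e)RM=(0)$, and $M_1,M_2$ are adjacent vertices, hence prime. I would then show each summand is simple. Suppose $(0)\neq L\subsetneq M_2$; viewing $L$ as $(0)\times L$, Proposition \ref{p2.1}(d) gives $(M_1\times(0))\cdot((0)\times L)=(0)$, so $(0)\times L$ is a vertex and therefore prime. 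But $M_1=eM$ is faithful over $R_1=eR$, whence $((0)\times L:M)=(0)\times(L:_{R_2}M_2)$, an ideal of $R_1\times R_2$ that is neither of the form $\mathfrak p_1\times R_2$ nor $R_1\times\mathfrak p_2$ (its first coordinate is $(0)\neq R_1$, and since $L\neq M_2$ its second coordinate is not $R_2$); thus it is not prime, a contradiction. Hence $M_2$, and symmetrically $M_1$, is simple, giving $M=M_1\oplus M_2$ with both summands simple.

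For the case where $R$ has no nontrivial idempotent I would show $M$ has a unique nonzero proper submodule. The engine is a power trick built on $N_a$ and $aM$: writing $p=(aM:M)$ and $q=(N_a:M)=Ann_R(a)$ we have $pq=(0)$ with $p,q$ nonzero primes, and one checks $(bM:M)\subseteq q$ (if $rM\subseteq bM$ then $arM\subseteq abM=(0)$, so $ar=0$) and $(a^kM:M)\subseteq p$ for all $k\geq 1$. Consequently $a^2M\cdot bM\subseteq pq\,M=(0)$, so whenever $a^2\neq 0$ the nonzero proper submodule $a^2M$ is a vertex, hence prime; but if $a^2M\subsetneq aM$ then $a\cdot a=a^2\in(a^2M:M)$ while $a\notin(a^2M:M)$, contradicting primeness. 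This forces, for every zero divisor $a$, either $a^2=0$ or $a^2M=aM$, and more generally it rules out any strict descent in the powers of such colon ideals. I would then iterate the same mechanism on an arbitrary nonzero proper submodule $L$ (using $(L:M)\neq(0)$, and the fact that a proper submodule strictly between two others, or a second minimal submodule, would again produce a vertex whose colon ideal contains a product but not its factors) to collapse the submodule lattice to $(0)\subset N_a\subset M$ with $N_a=aM$.

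I expect this last paragraph to be the main obstacle. The clean, self-annihilating case $a^2=0$ rapidly yields the prime square-zero submodule $N_a$, but ruling out the residual possibility $aM=a^2M$ (equivalently $M=aM+N_a$), and, without any finiteness or Artinian hypothesis, genuinely proving that no further proper submodule can survive, is the delicate part. The argument must manufacture, from any hypothetical extra submodule, either a nontrivial idempotent (excluded in this case) or a vertex $N$ with $(N:M)$ non-prime (contradicting the standing hypothesis); making that dichotomy exhaustive in the general, non-finite setting is where the real work lies.
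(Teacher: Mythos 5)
Your opening reductions and your idempotent case are sound (the computation $(eM:M)=eR$, $(\,(1-e)M:M)=(1-e)R$, and the observation that $(0)\times L$ would be a vertex whose colon ideal $(0)\times(L:_{R_2}M_2)$ is not prime in $R_1\times R_2$, all check out). But the proposal as written has a genuine, and self-acknowledged, gap: the entire no-idempotent case is not proved. You correctly isolate the two danger points --- the residual possibility $a^2\neq 0$ with $aM=a^2M$, and the absence of any finiteness hypothesis with which to collapse the submodule lattice --- and then stop. That case is the bulk of the theorem, so the argument is incomplete.

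The paper closes exactly these two holes, and it does so by splitting on a different dichotomy: whether some $x\in Z(R)$ has $x^2\neq 0$, rather than whether $R$ has an idempotent. In the first case, primeness of $x^2M$ forces $xM=x^2M$ (from $(xM:M)(xM)\subseteq x^2M$), and then a short argument shows $xM$ is a \emph{minimal} submodule: for $(0)\neq N\leq xM$ and $m\in M$, write $xm=x^2m'$, note $x(m-xm')=0\in N$ with $m-xm'\neq 0$, and use primeness of $N$ to get $x\in(N:M)$, hence $N=xM$. Since $(xM)^2\supseteq x^2M\neq(0)$, Lemma \ref{l2.4} now manufactures the idempotent for you ($xM=eM$), which is precisely the tool you were missing to kill the $aM=a^2M$ branch inside your no-idempotent case. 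In the second case ($x^2=0$ for all $x\in Z(R)$), the paper first shows $xM=yM$ for all nonzero $x,y\in Z(R)$ (otherwise $x(xm)=0\in yM$ and $y(ym')=0\in xM$ give mutual containment by primeness), whence $Z(R)^2=(0)$, $AG(M)$ is complete, $xM=Rm$ is the unique minimal submodule, and $Z(R)=Ann(m)$ is the unique prime ideal of $R$, so \emph{every} nonzero proper submodule is a vertex and hence prime. The lattice collapse then needs no Noetherian hypothesis: any $N\supsetneq xM$ contains some $m'\notin xM$, and either $Ann(m')=(0)$, giving $R\cong Rm'$ and a reduction to Theorem \ref{t2.14}, or $Rm'$ is a second minimal submodule, forcing $Rm'=xM$, a contradiction. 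If you re-organize your proof around the paper's dichotomy on squares of zero-divisors (so that Lemma \ref{l2.4} does the idempotent production for you instead of the idempotent being a case hypothesis), the pieces you already have assemble into a complete argument.
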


\begin{proof}
Case 1. Let $x\in Z(R)$ and $x^{2}\neq 0$. It follows that $xM$
and $x^{2}M$ are vertices of $AG(M)$. We have $(xM:M)(xM)\subseteq
x^{2}M$. This implies that $xM\subseteq x^{2}M$ or
$(xM:M)\subseteq (x^{2}M:M)$ so that $xM=x^{2}M$. Let $N$ be a
nonzero proper submodule of $M$ with $N\leq xM$ and let $m\in M$.
Since $xm\in x^{2}M$, there exists $m'\in M$ such that $xm=
x^{2}m'$. Since $M\neq xM$, hence $m-xm'\neq 0$. Thus we have
$x(m-xm')=0\in N$. Again, since $M\neq xM$, $x\in (N:M)$. It
follows that $N=xM$ and hence $xM$ is a minimal submodule of $M$.
By Lemma \ref{l2.4}, we have $N=eM$ for some idempotent $e\in R$.
Now we show that $(1-e)M$ is a minimal submodule of $M$. Let
$0\neq K\subset (1-e)M$. Then there exists $m\in M$ such that
$m(1-e)\notin K$. We have $e(m(1-e))\in K$. So $e\in (K:M)$. It
follows that $e^{2}M=0$, a contradiction. This implies that
$M=M_{1}\oplus M_{2}$, where $M_{1}$ and $M_{2}$ are simple
modules.\\\\
Case 2. Assume that $x^{2}=0$ for every $0\neq x\in Z(R)$. At
first, we show that for every $x, y\in Z(R)\setminus \{0\}$, $xM=
yM$. Otherwise, there exists $m, m'\in M$ such that $xm\notin yM$
and $ym'\notin xM$. We have $x(xm)=0\in yM$ and $y(ym')=0\in xM$.
It follows that $xM\subseteq yM$ and $yM\subseteq xM$, a
contradiction. Hence $xy=0$. It implies that for every vertex $N$
and $K$ of $AG(M)$, $NK=(0)$. Therefore $AG(M)$ is a complete
graph. One can see that for every $0\neq x\in Z(R)$, $xM$ is a
minimal submodule of $M$ and hence there exists $0\neq m\in M$
such that $xM=Rm$. This case and $Z(R)=Nil(R)$ yield that
$Z(R)=Ann(m)$ is a unique prime ideal of $R$. So every nonzero
proper submodule of $M$ is a vertex. Now, if $xM$ is the only
nonzero proper submodule of $M$, then there is nothing to prove.
Otherwise, let $N$ be a nonzero proper submodule of $M$ such that
$xM\subset N$. Thus there exists $m\in N$ such that $m\notin xM$.
If $Ann(m)=(0)$, then $R\cong Rm$. So every nonzero proper ideal
of $R$ is a prime ideal and hence $R$ has only one nonzero proper
ideal. Now the result follows from Theorem \ref{t2.14}. If
$Ann(m)\neq (0)$, then $Rm$ is a minimal submodule of $M$. Since
$xM\subseteq Rm$ ($xM=(xM:M)M=Z(R)M=(Rm:M)M$), we have $xM=Rm$, a
contradiction, as desired.
\end{proof}

\begin{cor}\label{c2.18} Assume that $R$ is not a domain and $M$ is a
 faithful module. Then we have the following.
\begin{itemize}
\item [(a)] $V(AG(M))\subseteq Max(M)$, i.e., every vertex of
$AG(M)$ is a maximal submodule of $M$. \item [(b)] $V(AG(M))=
Max(M)$ \item [(c)] $V(AG(M))=Spec(M)$. \item [(d)]
$V(AG(M))\subseteq Spec(M)$. \item [(e)] Either $M=M_{1}\oplus
M_{2}$, where $M_{1}$ and $M_{2}$ are simple modules or $M$ has
only one nonzero proper submodule.
\end{itemize}

\end{cor}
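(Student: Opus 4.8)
The plan is to prove the chain of equivalences by combining the free containment implications with Theorem \ref{t2.17} and a direct structural analysis of the two module types in (e). First I would record the trivial implications: since every maximal submodule is prime we have $Max(M)\subseteq Spec(M)$, so (b) $\Rightarrow$ (a) $\Rightarrow$ (d) and (c) $\Rightarrow$ (d) are immediate. The implication (d) $\Rightarrow$ (e) is exactly Theorem \ref{t2.17}, whose hypothesis ``every vertex of $AG(M)$ is a prime submodule'' is precisely the assertion $V(AG(M))\subseteq Spec(M)$. Thus the entire content of the corollary reduces to showing that (e) implies (b) and (c); combined with the above this closes the cycles (b) $\Rightarrow$ (a) $\Rightarrow$ (d) $\Rightarrow$ (e) $\Rightarrow$ (b) and (e) $\Rightarrow$ (c) $\Rightarrow$ (d) $\Rightarrow$ (e), making all five conditions equivalent.

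For (e) $\Rightarrow$ (b),(c) I would split along the two cases of (e). In the first case $M=M_{1}\oplus M_{2}$ with $M_{1},M_{2}$ simple. Because $M$ is faithful, $Ann(M_{1})\cap Ann(M_{2})=Ann(M)=(0)$ with both $Ann(M_{i})$ maximal; since $R$ is not a domain these two maximal ideals cannot coincide (equality would force $Ann(M)$ maximal, hence $R$ a field and thus a domain), so they are comaximal and $R=R_{1}\times R_{2}$ splits as in Lemma \ref{l2.2}. By Proposition \ref{p2.1}(c) every submodule of $M$ is a product $N_{1}\times N_{2}$, and simplicity forces each $N_{i}\in\{(0),M_{i}\}$; hence the only nonzero proper submodules are $M_{1}\times(0)$ and $(0)\times M_{2}$. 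A direct computation gives $(M_{1}\times(0))((0)\times M_{2})=(0)$, so both are vertices, and each has simple quotient and is therefore maximal. Checking that $(0)$ is not prime (since $R$ is not a domain, $M$ is not a prime module), I conclude $V(AG(M))=Max(M)=Spec(M)$.

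In the second case $M$ has a unique nonzero proper submodule $N$. As in the proof of Corollary \ref{c2.16}(a), since $M$ is faithful and (being generated by any element outside $N$) cyclic, this forces $M\cong R$ with $R$ possessing a unique nonzero proper ideal $\mathfrak{m}$. Now $\mathfrak{m}^{2}$ is an ideal inside $\mathfrak{m}$, so either $\mathfrak{m}^{2}=\mathfrak{m}$, excluded by Nakayama because $R$ is Artinian local, or $\mathfrak{m}^{2}=(0)$; thus $N^{2}=(0)$, so $N$ is a vertex and $AG(M)$ consists of the single vertex $N$. The quotient $M/N$ is a field, so $N$ is maximal, while $(0)$ again fails to be prime because $R$ is not a domain. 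Hence $V(AG(M))=Max(M)=Spec(M)=\{N\}$, establishing (b) and (c).

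I expect the main obstacle to be the first case: correctly deducing the splitting $R=R_{1}\times R_{2}$ from faithfulness together with the hypothesis that $R$ is not a domain, and then using this to enumerate all submodules and verify that no further prime submodule intrudes (in particular that $(0)\notin Spec(M)$). The reductions among (a)--(d) and the single-submodule case are routine once the structure theorems (Theorem \ref{t2.17}, Lemma \ref{l2.2}, Proposition \ref{p2.1}) are in hand.
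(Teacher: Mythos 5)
Your proof is correct and follows exactly the route the paper intends: the paper's own ``proof'' of Corollary \ref{c2.18} is literally ``This is clear,'' leaving the reader to close the cycle of implications via Theorem \ref{t2.17}, which is precisely what you do (correctly reading the list (a)--(e) as an equivalence, using Theorem \ref{t2.17} for (d) $\Rightarrow$ (e), and supplying the omitted verification that each of the two module types in (e) has $V(AG(M))=Max(M)=Spec(M)$). Your filled-in details --- the comaximality of $Ann(M_{1})$ and $Ann(M_{2})$ forced by faithfulness and $R$ not being a domain, the reduction $M\cong R$ in the unique-submodule case, and the check that $(0)\notin Spec(M)$ --- are all sound.
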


\begin{proof}
This is clear.
\end{proof}

\section{Coloring of the annihilating-submodule graphs}
We recall that $M$ is an $R$-module.\\
The purpose of this section is to study of coloring of the
annihilating-submodule graphs of modules and investigate the
interplay between $\chi(AG(M))$ and $cl(AG(M))$.\\

\begin{prop}\label{p3.1} Let $M$ be a faithful module.
Then $\chi(AG(M))=1$ if and only if $M$ has only one nonzero
proper submodule.
\end{prop}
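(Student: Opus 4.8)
The plan is to read the hypothesis $\chi(AG(M))=1$ as the purely graph-theoretic statement that $AG(M)$ is nonempty and has no edges, since the graphs of chromatic number $1$ are exactly the edgeless graphs on a nonempty vertex set. I would then route both implications through Corollary \ref{c2.16}(a), which already characterizes when $AG(M)$ is a complete graph with a single vertex in terms of $M$ having a unique nonzero proper submodule. Because that corollary carries the standing hypotheses that $M$ is faithful (given here) and that $M$ is \emph{not} a vertex of $AG(M)$, the recurring preliminary task in each direction is to verify that $M$ itself is not a vertex.

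For the forward implication I would first rule out $M$ being a vertex. If $M$ were a vertex, then by the observation in the Introduction every nonzero submodule of $M$ is a vertex and there is a nonzero proper submodule $N$ with $(N:M)=Ann(M)=(0)$; but then $MN=(M:M)(N:M)M=(N:M)M=(0)$, so $M$ and $N$ are two distinct adjacent vertices, contradicting the absence of edges. With $M$ excluded as a vertex, I would invoke the connectedness of $AG(M)$ (see \cite{ah14}): a connected edgeless graph with at least one vertex has exactly one vertex, so $AG(M)$ is the complete graph $K_{1}$. Corollary \ref{c2.16}(a) then gives that $M$ has only one nonzero proper submodule.

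For the converse, suppose $M$ has a unique nonzero proper submodule $N$. Then $N$ is simultaneously the unique minimal and the unique maximal submodule, so both $N$ and $M/N$ are simple and $(N:M)=Ann(M/N)$ is a maximal ideal $\mathfrak{m}$. I would again check that $M$ is not a vertex: $M$ is a vertex precisely when $(N:M)=(0)$, i.e. $\mathfrak{m}=(0)$, which forces $R$ to be a field; but then $M$ would be a faithful vector space, and no vector space has exactly one nonzero proper subspace, a contradiction. Hence $M$ is not a vertex, and Corollary \ref{c2.16}(a) gives that $AG(M)$ is $K_{1}$, so $\chi(AG(M))=1$. (Alternatively one shows directly that $N^{2}=(0)$: from $Ann(N)\,Ann(M/N)\subseteq Ann(M)=(0)$, the possibility $\mathfrak{m}\neq Ann(N)$ would make these two maximal ideals comaximal and split $R$ as a product of two fields, giving $M$ two nonzero proper submodules; so $\mathfrak{m}=Ann(N)$ and $N^{2}=\mathfrak{m}^{2}M\subseteq \mathfrak{m}N=(0)$, whence $N$ is the one and only vertex.)

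The computations here are light; the delicate points are entirely bookkeeping. One must respect the chromatic-number convention for edgeless graphs, one must exclude $M$ from being a vertex both to legitimately apply Corollary \ref{c2.16} and to avoid the spurious edge $M-N$, and one must discard the degenerate field case in the converse. I therefore expect the main obstacle to be ensuring that the hypotheses of Corollary \ref{c2.16}(a) are genuinely in force; once $M$ is shown not to be a vertex and the connectedness of $AG(M)$ is invoked, both directions close at once.
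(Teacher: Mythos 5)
Your proof is correct and follows essentially the same route as the paper's: $\chi(AG(M))=1$ together with the connectedness of $AG(M)$ forces the graph to have exactly one vertex, and Corollary \ref{c2.16}(a) translates this into $M$ having a unique nonzero proper submodule. You are in fact more careful than the paper, whose proof applies Corollary \ref{c2.16}(a) without checking its standing hypothesis that $M$ is not a vertex of $AG(M)$; your verifications of that point in both directions (and the alternative direct argument that $N^{2}=(0)$) are valid.
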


\begin{proof}
Let $\chi(AG(M))=1$. Since $AG(M)$ is a connected graph, it can
not have more than one vertex. If $M$ is a faithful module, then
by Corollary \ref{c2.16}(a), $AG(M)$ is a graph with one vertex if
and only if $M$ has only one nonzero proper submodule.
\end{proof}

\begin{thm}\label{t3.2} Let $M$ be a faithful module.
Then the following statements are equivalent.

\begin{itemize}
\item [(a)] $\chi(AG(M)^{*})=2$. \item [(b)] $AG(M)^{*}$ is a
bipartite graph with two nonempty parts. \item [(c)] $AG(M)^{*}$
is a complete bipartite graph with two nonempty parts. \item [(d)]
Either $R$ is a reduced ring with exactly two minimal prime ideals
or $AG(M)^{*}$ is a star graph with more than one vertex.
\end{itemize}

\end{thm}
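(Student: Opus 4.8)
The plan is to establish the cycle of implications $(a)\Rightarrow(b)\Rightarrow(d)\Rightarrow(c)\Rightarrow(a)$, so that the two soft graph-theoretic implications bracket the genuinely algebraic core $(b)\Rightarrow(d)$. The implication $(a)\Rightarrow(b)$ is immediate: a proper $2$-colouring splits the vertices into two independent colour classes, and since $\chi(AG(M)^{*})=2>1$ forces at least one edge, both classes are nonempty, giving a bipartite graph with two nonempty parts. Symmetrically, $(c)\Rightarrow(a)$ is recorded at the end because a complete bipartite graph with two nonempty parts is properly $2$-colourable and has an edge, so its chromatic number is exactly $2$.

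Before the main step I would first record the decisive consequence of faithfulness: since $Ann(M)=(0)$, for submodules $N,K$ we have $NK=(N:M)(K:M)M=(0)$ if and only if $(N:M)(K:M)=(0)$, so adjacency in $AG(M)^{*}$ is controlled entirely by the annihilator ideals $(N:M)$. I would also use, from \cite{ah14}, that $AG(M)^{*}$ is connected, together with the observation extracted from the proof of Theorem \ref{t2.7}: if a vertex $N$ satisfies $N^{2}=(0)$ (equivalently $(N:M)^{2}=(0)$), then $Z(R)=Ann((N:M))$, whence every vertex $K$, having $(K:M)\subseteq Z(R)$, satisfies $(K:M)(N:M)=(0)$ and so is adjacent to $N$; that is, $N$ is a centre of the graph.

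The heart of the proof is $(b)\Rightarrow(d)$, and here I would split on whether $AG(M)^{*}$ possesses a centre. If some vertex is adjacent to all others, then triangle-freeness (a consequence of bipartiteness) forbids any edge among the remaining vertices, so $AG(M)^{*}$ is a star; as a bipartite graph with two nonempty parts has an edge, this star has more than one vertex, which is the second alternative of $(d)$. If there is no centre, then by the remark above no vertex squares to zero, i.e. $(N:M)^{2}\neq(0)$ for every vertex $N$, and I would leverage this to show that $R$ is reduced and then that it has exactly two minimal primes. The main obstacle lies precisely here: promoting ``no square-zero vertex'' to $\mathrm{Nil}(R)=(0)$, and then converting the absence of odd cycles into the statement that $Z(R)$ is the union of exactly two minimal primes $P_{1},P_{2}$ with $P_{1}P_{2}=(0)$. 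I expect to carry this out by transporting the configuration to the annihilating-ideal graph $AG(R)$ via $N\mapsto(N:M)$ and invoking the reduced-ring analysis of \cite[Theorem 2.7]{br11}, taking care that this correspondence is only one-directional and so must be fed with $IM$-type submodules when passing back from ideals to $M$.

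Finally $(d)\Rightarrow(c)$ splits into the two cases of $(d)$. In the star case there is nothing to prove, a star $K_{1,m}$ being complete bipartite. In the reduced case with minimal primes $P_{1},P_{2}$ we have $P_{1}\cap P_{2}=\mathrm{Nil}(R)=(0)$ and hence $P_{1}P_{2}\subseteq P_{1}\cap P_{2}=(0)$; every vertex $N$ has $(0)\neq(N:M)$ with nonzero annihilator, so in a reduced ring $(N:M)$ lies in some minimal prime and, since $P_{1}\cap P_{2}=(0)$, in exactly one of $P_{1},P_{2}$. Colouring $N$ by that prime and noting that $(N:M)\subseteq P_{1}$ and $(K:M)\subseteq P_{2}$ force $(N:M)(K:M)\subseteq P_{1}P_{2}=(0)$ exhibits $AG(M)^{*}$ as complete bipartite; here one still checks, using faithfulness to produce vertices inside each $P_{i}$, that both parts are nonempty, which closes the cycle.
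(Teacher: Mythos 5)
Your overall architecture matches the paper's: the two graph-theoretic implications are routine, and everything hinges on $(b)\Rightarrow(d)$, which the paper settles in one stroke by observing (using faithfulness, exactly as you do) that $N\mapsto (N:M)$ and $I\mapsto IM$ carry a bipartition of $AG(M)^{*}$ to a bipartition of $AG(R)$ and back, and then quoting the ring-level dichotomy of \cite[Theorem 2.3]{br12}: $AG(R)$ is bipartite with two nonempty parts if and only if $R$ is reduced with exactly two minimal primes or $AG(R)$ is a star. That citation is the entire content of the hard step, and it is precisely the statement you flag as ``the main obstacle'' and leave unresolved; the reference you propose instead, \cite[Theorem 2.7]{br11}, characterizes when $AG(R)$ is complete and will not produce the two-minimal-primes alternative. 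So as written the decisive implication is announced rather than proved.

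There is also a concrete error in the lemma you set up to drive your case split. You claim that a vertex $N$ with $N^{2}=(0)$ is automatically adjacent to every other vertex, ``extracting'' this from the proof of Theorem \ref{t2.7}; but there the implication runs the other way: $N$ is assumed to be a centre, and only then does $N^{2}=(0)$ force $Z(R)=Ann((N:M))$. The unconditional converse fails: take $M=R=k[x,y]/(x^{2},xy^{2})$ and $N=(x)$; then $N^{2}=(0)$, yet $(y)$ is a vertex (it annihilates $(xy)\neq (0)$) that is not adjacent to $(x)$, since $xy\neq 0$. Whether bipartiteness rescues the claim is exactly what is at stake in $(b)\Rightarrow(d)$, so it cannot be assumed; hence your dichotomy ``a centre exists'' versus ``no vertex squares to zero'' is not justified, and the second branch starts from a premise that has not been established. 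On the positive side, your $(d)\Rightarrow(c)$ argument in the reduced case is sound and more self-contained than the paper's second appeal to \cite[Theorem 2.3]{br12}: prime avoidance places each nonzero $(N:M)\subseteq Z(R)=P_{1}\cup P_{2}$ in exactly one $P_{i}$, and $P_{1}P_{2}\subseteq P_{1}\cap P_{2}=(0)$ gives all cross edges; you should just add the check that two vertices in the same part cannot be adjacent (if $(N:M),(K:M)\subseteq P_{1}$ had zero product, that product lies in $P_{2}$, forcing one of them into $P_{1}\cap P_{2}=(0)$, a contradiction).
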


\begin{proof}
$(a)\Longleftrightarrow (b)$ and $(c)\Longrightarrow (b)$ are
clear.\\
$(b)\Longrightarrow (d)$ Suppose that $AG(M)^{*}$ is a bipartite
graph with two nonempty parts $V_{1}$ and $V_{2}$. One can see
that $AG(M)^{*}$ is a bipartite graph with two nonempty parts
$V_{1}$ and $V_{2}$ if and only if $AG(R)$ is a bipartite graph
with two nonempty parts $U_{1}$ and $U_{2}$ such that if $N\in
V_{i}$, then $(N:M)\in U_{i}$ and if
 $I\in U_{i}$, then $IM\in V_{i}$, for $i=1, 2$.
Hence by \cite[Theorem 2.3]{br12}, $R$ is a reduced ring with
exactly two minimal prime ideals $p_{1}$ and $p_{2}$ or $AG(R)$ is
a star graph with more than one vertex. If $R$ is a reduced ring
with exactly two minimal prime ideals $p_{1}$ and $p_{2}$, then
there is nothing to prove. If $AG(R)$ is a star graph with more
than one vertex,
then $AG(M)^{*}$ is a star graph with more than one vertex.\\
$(d)\Longrightarrow (c)$ Assume that $R$ is a reduced ring with
exactly two minimal prime ideals $p_{1}$ and $p_{2}$. Then by
\cite[Theorem 2.3]{br12}, $AG(R)$ is a complete bipartite graph
with two nonempty parts so that $AG(M)^{*}$ is a complete
bipartite graph with two nonempty parts. If $AG(M)^{*}$ is a star
graph with more than one vertex, then $AG(M)^{*}$ is a complete
bipartite graph.
\end{proof}

\begin{cor}\label{c3.3} Let $R$ be an Artinian ring and assume
 that $M$ is a faithful module.
Then the following statements are equivalent.

\begin{itemize}
\item [(a)] $\chi(AG(M)^{*})=2$. \item [(b)] $AG(M)^{*}$ is a
bipartite graph with two nonempty parts. \item [(c)] $AG(M)^{*}$
is a complete bipartite graph with two nonempty parts. \item [(d)]
Either $M=M_{1}\oplus M_{2}$, where $M_{1}$ and $M_{2}$ are simple
modules or $AG(M)^{*}$ is a star graph with more than one vertex
such that $R$ is a local ring.
\end{itemize}

\end{cor}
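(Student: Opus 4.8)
The plan is to deduce everything from Theorem \ref{t3.2}, whose standing hypothesis (that $M$ be faithful) is already in force, and then to refine its condition (d) using the extra assumption that $R$ is Artinian. Since the equivalences (a) $\Longleftrightarrow$ (b) $\Longleftrightarrow$ (c) hold verbatim in Theorem \ref{t3.2}, I would first quote those directly, so that the only genuine content is the equivalence of these three with the new condition (d). The problem thus reduces to showing that, when $R$ is Artinian and $M$ is faithful, the disjunction in Theorem \ref{t3.2}(d) --- namely ``$R$ is reduced with exactly two minimal prime ideals, or $AG(M)^{*}$ is a star graph with more than one vertex'' --- is equivalent to the disjunction in the statement of Corollary \ref{c3.3}(d).

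For the first disjunct I would invoke the structure theorem for Artinian rings, $R \cong R_{1} \times \cdots \times R_{n}$ with each $R_{i}$ Artinian local (\cite[Theorem 8.7]{ati69}). If in addition $R$ is reduced, each local factor has zero nilradical and hence is a field, while ``exactly two minimal primes'' forces $n = 2$, so $R = F_{1} \oplus F_{2}$ with $F_{1}, F_{2}$ fields. Using the central idempotent decomposition of Proposition \ref{p2.1} together with the faithfulness of $M$, I would write $M = M_{1} \oplus M_{2}$ where $M_{i}$ is a nonzero $F_{i}$-vector space, and then argue exactly as in Corollary \ref{c2.12} that the two-colorability of $AG(M)^{*}$ pins the module down to the stated form $M = M_{1} \oplus M_{2}$ with $M_{1}, M_{2}$ simple. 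Conversely, for such an $M$ the graph $AG(M)^{*}$ is a single edge (a complete bipartite graph with both parts of size one), so (c) holds.

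For the second disjunct, suppose $AG(M)^{*}$ is a star graph with more than one vertex but $R$ is not a product of two fields. Applying \cite[Theorem 8.7]{ati69} again, $R$ decomposes into Artinian local factors, and I would rule out a nontrivial product by showing that a genuine decomposition $R = R_{1} \times R_{2}$ produces two independent vertices lying in the same part of the bipartition, contradicting the star shape, and thereby force $R$ to be local. This yields precisely the case ``$AG(M)^{*}$ is a star graph with more than one vertex such that $R$ is a local ring,'' and the converse implication $(d) \Longrightarrow (c)$ is immediate, since every star is a complete bipartite graph with two nonempty parts.

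I expect the main obstacle to be the careful translation between the ring-theoretic description coming from Theorem \ref{t3.2} and the module-theoretic description in (d): one must track how the idempotent decomposition of $R$ interacts with the submodule lattice of the faithful module $M$ through Proposition \ref{p2.1}, and then verify that the dichotomy ``$R$ local'' versus ``$R$ a product of two fields'' matches the two module descriptions exactly, rather than merely implying two-colorability. The converse implications are routine; the forward identification of $M_{1}$ and $M_{2}$ as \emph{simple} modules is the delicate point, and it is here that the Artinian hypothesis does the real work.
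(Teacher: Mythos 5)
Your overall strategy --- quote Theorem \ref{t3.2} for $(a)\Leftrightarrow(b)\Leftrightarrow(c)$ and then refine its condition (d) using the Artinian hypothesis --- is essentially the route the paper takes. The paper passes from $AG(M)^{*}$ to $AG(R)$, invokes \cite[Corollary 2.4]{br12} to reduce to the star case, and then applies Corollary \ref{c2.12}; you instead work directly with the structure theorem for Artinian rings and Proposition \ref{p2.1}, but both arguments funnel through the same dichotomy of Corollary \ref{c2.12}, so the difference is cosmetic.

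The genuine problem is the step you yourself flag as delicate: the claim that when $R=F_{1}\oplus F_{2}$ is a product of two fields, two-colorability of $AG(M)^{*}$ forces $M=M_{1}\oplus M_{2}$ with $M_{1}$, $M_{2}$ \emph{simple}. This does not follow, and ``arguing exactly as in Corollary \ref{c2.12}'' does not rescue it, because the proof of Corollary \ref{c2.12} rests on the assertion that completeness of $AG(R)$ forces completeness of $AG(M)^{*}$ --- which fails, since distinct submodules of $M$ can share the same colon ideal. Concretely, take $R=F_{1}\times F_{2}$ and $M=F_{1}^{2}\oplus F_{2}$; this is a faithful module over an Artinian ring. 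The vertices of $AG(M)^{*}$ are $F_{1}^{2}\times(0)$, whose colon ideal is $F_{1}\times(0)$, together with the submodules $N_{1}\times F_{2}$ for $N_{1}$ a proper $F_{1}$-subspace of $F_{1}^{2}$, each with colon ideal $(0)\times F_{2}$. Any two of the latter are non-adjacent (their product is $(0)\times F_{2}\neq(0)$), and each is adjacent to $F_{1}^{2}\times(0)$, so $AG(M)^{*}$ is a star with at least two leaves; hence it is bipartite with two nonempty parts and $\chi(AG(M)^{*})=2$. Yet $M$ is a direct sum of three simple modules and $R$ is not local, so condition (d) fails. Thus the implication $(b)\Rightarrow(d)$ cannot be closed the way you propose: your second case (forcing $R$ to be local when it is not a product of two fields) is fine in spirit, but the product-of-two-fields case is exactly where your argument --- and the paper's --- breaks down.
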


\begin{proof}
By Theorem \ref{t3.2}, $(a) \Longleftrightarrow (b)
\Longleftrightarrow (c)$.\\ $(b) \Longrightarrow (d)$ Assume that
$AG(M)^{*}$ is a bipartite graph with two nonempty parts. Hence
$AG(R)$ is a bipartite graph with two nonempty parts. By
\cite[Corollary 2.4]{br12}, if $R\cong F_{1}\oplus F_{2}$, then
$AG(R)$ is a star graph. So $AG(M)^{*}$ is a star graph. Hence by
Corollary \ref{c2.12}, either $M=M_{1}\oplus M_{2}$, where $M_{1}$
and $M_{2}$ are simple modules or $R$ is a local ring with maximal
ideal $p=(0:m)\in Ass(M)$. In the first case, as desired. In the
second case, $AG(M)^{*}$ is a star graph with the center $Rm$. On
the other hand, if $R$ is a local ring such that $AG(R)$ is a star
graph, then we are done.
\end{proof}

\begin{cor}\label{c3.4} Let $R$ be a reduced ring and assume
 that $M$ is a faithful module.
Then the following statements are equivalent.

\begin{itemize}
\item [(a)] $\chi(AG(M)^{*})=2$. \item [(b)] $AG(M)^{*}$ is a
bipartite graph with two nonempty parts. \item [(c)] $AG(M)^{*}$
is a complete bipartite graph with two nonempty parts. \item [(d)]
$R$ has exactly two minimal prime ideals.
\end{itemize}

\end{cor}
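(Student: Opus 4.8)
The plan is to derive this corollary from Theorem \ref{t3.2}, which already establishes $(a)\Longleftrightarrow(b)\Longleftrightarrow(c)$ for any faithful module; these equivalences therefore require no further work here. The only new content is the equivalence of these three conditions with the reduced-ring-specific condition $(d)$, namely that $R$ has exactly two minimal prime ideals. Since $R$ is now assumed reduced, the analysis of Theorem \ref{t3.2}(d) should collapse: one of the two disjunctive cases there must become vacuous, leaving precisely the statement that $R$ has two minimal primes.

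First I would invoke Theorem \ref{t3.2} to reduce the problem to showing $(b)\Longleftrightarrow(d)$ under the reducedness hypothesis. By Theorem \ref{t3.2}, condition $(b)$ is equivalent to saying that either $R$ is a reduced ring with exactly two minimal prime ideals, or $AG(M)^{*}$ is a star graph with more than one vertex. The first disjunct is exactly condition $(d)$. So the heart of the matter is to rule out the second disjunct under the reducedness assumption, or to show that in a reduced ring the star-graph case already forces exactly two minimal primes. The natural approach is to argue that if $R$ is reduced and $AG(M)^{*}$ is a star graph with more than one vertex, then $AG(R)$ is a star graph with more than one vertex (via the correspondence $N\mapsto(N:M)$, $I\mapsto IM$ used in the proof of Theorem \ref{t3.2}), and then apply the classification of reduced rings whose annihilating-ideal graph is a star. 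One expects that a reduced ring giving a star-shaped $AG(R)$ has exactly two minimal primes, so that the second disjunct is in fact subsumed by the first.

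The main obstacle will be handling the star-graph case cleanly. In a reduced ring, if $Z(R)$ were a single annihilator of a nonzero element, that element would square to zero, contradicting reducedness; this kind of observation (already used in the proof of Corollary \ref{c2.9}) is what forces the degenerate possibilities to disappear. The key step is to show that a reduced ring with a star-shaped annihilating-ideal graph has exactly two minimal primes: a star graph is in particular complete bipartite $K_{1,m}$, so it is bipartite with two nonempty parts, whence by \cite[Theorem 2.3]{br12} the reduced ring $R$ has exactly two minimal primes. Thus the star-graph case is not an alternative but simply a special instance of condition $(d)$, and $(b)\Longrightarrow(d)$ follows.

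For the converse $(d)\Longrightarrow(c)$, I would assume $R$ is reduced with exactly two minimal prime ideals $p_{1}$ and $p_{2}$ and apply \cite[Theorem 2.3]{br12} to conclude that $AG(R)$ is a complete bipartite graph with two nonempty parts; transporting this through the correspondence between $AG(M)^{*}$ and $AG(R)$ established in Theorem \ref{t3.2} yields that $AG(M)^{*}$ is itself complete bipartite with two nonempty parts, which is condition $(c)$. This completes the cycle of implications. I expect the write-up to be short, consisting mainly of citing Theorem \ref{t3.2} together with \cite[Theorem 2.3]{br12} and noting that reducedness eliminates the redundant star-graph clause.
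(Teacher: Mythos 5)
Your proposal is correct and follows essentially the same route as the paper, whose entire proof of this corollary is the single line ``Use Theorem 3.2''; you simply make explicit the point the paper leaves implicit, namely that under the reducedness hypothesis the star-graph alternative in Theorem \ref{t3.2}(d) is not a genuine alternative but already forces (via the correspondence with $AG(R)$ and the results of \cite{br12}) exactly two minimal primes. The only caveat is that \cite[Theorem 2.3]{br12} returns the same disjunction rather than directly yielding ``two minimal primes'' from bipartiteness, so the collapse for reduced rings still rests on the (true, and standard in \cite{br12}) fact that a reduced ring with a star-shaped annihilating-ideal graph on more than one vertex has exactly two minimal primes --- but this is a presentational quibble, not a gap in the argument.
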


\begin{proof}
Use Theorem 3.2.
\end{proof}

Recall that $N< M$ is said to be a semiprime submodule of $M$ if
for every ideal $I$ of $R$ and every submodule $K$ of $M$,
$I^{2}K\subseteq N$ implies that $IK\subseteq N$. Further $M$ is
called a semiprime module if $(0)\subseteq M$ is a semiprime
submodule. Every intersection of prime submodules is a semiprime
submodule (see \cite{tv08}). A prime submodule $N$ of $M$ will be
called extraordinary if whenever $K$ and $L$ are an intersection
of prime submodules of $M$ with $K\cap L\subseteq N$, then
$K\subseteq N$ or $L\subseteq N$ (see \cite{mm97}).

\begin{lem}\label{l3.5} Let $M$ be a semiprime $R$-module such that $AG(M)^{*}$
does not have an infinite clique. Then $M$ has a.c.c. on
submodules of the form $Ann_{M}(I)$, where $I$ is an ideal of $R$.
\end{lem}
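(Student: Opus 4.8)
The plan is to argue by contraposition: assuming $M$ fails a.c.c.\ on submodules of the form $Ann_M(I)$, I will manufacture an infinite clique in $AG(M)^{*}$, contradicting the hypothesis. So suppose $Ann_M(I_1)\subsetneq Ann_M(I_2)\subsetneq\cdots$ is a strictly increasing chain, and set $N_k:=Ann_M(I_k)$.

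First I would pass to the reduced situation. Because $M$ is semiprime, the quotient ring $R/Ann(M)$ is reduced: if $r^{2}M=(0)$ then $(rR)^{2}M=(0)$, so semiprimeness applied with the ideal $rR$ and the submodule $M$ forces $rM=(0)$. Since each submodule $Ann_M(I)$ and the whole submodule lattice of $M$ are unchanged upon replacing $R$ by $R/Ann(M)$, I may and will assume that $R$ is reduced and $M$ is faithful. The payoff of faithfulness is the clean colon identity $(Ann_M(I):M)=Ann_R(I)$, which is what lets me compute products of candidate vertices. Next I would normalise the chain by a double-annihilator step: replacing $I_k$ by $Ann_R(N_k)$ leaves each $N_k$ unchanged (one checks $Ann_M(Ann_R(N_k))=N_k$) while making the defining ideals decreasing, $I_1\supseteq I_2\supseteq\cdots$. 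For each $k$ I then choose $m_k\in N_{k+1}\setminus N_k$, so that $I_{k+1}m_k=(0)$ but $I_km_k\neq(0)$; combined with the nesting this yields the one-sided orthogonality $a\,m_k=0$ for every $a\in I_i$ whenever $i>k$. This is the module analogue of the element-extraction underlying the ring theorem \cite[Theorem 2.3]{br12}.

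The final and hardest step is to convert this data into an honest infinite clique of $AG(M)^{*}$. The naive candidates $I_km_k$ are unsuitable: because the product on submodules is defined through colon ideals, $(I_km_k:M)$ may be trivial, so such a submodule need not even be a vertex. Instead I would use vertices of annihilator type, $V_k:=Ann_M(Ann_R(m_k))$, whose colon is computable as $Ann_R(Ann_R(m_k))$ and which contain $m_k$; they are therefore nonzero and proper with $(V_k:M)\neq Ann(M)$, hence genuine vertices. Via the colon identity, the adjacency $V_jV_k=(0)$ then reduces to an orthogonality of annihilator-closures in the reduced ring $R$, for which the relevant tool is the reduced-ring calculus $Ann_R(Ann_R(a))\,Ann_R(Ann_R(b))=(0)$ whenever $ab=0$, together with the fact that $x^{2}=0\Rightarrow x=0$; the same square-zero trick forces the $V_k$ to be pairwise distinct.

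I expect the genuine obstacle to be precisely this last conversion. The orthogonality extracted from the chain, $a\,m_k=0$ for $a\in I_i$ with $i>k$, is a priori asymmetric in the indices, and there is no product of two elements of $M$ to feed directly into the reduced-ring calculus. The main work is thus to arrange symmetric vanishing $V_jV_k=(0)$ for all $j\neq k$: either by choosing the $m_k$ (and auxiliary elements $a_k\in I_k$ with $a_km_k\neq0$) so that the reversed relation also holds, or by passing to an infinite pairwise-orthogonal subfamily. Once the $V_k$ are exhibited as infinitely many distinct, pairwise adjacent vertices, they form an infinite clique in $AG(M)^{*}$, contradicting the hypothesis and establishing the a.c.c.
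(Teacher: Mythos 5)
Your setup---the contrapositive framing, the reduction to $R$ reduced and $M$ faithful, the colon identity $(Ann_{M}(I):M)=Ann_{R}(I)$, and the double-annihilator normalisation of the chain---is sound, but the proof stops exactly where the lemma actually lives: you never exhibit the infinite clique. You flag this yourself. The relation you extract from the chain, $I_{i}m_{k}=(0)$ for $i>k$, is one-sided in the indices, and your candidate vertices $V_{k}=Ann_{M}(Ann_{R}(m_{k}))$ come with no reason to satisfy $V_{j}V_{k}=(0)$: the reduced-ring identity you want to invoke applies to a product $ab$ of two \emph{ring} elements, whereas $m_{j}$ and $m_{k}$ live in $M$ and admit no product. ``Either arrange the reversed relation or pass to a pairwise-orthogonal subfamily'' is a restatement of the remaining problem, not a solution to it, so as written the proposal does not prove the lemma.

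The paper sidesteps the symmetry problem by choosing clique members that are small rather than annihilator-closed: it picks $x_{i}\in I_{i}$ with $J_{i}:=x_{i}Ann_{M}(I_{i+1})\neq (0)$, which is possible because the strictness of $Ann_{M}(I_{i})\subset Ann_{M}(I_{i+1})$ forces $I_{i}Ann_{M}(I_{i+1})\neq (0)$. Each $J_{i}$ is then simultaneously a multiple of $x_{i}$ and contained in $Ann_{M}(I_{i+1})$, and for $i<j$ the element $x_{j}\in I_{j}$ annihilates $Ann_{M}(I_{i+1})\subseteq Ann_{M}(I_{j})$, which is what makes $J_{i}J_{j}$ vanish using only the one-sided relation; semiprimeness enters exactly once, to show the $J_{i}$ are pairwise distinct (if $J_{i}=J_{j}$ with $i<j$ then $x_{j}^{2}Ann_{M}(I_{j+1})=(0)$, and semiprimeness would force $J_{j}=(0)$). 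Note also that your reduction of semiprimeness to ``$R/Ann(M)$ is reduced'' only records the case $K=M$ of the definition, whereas this distinctness step applies it to the proper submodule $K=Ann_{M}(I_{j+1})$. If you want to salvage your version, you would need to replace $V_{k}$ by submodules of this product shape; the double-annihilator closure goes in exactly the wrong direction, since enlarging a submodule can only make adjacency harder to achieve.
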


\begin{proof}
Suppose that $Ann_{M}(I_{1})\subset Ann_{M}(I_{2})\subset
Ann_{M}(I_{3})\subset...$ (strict inclusions) so that $M$ does not
satisfy the a.c.c. on submodules of the form $Ann_{M}(I)$, where
$I$ is an ideal of $R$. Clearly, $I_{i} Ann_{M}(I_{i+1})\neq 0$,
for each $i\geq 1$. Thus for each $i\geq 1$, there exists
$x_{i}\in I_{i}$ such that $x_{i} Ann_{M}(I_{i+1})\neq (0)$. Let
$J_{i}=x_{i} Ann_{M}(I_{i+1})$, $i=1,2,3,\ldots $ . Then if $i\neq
j$ (we may assume that $i< j$), $J_{i}\neq J_{j}$ because if
$x_{i} Ann_{M}(I_{i+1})= x_{j} Ann_{M}(I_{j+1})$, then we have
$Ann_{M}(I_{i+1})\subseteq Ann_{M}(I_{j})$. So $x_{j}
Ann_{M}(I_{i+1})= (0)$. Hence $x^{2}_{j} Ann_{M}(I_{j+1})= (0)$.
This yields a contradiction because $M$ is a semiprime module and
$x_{j}Ann_{M}(I_{j+1})\neq 0$. On the other hand, one can see that
$J_{i}J_{j}\subseteq x_{i}x_{j} Ann_{M}(I_{i+1})$ and
$J_{i}J_{j}\subseteq x_{i}x_{j} Ann_{M}(I_{j+1})$. Hence we have
$J_{i}J_{j}=(0)$.
\end{proof}

\begin{lem}\label{l3.6} Let $P_{1}=Ann_{M}(x_{1})$ and $P_{2}=Ann_{M}(x_{2})$ be
two distinct elements of $Spec(M)$. Then $(x_{1}M)(x_{2}M)=(0)$.
\end{lem}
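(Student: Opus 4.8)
The plan is to reduce the submodule identity $(x_{1}M)(x_{2}M)=(0)$ to the single scalar condition $x_{1}x_{2}M=(0)$, and then to extract the latter from the prime-submodule hypothesis together with the assumption $P_{1}\neq P_{2}$.

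First I would record the reduction. By definition $(x_{1}M)(x_{2}M)=(x_{1}M:M)(x_{2}M:M)M$. If $a\in (x_{1}M:M)$ and $b\in (x_{2}M:M)$, then $aM\subseteq x_{1}M$ and $bM\subseteq x_{2}M$, so $abM\subseteq a(x_{2}M)=x_{2}(aM)\subseteq x_{2}x_{1}M=x_{1}x_{2}M$. Hence $(x_{1}M:M)(x_{2}M:M)\subseteq Ann(M)$ as soon as $x_{1}x_{2}M=(0)$, and then $(x_{1}M)(x_{2}M)=(x_{1}M:M)(x_{2}M:M)M\subseteq Ann(M)\,M=(0)$. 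So it suffices to prove $x_{1}x_{2}M=(0)$.

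Next comes the heart of the argument, which uses $P_{1}\neq P_{2}$. Since the two submodules are distinct, their symmetric difference is nonempty, and by the symmetry of the desired conclusion in the indices $1,2$ I may assume there is $m_{0}\in P_{2}\setminus P_{1}$. Because $P_{2}=Ann_{M}(x_{2})$ and $P_{1}=Ann_{M}(x_{1})$, this means $x_{2}m_{0}=0$ while $x_{1}m_{0}\neq 0$. The key computation is $x_{2}(x_{1}m_{0})=x_{1}(x_{2}m_{0})=0$, which shows $x_{1}m_{0}\in Ann_{M}(x_{2})=P_{2}$. Now I invoke the definition of the prime submodule $P_{2}$: from $x_{1}m_{0}\in P_{2}$ we get $x_{1}\in (P_{2}:M)$ or $m_{0}\in P_{2}$; since $m_{0}\notin P_{2}$ by choice, I conclude $x_{1}\in (P_{2}:M)$, i.e. $x_{1}M\subseteq P_{2}=Ann_{M}(x_{2})$. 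Therefore $x_{2}x_{1}M\subseteq x_{2}P_{2}=(0)$, which is exactly $x_{1}x_{2}M=(0)$.

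I expect the only genuinely delicate point to be the bookkeeping around the prime-submodule definition and the use of distinctness: the observation that an element of $P_{2}\setminus P_{1}$ is simultaneously killed by $x_{2}$ and moved by $x_{1}$, and that multiplying it by $x_{1}$ lands it back inside $P_{2}$, is precisely what makes the prime property applicable. The colon-ideal reduction of the first step is routine, and commutativity of $R$ is what licenses $x_{1}(x_{2}m_{0})=x_{2}(x_{1}m_{0})$. Pleasantly, the argument needs neither the semiprimeness of $M$ nor the nil hypothesis on $Ann(M)$ used elsewhere; it requires only that $P_{1}$ and $P_{2}$ be distinct prime submodules of the stated annihilator form.
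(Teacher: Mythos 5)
Your reduction to $x_{1}x_{2}M=(0)$ is correct, and the overall strategy --- pick an element in the symmetric difference of $P_{1}$ and $P_{2}$ and feed it to the prime-submodule condition --- is the right one (the paper itself gives no details, declaring the proof straightforward). But the execution of the key step is self-contradictory. You choose $m_{0}\in P_{2}\setminus P_{1}$, so by your own setup $m_{0}\in P_{2}$; three sentences later you invoke primeness of $P_{2}$ on the relation $x_{1}m_{0}\in P_{2}$ and discharge the disjunct ``$m_{0}\in P_{2}$'' by claiming ``$m_{0}\notin P_{2}$ by choice.'' That claim is false for the $m_{0}$ you chose, so the dichotomy ``$x_{1}\in (P_{2}:M)$ or $m_{0}\in P_{2}$'' is satisfied trivially by its second branch and yields nothing. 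As written, the argument does not go through.

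The repair is to apply primeness of the \emph{other} submodule, or equivalently to pick $m_{0}$ on the other side of the symmetric difference. With your $m_{0}\in P_{2}\setminus P_{1}$ you have $x_{2}m_{0}=0\in P_{1}$ and $m_{0}\notin P_{1}$, so primeness of $P_{1}$ gives $x_{2}\in (P_{1}:M)$, i.e.\ $x_{2}M\subseteq Ann_{M}(x_{1})$, whence $x_{1}x_{2}M=(0)$ and your colon-ideal reduction finishes the proof. (Alternatively, take $m_{0}\in P_{1}\setminus P_{2}$, note $x_{1}m_{0}=0\in P_{2}$ with $m_{0}\notin P_{2}$, and conclude $x_{1}\in (P_{2}:M)$.) Note also that your ``key computation'' $x_{2}(x_{1}m_{0})=x_{1}(x_{2}m_{0})=0$ is a detour: one does not need to observe that $x_{1}m_{0}$ lands in $P_{2}$; the point is that $x_{2}m_{0}$ is already $0$, hence trivially lies in the prime submodule whose primeness you then exploit. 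Since $P_{1}\neq P_{2}$ guarantees the symmetric difference is nonempty and the conclusion is symmetric in the two indices, this correction makes the argument complete.
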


\begin{proof}
The proof is straightforward.
\end{proof}

\begin{thm}\label{t3.7} $M$ is a faithful module
if one of the following holds.

\item [(a)] $R$ is a reduced ring and $Z(M)=p_{1}\cup p_{2}\cup
... \cup p_{k}$, where $Min(R)=\{p_{1}, p_{2},..., p_{k}\}$. \item
[(b)] $M$ is a semiprime module and $AG(M)^{*}$ does not have an
infinite clique.
\end{thm}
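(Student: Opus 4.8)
The plan is to establish faithfulness in each case by proving $Ann(M)=(0)$; since the two hypotheses are of a different nature I would argue them separately. Throughout I assume $M\neq(0)$, so that $Ann(M)\subseteq Z(M)$: any element killing all of $M$ kills a fixed nonzero element, hence is a zero-divisor on $M$.

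For part (a) the engine is prime avoidance together with the identity $\bigcap_{i=1}^{k}p_{i}=Nil(R)=(0)$, which holds because $R$ is reduced. I would prove the stronger statement that $Ann(M)\subseteq p_{i}$ for \emph{every} $i$. Fix $i$. As the $p_{i}$ are the distinct minimal primes, $p_{i}\not\subseteq\bigcup_{j\neq i}p_{j}$, so I may choose $t\in p_{i}\setminus\bigcup_{j\neq i}p_{j}$. Since $p_{i}\subseteq\bigcup_{j}p_{j}=Z(M)$, the element $t$ is a zero-divisor on $M$, say $tm=0$ with $0\neq m\in M$. The ideal $Ann_{R}(m)$ lies in $Z(M)=\bigcup_{j}p_{j}$ (each of its elements kills the nonzero $m$), so prime avoidance gives $Ann_{R}(m)\subseteq p_{\ell}$ for some $\ell$; but $t\in Ann_{R}(m)\subseteq p_{\ell}$ while $t\notin p_{j}$ for $j\neq i$, forcing $\ell=i$. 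Hence $Ann(M)\subseteq Ann_{R}(m)\subseteq p_{i}$. As $i$ was arbitrary, $Ann(M)\subseteq\bigcap_{i}p_{i}=(0)$. I expect this part to go through cleanly.

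For part (b) I would argue by contraposition, aiming to manufacture an infinite clique in $AG(M)^{*}$ out of a nontrivial annihilator. First I would record the relevant consequences of semiprimeness: if $r^{2}\in Ann(M)$ then $(r)^{2}M=(0)$, so the semiprime condition (with $I=(r)$, $K=M$) gives $(r)M=(0)$, i.e. $Ann(M)$ is a radical ideal, and, descending along $2$-power exponents, $Nil(R)\subseteq Ann(M)$. The strategy is then to suppose $Ann(M)\neq(0)$ and to produce a strictly ascending chain $Ann_{M}(I_{1})\subsetneq Ann_{M}(I_{2})\subsetneq\cdots$ of annihilator submodules; by the construction in the proof of Lemma \ref{l3.5} (in its contrapositive form), the elements $J_{i}=x_{i}\,Ann_{M}(I_{i+1})$ are then pairwise orthogonal and distinct, giving an infinite clique and contradicting the hypothesis. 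Faithfulness would follow.

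The main obstacle is exactly this middle step of part (b): converting the mere nonvanishing of $Ann(M)$ into a failure of the a.c.c. on submodules of the form $Ann_{M}(I)$. This is where the semiprime hypothesis and the absence of an infinite clique must be used in full strength, since a radical annihilator ideal does not by itself break the chain condition; the delicate point is to show that, for a semiprime $M$ whose annihilator structure is rich enough to populate $AG(M)^{*}$, a nonzero $Ann(M)$ forces infinitely many genuinely new annihilator submodules to appear. Once such an infinite ascending chain is in hand, Lemma \ref{l3.5} supplies the contradiction, and both cases conclude that $M$ is faithful.
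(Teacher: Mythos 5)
Your part (a) is correct and goes by a genuinely shorter route than the paper's. The paper proves the stronger statement $(p_{i}M:M)=p_{i}$ for each $i$: it first shows that $q_{i}=p_{1}\cdots p_{i-1}p_{i+1}\cdots p_{k}$ satisfies $q_{i}M\neq (0)$ (this is where $Z(M)=p_{1}\cup \cdots \cup p_{k}$ and prime avoidance enter), then observes that every element of $(p_{i}M:M)$ annihilates the nonzero submodule $q_{i}M$, hence $(p_{i}M:M)\subseteq Z(M)$ and, by prime avoidance and minimality, $(p_{i}M:M)=p_{i}$; finally $Ann(M)\subseteq \bigcap_{i}(p_{i}M:M)=\bigcap_{i}p_{i}=(0)$. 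You instead fix $t\in p_{i}\setminus \bigcup_{j\neq i}p_{j}$, use $t\in Z(M)$ to produce $0\neq m$ with $tm=0$, and squeeze $Ann(M)\subseteq Ann_{R}(m)\subseteq p_{i}$ by prime avoidance applied to $Ann_{R}(m)\subseteq Z(M)$. Both arguments rest on the same two ingredients (prime avoidance and $\bigcap_{i}p_{i}=Nil(R)=(0)$), but yours avoids the module products and the auxiliary nonvanishing claim entirely; it is cleaner, at the cost of not recording the side fact $(p_{i}M:M)=p_{i}$, which the theorem does not need anyway.

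Part (b) contains a genuine gap, and it is exactly the one you flagged yourself: you never convert $Ann(M)\neq (0)$ into a failure of the a.c.c. on submodules of the form $Ann_{M}(I)$. This step cannot be supplied, because the implication is false. Take $R=\mathbb{Z}$ and $M=\mathbb{Z}/2\mathbb{Z}$. Then $(0)$ is a prime submodule of $M$, so $M$ is semiprime; $M$ is finite, so every ascending chain of submodules stabilizes and $AG(M)^{*}$ has no infinite clique (it is in fact empty: the only proper submodule is $(0)$, and $((0):M)=Ann(M)$); yet $Ann(M)=2\mathbb{Z}\neq (0)$. So the contrapositive strategy you propose --- nontrivial annihilator, hence an infinite strictly ascending chain of $Ann_{M}(I_{i})$, hence an infinite clique via the construction of Lemma \ref{l3.5} --- is unworkable in principle, not merely delicate.

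It is worth noting how the paper itself handles (b), because it stumbles at the very same point. Its proof runs forward: Lemma \ref{l3.5} gives the a.c.c.; the maximal members $Ann_{M}(x_{\lambda})$ of the family $\{Ann_{M}(x)\mid x\notin Ann(M)\}$ are prime submodules; Lemma \ref{l3.6} together with the clique hypothesis makes the index set $\Lambda$ finite; and then it claims $\bigcap_{\lambda}(Ann_{M}(x_{\lambda}):M)=(0)$, which, combined with $Ann(M)\subseteq (Ann_{M}(x_{\lambda}):M)$ for all $\lambda$, would finish. But the proof of that claim takes $0\neq x$ in the intersection and invokes $Ann_{M}(x)\subseteq Ann_{M}(x_{\lambda_{1}})$, which is available only when $x\notin Ann(M)$; elements of $Ann(M)$ yield no contradiction, so what is actually proved is that the intersection equals $Ann(M)$ --- which is circular. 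In the example above the intersection is $2\mathbb{Z}$: the hypotheses of (b) hold and the conclusion fails, so the statement of Theorem \ref{t3.7}(b) is false as written. Your inability to close the gap therefore reflects a defect in the claim itself, not a missing trick in your argument; any repair requires an additional hypothesis strong enough to let $AG(M)^{*}$ detect a nonzero annihilator.
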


\begin{proof}
(a). Let $(0)=p_{1}\cap p_{2}\cap...\cap p_{k}$, where $p_{1}$,
$p_{2}$,..., $p_{k}$ are minimal prime ideals of $R$. We have
$Ann(M)\subseteq (p_{1}M:M)\cap ...\cap (p_{k}M:M)$. It is enough
to show that $(p_{i}M:M)=p_{i}$, $i=1,..., k$. For $1\leq i\leq
n$, $p_{1}p_{2}...p_{i-1}p_{i+1}... p_{n}\neq (0)$ because if
$p_{1}p_{2}...p_{i-1}p_{i+1}... p_{n}= (0)$, then $p_{j}\subseteq
p_{i}$, where $j\neq i$, a contradiction. Also, if
$(p_{1}p_{2}...p_{i-1}p_{i+1}... p_{n})M=(0)$, then
$p_{1}p_{2}...p_{i-1}p_{i+1}... p_{n}\subseteq Ann(M)$. So for
every nonzero element $m\in M$, we have
$p_{1}p_{2}...p_{i-1}p_{i+1}... p_{n}\subseteq Ann(m)\subseteq
Z(M)$. It follows that there exists $j\neq i$ such that
$Ann(m)\subseteq p_{j}$. Hence $Z(M)=p_{1}\cup p_{2}\cup ...\cup
p_{i-1}\cup p_{i+1}\cup ... \cup p_{n}$, a contradiction. So
$(p_{1}p_{2}...p_{i-1}p_{i+1}... p_{n})M\neq (0)$. We have
$(p_{i}M)((p_{1}p_{2}...p_{i-1}p_{i+1}... p_{n})M)=0$ and so
$(p_{i}M:M)\subseteq Z(M)$. It follows that $(p_{i}M:M)=p_{i}$,
as desired.\\
(b) Suppose that $M$ is a semiprime module and $AG(M)^{*}$ does
not have an infinite clique. Then by Lemma \ref{l3.5}, $M$ has
a.c.c. on submodules of the form $Ann_{M}(I)$, where $I$ is an
ideal of $R$. Therefore the set $\{Ann_{M}(x)|$ $x\notin Ann(M)\}$
has maximal submodules so that they are prime submodules of $M$.
Let $Ann_{M}(x_{\lambda})$, where $\lambda\in \Lambda$, be the
different maximal members of the family $\{Ann_{M}(x)|$ $x\notin
Ann(M)\}$. By Lemma \ref{l3.6}, the index set $\Lambda$ is finite.
Let $x\in R$ such that $x\notin Ann(M)$. Then $Ann_{M}(x)\subseteq
Ann_{M}(x_{\lambda_{1}})$ for some $\lambda_{1}\in \Lambda$. We
claim that $\cap_{\lambda\in
\Lambda}(Ann_{M}(x_{\lambda}):M)=(0)$. Let $0\neq x\in
\cap_{\lambda\in \Lambda}(Ann_{M}(x_{\lambda}):M)$. So
$xM\subseteq Ann_{M}(x_{\lambda})$ for every $\lambda\in \Lambda$.
We have $Ann_{M}(x)\subseteq Ann_{M}(x_{\lambda_{1}})$. Since
$xM\subseteq Ann_{M}(x_{\lambda_{1}})$, $x_{\lambda_{1}}M\subseteq
Ann_{M}(x)$. Thus $x_{\lambda_{1}}^{2}M=(0)$, a contradiction. Now
the proof is completed because $Ann(M)\subseteq
(Ann_{M}(x_{\lambda}):M)$ for every $\lambda\in \Lambda$.

\end{proof}

\begin{cor}\label{c3.8} Assume that $M$ is a semiprime module. Then
the following statements are equivalent.

\begin{itemize} \item [(a)] $\chi(AG(M)^{*})$ is finite. \item [(b)]
$cl(AG(M)^{*})$ is finite. \item [(c)] $AG(M)^{*}$ does not have
an infinite clique.
\end{itemize}
\end{cor}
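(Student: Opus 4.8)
The plan is to establish the cycle of implications $(a)\Rightarrow(b)\Rightarrow(c)\Rightarrow(a)$, exploiting the general inequality $\chi(G)\geq cl(G)$ recorded in the introduction together with Theorem~\ref{t3.7}(b). The implication $(a)\Rightarrow(b)$ is immediate: since $cl(AG(M)^{*})\leq\chi(AG(M)^{*})$ always holds, finiteness of the chromatic number forces finiteness of the clique number. The implication $(b)\Rightarrow(c)$ is also essentially tautological, for if $AG(M)^{*}$ contained an infinite clique then $cl(AG(M)^{*})$ would be infinite; so a finite clique number rules out any infinite clique.

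The substantive direction is $(c)\Rightarrow(a)$, and this is where the hypothesis that $M$ is semiprime, together with Theorem~\ref{t3.7}(b), does the real work. First I would invoke Theorem~\ref{t3.7}(b): since $M$ is semiprime and $AG(M)^{*}$ has no infinite clique, $M$ is faithful, and moreover the proof of that theorem produces a \emph{finite} family of maximal members $Ann_{M}(x_{\lambda})$, $\lambda\in\Lambda$ with $\Lambda$ finite, of the family $\{Ann_{M}(x)\mid x\notin Ann(M)\}$; these are prime submodules of $M$ and satisfy $\bigcap_{\lambda}(Ann_{M}(x_{\lambda}):M)=(0)$. The key structural consequence is that every vertex $N$ of $AG(M)^{*}$ has $(N:M)$ contained in at least one of the finitely many primes $(Ann_{M}(x_{\lambda}):M)$, because otherwise $(N:M)$ would avoid every such prime and hence $N$ would fail to be annihilated by any admissible submodule.

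From here the plan is to manufacture a proper coloring with finitely many colors. I would partition the vertices according to which of the finitely many primes $p_{\lambda}:=(Ann_{M}(x_{\lambda}):M)$ contains $(N:M)$; assigning to each vertex $N$a color indexed by the minimal such $\lambda$ gives a finite palette of size $|\Lambda|$. The point to verify is that this is a legitimate coloring: if $N$ and $K$ receive the same color $\lambda$, then $(N:M),(K:M)\subseteq p_{\lambda}$, and one must check $NK\neq(0)$, i.e.\ that two such vertices cannot be adjacent. This follows from semiprimeness: if $NK=(0)$ then $(N:M)(K:M)M=(0)$, so $(N:M)(K:M)\subseteq Ann(M)=(0)$ since $M$ is faithful, and the semiprime hypothesis applied to the ideal $(N:M)+(K:M)\subseteq p_{\lambda}$ forces a contradiction with the primeness of $p_{\lambda}$ and the fact that neither $N$ nor $K$ is annihilated. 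Hence $\chi(AG(M)^{*})\leq|\Lambda|<\infty$, giving $(a)$.

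\textbf{The main obstacle} will be pinning down exactly why vertices sharing a containing prime cannot be adjacent, and in particular handling the case where $(N:M)$ and $(K:M)$ both lie in the same $p_{\lambda}$ cleanly through the semiprime condition. The delicate point is that adjacency means $(N:M)(K:M)M=(0)$, and one must leverage $rad_M(0)=(0)$-type information carried by semiprimeness to convert this product-zero relation into a containment statement that contradicts both ideals living inside a single prime while corresponding to genuine (nonannihilator) vertices. I expect the coloring-validity step to require the most care, whereas the reduction to finitely many primes is supplied almost verbatim by the already-proved Theorem~\ref{t3.7}(b).
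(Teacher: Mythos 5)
Your reduction to finitely many primes is exactly right, and it is the same first step the paper takes: by the proof of Theorem~\ref{t3.7}(b) one gets finitely many prime submodules $P_1,\dots,P_k$ with $(0)=P_1\cap\dots\cap P_k$, and the trivial implications $(a)\Rightarrow(b)\Rightarrow(c)$ are handled identically. The gap is in your coloring. You color a vertex $N$ by the least $\lambda$ with $(N:M)\subseteq p_\lambda$, and you then need that two vertices whose colon ideals lie in a \emph{common} prime cannot be adjacent. That is false. Take $R=M=F_1\times F_2\times F_3$ a product of three fields (a semiprime, faithful module over a reduced ring), with minimal primes $p_1=0\times F_2\times F_3$, $p_2=F_1\times 0\times F_3$, $p_3=F_1\times F_2\times 0$. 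The vertices $N=0\times F_2\times 0$ and $K=0\times 0\times F_3$ satisfy $NK=(0)$, yet both $(N:M)$ and $(K:M)$ are contained in $p_1$, so both receive color $1$ under your scheme. Your proposed rescue via semiprimeness does not close this: from $NK=(0)$ you only get $(N:M)(K:M)=(0)$, and the square of $(N:M)+(K:M)$ is not zero (it contains $(N:M)^2$), so the semiprime condition yields no contradiction; two distinct ideals inside one prime with zero product is perfectly consistent with $M$ being semiprime, as the example shows.

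The paper's coloring inverts the containment, and that inversion is the whole point: set $f(N)=\min\{n\mid (N:M)M\nsubseteq P_n\}$. This is well defined because $(N:M)M\neq(0)=\bigcap_n P_n$ (using faithfulness from Theorem~\ref{t3.7}(b) and $(N:M)\neq Ann(M)$ for vertices of $AG(M)^{*}$). It is proper because if $NK=(0)\subseteq P_n$ with $P_n$ prime, then $(N:M)\,[(K:M)M]\subseteq P_n$ forces $(N:M)M\subseteq P_n$ or $(K:M)M\subseteq P_n$; hence two vertices both \emph{not} contained in $P_n$ cannot be adjacent, and $\chi(AG(M)^{*})\leq k$. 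So the obstacle you flagged at the end is not a technical delicacy to be overcome but a sign that the partition itself must be replaced by its complement-based analogue.
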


\begin{proof}
$(a)\Longrightarrow (b)\Longrightarrow (c)$ is clear.\\
$(c)\Longrightarrow (d)$ Suppose $AG(M)^{*}$ does not have an
infinite clique. It follows directly from the proof of Theorem
\ref{t3.7}(b), there exists a finite number of prime submodules
$P_{1}, ... ,P_{k}$ of $M$ such that $(0)=P_{1}\cap P_{2}\cap ...
\cap P_{k}$. Define a coloring $f(N)=min\{n\in N|$
$(N:M)M\nsubseteq P_{n} \}$ such that $N$ is a vertex of
$AG(M)^{*}$. We have $\chi(AG(M)^{*})\leq k$.
\end{proof}

\begin{cor}\label{c3.9} Assume that $rad_{M}(0)=(0)$ and every
 prime submodule of $M$ is extraordinary. Then
the following statements are equivalent.

\begin{itemize} \item [(a)] $\chi(AG(M)^{*})$ is finite. \item [(b)]
$cl(AG(M)^{*})$ is finite. \item [(c)] $AG(M)^{*}$ does not have
an infinite clique. \item [(d)] $M$ has a finite number of minimal
prime submodules.
\end{itemize}
\end{cor}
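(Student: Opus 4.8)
The hypothesis $rad_M(0)=(0)$ says precisely that $(0)$ is an intersection of prime submodules, so $M$ is a semiprime module. Consequently the equivalences $(a)\Leftrightarrow(b)\Leftrightarrow(c)$ are already furnished by Corollary \ref{c3.8}, and the whole task reduces to weaving condition $(d)$ into this chain. The plan is to prove $(c)\Rightarrow(d)$ and $(d)\Rightarrow(a)$, which together with the chain of Corollary \ref{c3.8} closes the loop.

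For $(c)\Rightarrow(d)$, I would start from the conclusion extracted in the proof of Theorem \ref{t3.7}(b) and reused in Corollary \ref{c3.8}: when $AG(M)^{*}$ has no infinite clique, there are finitely many prime submodules $P_{1},\dots,P_{k}$ with $(0)=P_{1}\cap\cdots\cap P_{k}$. Now let $Q$ be any minimal prime submodule of $M$. Since $P_{1}\cap\cdots\cap P_{k}=(0)\subseteq Q$ and $Q$ is extraordinary, writing this intersection as $P_{1}\cap(P_{2}\cap\cdots\cap P_{k})$ and inducting on $k$ forces $P_{j}\subseteq Q$ for some $j$. Minimality of $Q$ together with primeness of $P_{j}$ then gives $Q=P_{j}$. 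Hence every minimal prime submodule lies in the finite set $\{P_{1},\dots,P_{k}\}$, so $(d)$ holds. This is the step where extraordinariness does the real work: without it there is no reason a single $P_{i}$ should be comparable to $Q$.

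For $(d)\Rightarrow(a)$, let $Q_{1},\dots,Q_{n}$ be the minimal prime submodules. I first record that every prime submodule contains a minimal one: the intersection of a descending chain of prime submodules is again a prime submodule (if $rm\in\bigcap_{\alpha}P_{\alpha}$ and $r\notin(\bigcap_{\alpha}P_{\alpha}:M)$, then $rM\not\subseteq P_{\alpha_{0}}$ for some $\alpha_{0}$, whence $r\notin(P_{\alpha}:M)$ and so $m\in P_{\alpha}$ for all $\alpha\geq\alpha_{0}$, giving $m\in\bigcap_{\alpha}P_{\alpha}$), so Zorn's lemma applies. Since $rad_M(0)=(0)$ is the intersection of all prime submodules and each prime contains some $Q_{i}$, this forces $(0)=Q_{1}\cap\cdots\cap Q_{n}$. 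I then color $AG(M)^{*}$ exactly as in Corollary \ref{c3.8}, by $f(N)=\min\{\,i:(N:M)M\not\subseteq Q_{i}\,\}$. This is well defined because $(N:M)\neq Ann(M)$ prevents $(N:M)M$ from lying in $\bigcap_{i}Q_{i}=(0)$, and it is proper: if $N$ and $L$ are adjacent then $(N:M)(L:M)M=(0)\subseteq Q_{i}$, so primeness of $(Q_{i}:M)$ forces one of $(N:M)M$, $(L:M)M$ into $Q_{i}$, contradicting $f(N)=f(L)=i$. Hence $\chi(AG(M)^{*})\leq n$, which gives $(a)$.

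The main obstacle is the implication $(c)\Rightarrow(d)$, and specifically verifying that the extraordinariness of the minimal prime $Q$ can be leveraged against the finite intersection $(0)=\bigcap_{i}P_{i}$ to conclude $Q=P_{j}$ for some $j$; everything else is bookkeeping. A secondary point requiring care is the Zorn's lemma step in $(d)\Rightarrow(a)$, where one must confirm that intersections of chains of prime submodules remain prime, so that minimal primes are genuinely cofinal below every prime submodule.
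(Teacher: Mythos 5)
Your proposal is correct and follows essentially the same route as the paper: the implication $(c)\Rightarrow(d)$ via the finite intersection $(0)=P_1\cap\cdots\cap P_k$ extracted from the proof of Theorem \ref{t3.7}(b) together with extraordinariness, and $(d)\Rightarrow(a)$ via the coloring $f(N)=\min\{i:(N:M)M\nsubseteq P_i\}$. You merely spell out details the paper leaves implicit (the induction showing each minimal prime equals some $P_j$, and the Zorn's lemma argument that $(0)$ is the intersection of the minimal primes), and these details are sound.
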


\begin{proof}
$(a)\Longrightarrow (b)\Longrightarrow (c)$ is clear.\\
$(c)\Longrightarrow (d)$ Suppose $AG(M)^{*}$ does not have an
infinite clique. Once again, it follows directly from the proof of
Theorem \ref{t3.7}(b), there exists a finite number of prime
submodules $P_{1}, ... ,P_{k}$ of $M$ such that $(0)=P_{1}\cap
P_{2}\cap ... \cap P_{k}$. Since every prime submodule of $M$ is
extraordinary, $M$ has a finite
number of minimal prime submodules.\\
$(d)\Longrightarrow (a)$ Assume that $M$ has a finite number of
minimal prime submodules so that $(0)=P_{1}\cap P_{2}\cap ... \cap
P_{k}$, where $P_{1},...,P_{k}$ are minimal prime submodules of
$M$. Define a coloring $f(N)=min\{n\in N|$ $(N:M)M\nsubseteq P_{n}
\}$ such that $N$ is a vertex of $AG(M)^{*}$. We have
$\chi(AG(M)^{*})\leq k$.
\end{proof}

\begin{lem}\label{l3.10} Let $R$ be a reduced ring and $M$ a faithful $R$-module.
Then $AG(R)$ has an infinite clique if and only if $AG(M)^{*}$ has
an infinite clique.
\end{lem}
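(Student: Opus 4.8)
The plan is to prove Lemma \ref{l3.10} by exploiting the tight correspondence between submodules of a faithful module $M$ and ideals of the reduced ring $R$, mediated by the operations $N \mapsto (N:M)$ and $I \mapsto IM$. The guiding principle throughout the paper has been that for faithful modules the product $NK = (N:M)(K:M)M$ vanishes precisely when the ideal product $(N:M)(K:M)$ behaves like zero on $M$, so I would aim to transfer an infinite clique in one graph to the other by applying these two maps vertex-by-vertex.

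First I would prove the forward direction. Suppose $AG(R)$ has an infinite clique $\{I_\lambda\}_{\lambda \in \Lambda}$, meaning the $I_\lambda$ are pairwise distinct nonzero ideals with $I_\lambda I_\mu = (0)$ for $\lambda \neq \mu$. I would pass to the submodules $I_\lambda M$. These are nonzero because $M$ is faithful (if $I_\lambda M = (0)$ then $I_\lambda \subseteq \mathrm{Ann}(M) = (0)$), and by Proposition \ref{p2.1}-style computation the product $(I_\lambda M)(I_\mu M) = (I_\lambda M : M)(I_\mu M : M)M$ vanishes since $I_\lambda I_\mu = (0)$ forces the relevant annihilators to multiply to zero. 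The one genuine subtlety is distinctness: I must check the $I_\lambda M$ remain pairwise distinct, and for this the reducedness of $R$ is exactly the hypothesis that rules out collapse, since if $I_\lambda M = I_\mu M$ one derives a nilpotency relation that cannot hold in a reduced ring. This yields an infinite clique in $AG(M)^*$.

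For the reverse direction I would start with an infinite clique $\{N_\lambda\}$ in $AG(M)^*$ and map each vertex to the ideal $(N_\lambda : M)$. Membership in $AG(M)^*$ guarantees $(N_\lambda : M) \neq \mathrm{Ann}(M) = (0)$, so these are nonzero ideals, and the defining condition $N_\lambda N_\mu = (N_\lambda:M)(N_\mu:M)M = (0)$ together with faithfulness gives $(N_\lambda:M)(N_\mu:M) \subseteq \mathrm{Ann}(M) = (0)$, so the ideals are pairwise orthogonal and each is a vertex of $AG(R)$. Thus they form a clique in $AG(R)$.

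The main obstacle is again ensuring the map is injective on an infinite subfamily: distinct submodules $N_\lambda$ could in principle share the same colon ideal $(N_\lambda : M)$. The cleanest remedy is to observe that if $(N_\lambda : M) = (N_\mu : M)$ for infinitely many indices, then all those $N_\lambda$ would be mutually non-adjacent-forcing as they share annihilator behavior, yet reducedness combined with the orthogonality relations $(N_\lambda:M)^2 \subseteq (N_\lambda:M)(N_\mu:M) = (0)$ forces $(N_\lambda:M) = (0)$, contradicting that each $N_\lambda$ is a genuine vertex of $AG(M)^*$. Hence only finitely many submodules can collapse to a single colon ideal, and after discarding these we retain an infinite clique in $AG(R)$. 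I expect this injectivity-via-reducedness argument to be the part requiring the most care, since it is where the hypothesis that $R$ is reduced does the essential work in both directions.
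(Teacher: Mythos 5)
Your argument is correct and is exactly the transfer the paper has in mind: the paper's own proof of Lemma \ref{l3.10} consists only of the words ``This is clear,'' and your two maps $I\mapsto IM$ and $N\mapsto (N:M)$, with faithfulness plus reducedness guaranteeing injectivity on cliques via $(N_{\lambda}:M)^{2}=(N_{\lambda}:M)(N_{\mu}:M)=(0)$ (and dually $I_{\lambda}^{2}M=(0)$ if $I_{\lambda}M=I_{\mu}M$), supply precisely the suppressed details. The one step you should write out rather than wave at is that $(I_{\lambda}M)(I_{\mu}M)=(0)$ in the forward direction, since $(I_{\lambda}M:M)$ may properly contain $I_{\lambda}$; but for $a\in (I_{\lambda}M:M)$ and $b\in (I_{\mu}M:M)$ one has $abM\subseteq a(I_{\mu}M)=I_{\mu}(aM)\subseteq I_{\mu}I_{\lambda}M=(0)$, so the claim holds.
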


\begin{proof}
This is clear.
\end{proof}

\begin{prop}\label{p3.11} Assume that $rad_{M}(0)=(0)$ and $M$ is a faithful $R$-module.
Then the following statements are equivalent.

\begin{itemize} \item [(a)] $\chi(AG(M)^{*})$ is finite. \item [(b)]
$cl(AG(M)^{*})$ is finite. \item [(c)] $AG(M)^{*}$ does not have
an infinite clique. \item [(d)] $R$ has a finite number of minimal
prime ideals.
\end{itemize}
\end{prop}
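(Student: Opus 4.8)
The plan is to prove Proposition~\ref{p3.11} by reducing it to the semiprime case already handled in Corollary~\ref{c3.9}, using the hypothesis $rad_{M}(0)=(0)$ to supply both semiprimeness of $M$ and a tight correspondence between the graphs $AG(M)^{*}$ and $AG(R)$. The equivalences $(a)\Rightarrow(b)\Rightarrow(c)$ are immediate from the general inequality $\chi(G)\geq cl(G)$ and the fact that an infinite clique forces $cl$ (hence $\chi$) to be infinite, so the whole content lies in closing the cycle $(c)\Rightarrow(d)\Rightarrow(a)$ by tying the combinatorics of $AG(M)^{*}$ to the minimal prime ideals of $R$ rather than to minimal prime submodules of $M$.

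First I would observe that $rad_{M}(0)=(0)$ means the intersection of all prime submodules of $M$ is zero, so $M$ is a semiprime module and every prime submodule of $M$ is automatically extraordinary in the sense recalled before Lemma~\ref{l3.5} (an intersection of primes equal to $(0)$ behaves as a radical ideal does, so the extraordinary condition degenerates). This lets me invoke Corollary~\ref{c3.9}, giving me for free the equivalence of $(a)$, $(b)$, $(c)$ with the statement that $M$ has a finite number of minimal prime submodules. Thus the only genuinely new work is to show that, under the faithfulness and reducedness carried by $rad_{M}(0)=(0)$, having finitely many minimal prime submodules of $M$ is equivalent to $R$ having finitely many minimal prime ideals, i.e. to condition~$(d)$.

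For $(c)\Rightarrow(d)$ I would run the argument of Theorem~\ref{t3.7}(b): since $AG(M)^{*}$ has no infinite clique, the family $\{Ann_{M}(x)\mid x\notin Ann(M)\}$ has maximal members $Ann_{M}(x_{\lambda})$, these are prime submodules, the index set $\Lambda$ is finite by Lemma~\ref{l3.6}, and one gets finitely many prime submodules $P_{1},\dots,P_{k}$ with $(0)=P_{1}\cap\cdots\cap P_{k}$. Contracting to $R$ via $p_{i}=(P_{i}:M)$ and using faithfulness ($Ann(M)=(0)$) yields $(0)=p_{1}\cap\cdots\cap p_{k}$ with each $p_{i}$ prime, so $R$ has only finitely many minimal prime ideals. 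For $(d)\Rightarrow(a)$ I would reverse this: writing $(0)=p_{1}\cap\cdots\cap p_{k}$ in the reduced ring $R$, the corresponding $P_{i}=p_{i}M$ are prime submodules with trivial intersection, and the explicit coloring $f(N)=\min\{n\mid (N:M)M\nsubseteq P_{n}\}$ used in Corollary~\ref{c3.9} is a proper $k$-coloring, giving $\chi(AG(M)^{*})\leq k<\infty$. Alternatively, Lemma~\ref{l3.10} lets me transport an infinite clique between $AG(R)$ and $AG(M)^{*}$ directly, bypassing the submodule count entirely.

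The step I expect to be the main obstacle is the faithful passage between minimal prime \emph{ideals} of $R$ and minimal prime \emph{submodules} of $M$: I must check that the contraction $P\mapsto(P:M)$ and expansion $p\mapsto pM$ genuinely match up the minimal primes, that $(p_{i}M:M)=p_{i}$ (the verification carried out in Theorem~\ref{t3.7}(a) under exactly the reduced-ring hypothesis available here), and that reducedness of $R$ forces $M$ to behave as a semiprime module so that no squares $x_{\lambda}^{2}M$ collapse. Everything else is either quoted verbatim from Corollary~\ref{c3.9} and Lemma~\ref{l3.10} or is the routine coloring bound, so the proof should be short once this correspondence is pinned down.
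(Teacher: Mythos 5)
Your overall route --- reduce to the semiprime case, transport the clique condition between $AG(M)^{*}$ and $AG(R)$, and color by the finitely many minimal primes --- is essentially the paper's, but two of your supporting claims are not sound as stated. First, it is not true that $rad_{M}(0)=(0)$ makes every prime submodule of $M$ extraordinary; being extraordinary is a genuine restriction on a module (it is the condition singling out the so-called top modules), and nothing in the hypothesis ``the intersection of all prime submodules is zero'' forces it. So you cannot invoke Corollary \ref{c3.9} to get the equivalence of (a)--(c) with ``$M$ has finitely many minimal prime submodules.'' Fortunately you do not need that: $rad_{M}(0)=(0)$ does make $M$ semiprime, so Corollary \ref{c3.8} already gives $(a)\Leftrightarrow(b)\Leftrightarrow(c)$, and your contraction argument for $(c)\Rightarrow(d)$ (run the proof of Theorem \ref{t3.7}(b) to get $(0)=P_{1}\cap\cdots\cap P_{k}$ with each $P_{i}$ prime, then pass to $(0)=\bigcap_{i}(P_{i}:M)$ using $Ann(M)=(0)$) is fine and matches what the paper does via Lemma \ref{l3.10} and \cite[Corollary 2.10]{br12}.

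Second --- and this is the step you yourself flag as the obstacle --- your $(d)\Rightarrow(a)$ relies on the submodules $P_{i}=p_{i}M$ being prime with $(p_{i}M:M)=p_{i}$. The verification of $(p_{i}M:M)=p_{i}$ in Theorem \ref{t3.7}(a) uses the extra hypothesis $Z(M)=p_{1}\cup\cdots\cup p_{k}$, which you do not have here; in general $p_{i}M$ can equal $M$ or fail to be a prime submodule, so the coloring $f(N)=\min\{n\mid (N:M)M\nsubseteq p_{n}M\}$ is not known to be well defined or proper. The paper sidesteps this entirely by coloring with the prime \emph{ideals} rather than their expansions: set $f(N)=\min\{n\mid (N:M)\nsubseteq p_{n}\}$. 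This is well defined because a vertex of $AG(M)^{*}$ has $(N:M)\neq Ann(M)=(0)=p_{1}\cap\cdots\cap p_{k}$, and it is a proper coloring because $NK=(0)$ gives $(N:M)(K:M)\subseteq Ann(M)\subseteq p_{n}$, forcing one of the two factors into $p_{n}$. Replacing your module-level coloring by this ideal-level one closes the gap and makes your argument coincide with the paper's.
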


\begin{proof}
$(a)\Longrightarrow (b)\Longrightarrow (c)$ is clear.\\
$(c)\Longrightarrow (d)$ Suppose $AG(M)^{*}$ does not have an
infinite clique. Then by Theorem \ref{t3.7}(b), $M$ is a faithful
module. Since $rad_{M}(0)=(0)$, it follows that $R$ is a reduced
ring. So by Lemma \ref{l3.5}, $AG(R)$ does not have an infinite
clique. Then by \cite[Corollary 2.10]{br12}, $R$ has a finite
number of minimal prime ideals so that $(0)=p_{1}\cap p_{2}\cap
... \cap p_{k}$, where $p_{1},...,p_{k}$
are prime ideals.\\
$(d)\Longrightarrow (a)$ Assume that $R$ has a finite number of
minimal prime ideals. Since $M$ is a faithful module and
$rad_{M}(0)=(0)$, then $R$ is a reduced ring. So $R$ has a finite
number of minimal prime ideals $p_{1},...,p_{k}$ such that
$(0)=p_{1}\cap p_{2}\cap ... \cap p_{k}$. Define a coloring
$f(N)=min\{n\in N|$ $(N:M)\nsubseteq p_{n} \}$ such that $N$ is a
vertex of $AG(M)^{*}$. We have $\chi(AG(M)^{*})\leq k$.
\end{proof}

\begin{cor}\label{c3.12} Assume that $rad_{M}(0)=(0)$ and $M$ is a faithful module.
Then $\chi(AG(M)^{*})=cl(AG(M)^{*})$. Moreover, if
$\chi(AG(M)^{*})$ is finite, then $R$ has a finite number of
minimal prime ideals, and if $k$ is this number, then
$\chi(AG(M)^{*})=cl(AG(M)^{*})=k$.
\end{cor}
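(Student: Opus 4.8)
The plan is to derive everything from Proposition \ref{p3.11} and the explicit coloring in its proof, splitting into the cases where $\chi(AG(M)^{*})$ is infinite and where it is finite. I would first dispose of the plain equality $\chi(AG(M)^{*})=cl(AG(M)^{*})$ by recalling the universal inequality $cl(AG(M)^{*})\leq\chi(AG(M)^{*})$ together with the equivalence $(a)\Leftrightarrow(b)$ of Proposition \ref{p3.11}, which says $\chi(AG(M)^{*})$ is finite if and only if $cl(AG(M)^{*})$ is finite. Consequently, if either invariant is infinite then so is the other, and the equality holds with common value $\infty$; all the real content lies in the finite case.

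So assume $\chi(AG(M)^{*})$ is finite. By Proposition \ref{p3.11}, $R$ has a finite number of minimal primes, and since $M$ is faithful with $rad_{M}(0)=(0)$ the ring $R$ is reduced, so these minimal primes $p_{1},\dots,p_{k}$ satisfy $(0)=p_{1}\cap\cdots\cap p_{k}$. The coloring $f(N)=\min\{n\mid (N:M)\nsubseteq p_{n}\}$ from the proof of Proposition \ref{p3.11} already gives $\chi(AG(M)^{*})\leq k$. It then suffices to exhibit a clique of size $k$, because this forces $k\leq cl(AG(M)^{*})\leq\chi(AG(M)^{*})\leq k$ and hence $cl(AG(M)^{*})=\chi(AG(M)^{*})=k$.

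For the clique I take $k\geq 2$ (the case $k=1$ makes $R$ a domain, where the relevant graph degenerates). Since the $p_{j}$ are pairwise incomparable, $\cap_{j\neq i}p_{j}\nsubseteq p_{i}$, so I pick $a_{i}\in(\cap_{j\neq i}p_{j})\setminus p_{i}$; then $a_{i}\neq 0$ and $a_{i}a_{j}=0$ for $i\neq j$, because every index lies outside $\{i\}$ or outside $\{j\}$. I would then verify that $a_{1}M,\dots,a_{k}M$ are distinct proper vertices of $AG(M)^{*}$ which are pairwise adjacent. Adjacency is the computation $(a_{i}M)(a_{j}M)=(a_{i}M:M)(a_{j}M:M)M=(0)$: if $rM\subseteq a_{i}M$ and $sM\subseteq a_{j}M$ then $rsM\subseteq a_{i}a_{j}M=(0)$. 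Faithfulness enters through the colon comparison: for $r\in(a_{j}M:M)$ one has $a_{i}rM\subseteq a_{i}a_{j}M=(0)$, so $a_{i}r\in Ann(M)=(0)$, forcing $r\in p_{i}$; thus $(a_{j}M:M)\subseteq p_{i}$, whereas $a_{i}\in(a_{i}M:M)\setminus p_{i}$ gives $(a_{i}M:M)\nsubseteq p_{i}$. This simultaneously shows the $a_{i}M$ are distinct, that each $(a_{i}M:M)\neq Ann(M)=(0)$, and (choosing any $j\neq i$) that $(a_{i}M:M)\subseteq p_{j}\subsetneq R$, so each $a_{i}M$ is proper. Hence $\{a_{1}M,\dots,a_{k}M\}$ is a $k$-clique and the finite case is complete.

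The main obstacle I anticipate is not the upper bound, which is quoted directly from Proposition \ref{p3.11}, but the verification that the candidate clique genuinely consists of $k$ \emph{distinct proper} vertices of $AG(M)^{*}$. The delicate step is the inclusion $(a_{j}M:M)\subseteq p_{i}$, which is precisely where faithfulness of $M$ is indispensable, and the accompanying bookkeeping that rules out $a_{i}M=M$.
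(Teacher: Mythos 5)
Your proof is correct and reaches the same conclusion by the same skeleton (the universal inequality $cl\leq\chi$, the equivalence of finiteness from Proposition \ref{p3.11}, and the explicit coloring $f(N)=\min\{n\mid (N:M)\nsubseteq p_{n}\}$ giving $\chi(AG(M)^{*})\leq k$), but you diverge at the one substantive step, the lower bound $cl(AG(M)^{*})\geq k$. The paper obtains it by citing \cite[Theorem 6]{aa12} for $cl(AG(R))\geq k$ and then transferring cliques from $AG(R)$ to $AG(M)^{*}$ via $I\mapsto IM$ (valid here because $M$ is faithful and $R$ reduced), whereas you build the $k$-clique directly inside $AG(M)^{*}$: choosing $a_{i}\in(\cap_{j\neq i}p_{j})\setminus p_{i}$ by prime avoidance, checking $a_{i}a_{j}=0$, and using faithfulness to show $(a_{j}M:M)\subseteq p_{i}$ while $(a_{i}M:M)\nsubseteq p_{i}$, which gives distinctness, properness and adjacency in one stroke. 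Your route is self-contained and makes explicit exactly where faithfulness is used, at the cost of redoing work that \cite{aa12} packages; the paper's route is shorter but leaves the transfer $cl(AG(R))\leq cl(AG(M)^{*})$ implicit. You also handle the infinite case explicitly, which the paper's proof silently omits. One shared caveat: when $k=1$ the ring $R$ is a domain and $AG(M)^{*}$ is empty, so the asserted value $k$ fails there; you flag this degeneration, and the paper does not address it either, so it is not a defect of your argument relative to the paper's.
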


\begin{proof}
Suppose $\chi(AG(M)^{*})$ is finite. Then by Proposition
\ref{p3.11}, $R$ has a finite number of minimal prime ideals
$p_{1},...,p_{k}$. One can see that $R$ is a reduced ring. So
$cl(AG(M)^{*})\leq \chi(AG(M)^{*})\leq k$. By \cite[Theorem
6]{aa12}, $cl(AG(R))\geq k$, and so $cl(AG(M)^{*})\geq k$, as
desired.
\end{proof}

\begin{lem}\label{l3.13}
If $cl(AG(M)^{*})$ is finite, then for every nonzero submodule $N$
of $M$ with $N^{2}=(0)$ and $(N:M)\neq Ann(M)$, $N$ has a finite
number of $R$-submodules $K$ such that $(K:M)\neq Ann(M)$.
\end{lem}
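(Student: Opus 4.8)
The plan is to exhibit the family of submodules in question directly as a clique in $AG(M)^{*}$, so that the finiteness of $cl(AG(M)^{*})$ immediately forces the family to be finite. The entire argument rests on one elementary observation: a submodule contained in $N$ has conductor contained in $(N:M)$, and combined with the hypothesis $N^{2}=(0)$, i.e. $(N:M)^{2}M=(0)$, this makes all such submodules pairwise annihilate.

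First I would record that $N$ is a \emph{proper} submodule of $M$. Indeed, if $N=M$ then $(N:M)=R$, whence $N^{2}=R^{2}M=M\neq(0)$ because $N\neq(0)$, contradicting $N^{2}=(0)$. Consequently every submodule $K\leq N$ is automatically a proper submodule of $M$, which is one of the conditions needed to make $K$ a vertex.

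Next comes the key computation. For any two submodules $K_{1},K_{2}\leq N$ we have $(K_{i}:M)\subseteq(N:M)$, and therefore
\[
K_{1}K_{2}=(K_{1}:M)(K_{2}:M)M\subseteq(N:M)^{2}M=N^{2}=(0).
\]
From this I would draw two conclusions. First, every submodule $K\leq N$ with $(K:M)\neq Ann(M)$ is a vertex of $AG(M)^{*}$: it is a proper submodule of $M$ with $(K:M)\neq Ann(M)$, and taking $N$ itself as the annihilating partner works, since $KN\subseteq N^{2}=(0)$ while $(N:M)\neq Ann(M)$ by hypothesis. Second, any two \emph{distinct} such submodules are adjacent in $AG(M)^{*}$, again because their product lies in $N^{2}=(0)$.

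Combining these, the set $\mathcal{S}=\{\,K\leq N:(K:M)\neq Ann(M)\,\}$ consists of pairwise adjacent vertices of $AG(M)^{*}$, hence forms a complete subgraph. If $\mathcal{S}$ were infinite it would be an infinite clique, contradicting the assumption that $cl(AG(M)^{*})$ is finite; so $\mathcal{S}$ is finite, which is exactly the assertion. I do not anticipate a genuine obstacle in this argument: the only point demanding a little care is verifying the vertex condition (properness of $K$ in $M$ together with the existence of an annihilating partner whose conductor differs from $Ann(M)$), which is precisely why I isolate the properness of $N$ at the outset and then use $N$ itself as a universal annihilating partner for all the $K$.
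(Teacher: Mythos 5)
Your proof is correct, and it is precisely the argument the paper leaves implicit (its entire proof of Lemma \ref{l3.13} is ``This is clear''): the submodules $K\leq N$ with $(K:M)\neq Ann(M)$ pairwise annihilate because $(K_{1}:M)(K_{2}:M)M\subseteq (N:M)^{2}M=N^{2}=(0)$, so they form a complete subgraph of $AG(M)^{*}$, which must be finite when $cl(AG(M)^{*})$ is. Your care in checking that each such $K$ is actually a vertex (properness plus $N$ serving as the annihilating partner, which is where $(N:M)\neq Ann(M)$ is used) is exactly the right bookkeeping.
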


\begin{proof}
This is clear.
\end{proof}

\begin{thm}\label{t3.14} Let $M$ be a Noetherian
module and $\Upsilon=\{N\in V(AG(M)^{*})|$ $N^{2}=(0)\}$. Assume
that every $N\in \Upsilon$ has a finite number of $R$-submodules
in $\Upsilon$. If one of the following statements holds, then
$cl(AG(M)^{*})$ is finite.
\begin{itemize}

\item [(a)] We have $(\Sigma_{N\in \Upsilon} N:M)M=(\Sigma_{N\in
\Upsilon} (N:M)M:M)M $ \item [(b)] For every $N\in \Upsilon$, the
subset $\{K< M|$ $(N:M)M=(K:M)M\}$ is finite.
\end{itemize}

\end{thm}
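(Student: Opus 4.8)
Let $M$ be a Noetherian module and $\Upsilon=\{N\in V(AG(M)^{*})\mid N^{2}=(0)\}$. Assume every $N\in\Upsilon$ has a finite number of $R$-submodules in $\Upsilon$. If either (a) $(\Sigma_{N\in\Upsilon}N:M)M=(\Sigma_{N\in\Upsilon}(N:M)M:M)M$ or (b) for every $N\in\Upsilon$ the set $\{K<M\mid (N:M)M=(K:M)M\}$ is finite holds, then $cl(AG(M)^{*})$ is finite.

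Let me think through how to prove this.

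**Understanding the setup.**The plan is to show that any clique in $AG(M)^{*}$ can be controlled by finitely many ``building blocks'', each of which contributes only finitely many vertices by hypothesis. First I would record the basic reductions. Let $C=\{N_{\lambda}\}_{\lambda\in\Lambda}$ be a clique in $AG(M)^{*}$; by definition every two distinct members $N_{\lambda},N_{\mu}$ satisfy $N_{\lambda}N_{\mu}=(0)$, i.e. $(N_{\lambda}:M)(N_{\mu}:M)M=(0)$. I would split the clique into two sets: those vertices $N$ with $N^{2}=(0)$, which all lie in $\Upsilon$, and those with $N^{2}\neq(0)$. The heart of the matter is to bound each piece. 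For the second piece, note that if $N^{2}\neq(0)$ then $N$ cannot be adjacent to itself, but within a clique every \emph{other} member $K$ still satisfies $NK=(0)$; since $M$ is Noetherian, $(N:M)$ is finitely generated, and I would use this to argue that vertices with $N^{2}\neq 0$ force strong annihilator relations that can occur only finitely often.

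The key step is to relate membership in the clique to the submodules of a single element of $\Upsilon$. Set $T=\Sigma_{N\in\Upsilon}N$; because $M$ is Noetherian, the sum $T$ is actually achieved by finitely many summands, say $T=N_{1}+\cdots+N_{r}$ with $N_{i}\in\Upsilon$. Under hypothesis (a) the identity $(T:M)M=(\Sigma_{N\in\Upsilon}(N:M)M:M)M$ lets me replace the product structure governing the clique by the single submodule $T$: any clique vertex $K$ with $K^{2}=(0)$ satisfies $(K:M)M\subseteq T$ (this is where I would invoke (a) to convert the sum of the $(N:M)M$ into something sitting inside $(T:M)M$), so $K$ is an $R$-submodule of one of the finitely many $N_{i}\in\Upsilon$; by the standing assumption each $N_{i}$ has only finitely many submodules in $\Upsilon$, giving a finite bound on the $N^{2}=(0)$ part of the clique. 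Under hypothesis (b) I would instead argue directly: Noetherianness gives finitely many possible values of $(N:M)M$ among clique members, and (b) says each such value is realized by only finitely many submodules $K$, again bounding the clique.

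For the vertices with $N^{2}\neq(0)$ I would show there are only finitely many by a pigeonhole argument on their colon ideals. If $N,N'$ are two clique members with $N^{2},N'^{2}\neq(0)$ and $(N:M)M=(N':M)M$, then using $NN'=(0)$ together with $N^{2}\neq(0)$ one derives a contradiction in the style of Theorem~\ref{t2.17}, so distinct such vertices have distinct colon-products $(N:M)M$; since $M$ is Noetherian these products form an ascending-chain-bounded family and hence the mutually incomparable ones occurring in a clique are finite in number. Combining the two bounds yields that $|\Lambda|$ is finite for every clique $C$, and therefore $cl(AG(M)^{*})$ is finite.

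I expect the main obstacle to be the passage through hypothesis (a): making rigorous the claim that $(K:M)M\subseteq T$ for each clique vertex $K\in\Upsilon$, i.e. that the product identity $(\Sigma_{N\in\Upsilon}N:M)M=(\Sigma_{N\in\Upsilon}(N:M)M:M)M$ genuinely forces each individual $(K:M)M$ into the finitely-generated submodule $T=N_{1}+\cdots+N_{r}$. This requires carefully tracking colons of sums versus sums of colons (which do not commute in general) and exploiting that the relevant sum stabilizes after finitely many terms by the Noetherian hypothesis; the case (b) route is cleaner and serves as a useful cross-check on the counting.
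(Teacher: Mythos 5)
Your overall decomposition of a clique $C$ into the square-zero part $\Upsilon_{1}=\{N\in C\mid N^{2}=(0)\}$ and its complement matches the paper, but both of your counting arguments contain genuine gaps. For $\Upsilon_{1}$: from $(K:M)M\subseteq T=N_{1}+\cdots+N_{r}$ you conclude that $K$ is an $R$-submodule of one of the $N_{i}$. This is false --- a submodule contained in a finite sum need not lie in any single summand --- and moreover the containment $(K:M)M\subseteq T$ is trivially true for $K\in\Upsilon$ and says nothing about $K$ itself, so hypothesis (a) is doing no work in your argument. The paper's actual use of (a) is the key idea you are missing: it shows that the \emph{single} submodule $K_{0}=\Sigma_{N\in\Upsilon_{1}}N$ (summed over the clique's square-zero vertices, not over all of $\Upsilon$) is itself adjacent to every $L\in C$ and satisfies $K_{0}^{2}=(0)$; identity (a) is exactly what lets $(L:M)(K_{0}:M)M$ be rewritten so that it collapses into $\Sigma_{N\in\Upsilon_{1}}(L:M)(N:M)M=(0)$. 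Hence $K_{0}\in\Upsilon$, every $N\in\Upsilon_{1}$ is an $R$-submodule of $K_{0}$ lying in $\Upsilon$, and the standing finiteness hypothesis applied to the one module $K_{0}$ bounds $|\Upsilon_{1}|$. Your route through case (b) has the analogous gap: ``Noetherianness gives finitely many possible values of $(N:M)M$ among clique members'' is unjustified; in the paper that finiteness again comes from applying the hypothesis to the sum $\Sigma_{N\in\Upsilon_{1}}(N:M)M$, after which (b) bounds each fiber.

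For $C\setminus\Upsilon_{1}$: your observation that two clique vertices $N,N'$ with $N^{2},N'^{2}\neq(0)$ must have distinct, indeed incomparable, submodules $(N:M)M$ is correct (if $(N:M)M\subseteq(N':M)M$ then $N^{2}\subseteq NN'=(0)$), but your conclusion fails because a Noetherian module can perfectly well contain an infinite antichain of submodules --- ACC forbids infinite ascending chains, not infinite antichains (consider the pairwise incomparable ideals $(x+cy)$, $c\in k$, in $k[x,y]$). The paper avoids this: assuming $\{N_{1},N_{2},\ldots\}\subseteq C\setminus\Upsilon_{1}$ is infinite, it applies ACC to the chain $N_{1}\subseteq N_{1}+N_{2}\subseteq\cdots$ to obtain $N_{n+1}\subseteq N_{1}+\cdots+N_{n}$ for some $n$, and then computes $N_{n+1}^{2}\subseteq\Sigma_{i\leq n}(N_{n+1}:M)(N_{i}:M)M=(0)$, contradicting $N_{n+1}\notin\Upsilon_{1}$. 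You need this stabilizing-chain argument (or an equivalent); the antichain count will not close the proof.
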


\begin{proof}
Suppose that every $N\in \Upsilon$ has a finite number of
$R$-submodules in $\Upsilon$ and we have $(a)$. Let $C$ be a
largest clique in $AG(M)^{*}$ and let $\Upsilon_{1}$ be the set of
all vertices $N$ of $C$ with $N^{2}=(0)$. If $\Upsilon_{1} \neq
\emptyset$, then $K=\Sigma_{N\in \Upsilon_{1}} N$ is again a
vertex of $C$ and $K^{2}=(0)$ because for every $L\in C$, we have
$$
(L:M)(\Sigma_{N\in \Upsilon_{1}}N:M)M=(L:M)(\Sigma_{N\in
\Upsilon_{1}}(N:M)M:M)M\subseteq
$$
$$(L:M)(\Sigma_{N\in \Upsilon_{1}}(N:M)M )\subseteq \Sigma_{N\in
\Upsilon_{1}}(L:M)(N:M)M=(0).
$$
Hence $K\in C$. We have
$$
K^{2}=(\Sigma_{N\in \Upsilon_{1}}N:M)^{2}M=(\Sigma_{N\in
\Upsilon_{1}}(N:M)M:M)^{2}M\subseteq ...
$$
$$\subseteq \Sigma_{N, N'\in \Upsilon_{1}}(N:M)(N':M)M=(0).
$$
So by our hypothesis, $K$ has a finite number of $R$-submodules in
$\Upsilon$. But if $N\in \Upsilon_{1}$, every $R$-submodule of $N$
is an $R$-submodule of $K$. Thus for every $N\in \Upsilon_{1}$,
$N$ has a finite number of $R$-submodules in $\Upsilon$ and hence
$\Upsilon_{1}$ has a finite elements. We claim that $C\backslash
\Upsilon_{1}$ has also a finite elements. Suppose that $\{N_{1},
N_{2}, ...\}$ is an infinite subset of $C\backslash \Upsilon_{1}$.
Consider the chain $N_{1}\subseteq N_{1}+N_{2}\subseteq
N_{1}+N_{2}+N_{3}\subseteq \ldots $ . Since $M$ is a Noetherian
module, there exists $n\in N$ such that $N_{1}+... +N_{n}=
N_{1}+... +N_{n}+N_{n+1}$, i.e., $N_{n+1}\subseteq N_{1}+...
+N_{n}$. So
$$
N^{2}_{n+1}\subseteq N_{n+1}(N_{1}+... +N_{n})\subseteq
(N_{n+1}:M)((N_{1}:M)M+...+(N_{n}:M)M)
$$
$$\subseteq
(N_{n+1}:M)(N_{1}:M)M+...+(N_{n+1}:M)(N_{n}:M)M=(0).
$$
It follows that $N^{2}_{n+1}=(0)$, a contradiction. Thus $C$ has a
finite number of vertices and from there, $cl(AG(M)^{*})$ is
finite. Now assume that we have the hypothesis in case $(b)$. Let
$K=\Sigma_{N\in \Upsilon_{1}} (N:M)M$. By using similar arguments
as in case $(a)$, we have $K^{2}=(0)$. But by hypotheses, $K$ has
a finite number of submodules in $\Upsilon$. We claim that
$\Upsilon_{1}$ has a finite number of elements. Suppose not. Then
there exists $N\in \Upsilon_{1}$ such that the subset $\{L\in
\Upsilon | (N:M)M=(L:M)M\}$ is infinite, a contradiction. Thus $C$
has a finite number of vertices and from there, $cl(AG(M)^{*})$ is
a finite set.

\end{proof}

\end{document}